\newtheorem{dummy}{dummy}[section]
\newtheorem{Theorem}[dummy]{Theorem}
\newtheorem{proposition}[dummy]{Proposition}
\newtheorem*{Proposition*}{Proposition}
\newtheorem{lemma}[dummy]{Lemma}
\newtheorem{Corollary}[dummy]{Corollary}
\newtheorem{defi}[dummy]{Definition}
\newtheorem{Remark}[dummy]{Remark}
\title{On the quadratic stability of asymmetric Hermite basis with  application to plasma physics with oscillating electric field}
\author{
R. Dai%
\thanks{\url{ruiyang.dai@sorbonne-universite.fr}; Laboratoire Jacques-Louis Lions, Sorbonne Université, 4 Place Jussieu, 75005, Paris, France; Corresponding author}
\ and B. Despr\'es%
\thanks{\url{bruno.despres@sorbonne-universite.fr}; Laboratoire Jacques-Louis Lions, Sorbonne Université, 4 Place Jussieu, 75005, Paris, France}
}
\date{}
\begin{document}
\maketitle

\begin{abstract}
We analyze why the discretization of linear transport with   asymmetric Hermite basis functions  can be  instable in quadratic norm.
The main reason is that the finite truncation of the infinite moment linear system looses the skew-symmetry property with respect to the 
Gram matrix.
Then we propose an original closed formula for the scalar product of any pair of asymmetric basis functions. It makes possible the construction of two simple modifications of the linear systems which recover the skew-symmetry property.
By construction the new methods are quadratically stable with respect to the natural $L^2$ norm. We explain how to generalize to other transport equations
encountered in numerical plasma physics. Basic numerical tests with oscillating electric fields of different nature  illustrate the unconditional stability properties of our algorithms.
\end{abstract}

\section{Introduction}

 Asymmetric Hermite basis are widely used
for the numerical discretization of transport phenomenon in plasma physics, since
they  are amenable for the preservation of natural invariants such as the total mass or the total energy.
However they are not symmetric which, on mathematical grounds,  means that they do not constitute an orthonormal family of the space $L^2(\mathbb R)$ endowed with the scalar product
\begin{equation} \label{eq:sp}
(\alpha, \beta)=\int_\mathbb R \alpha(v) \beta(v)dv, \qquad \alpha,\beta\in L^2(\mathbb R).
\end{equation}
In consequence they can trigger numerical instabilities in numerical methods.
The aim of our work is to explain how to recover the hidden quadratic stability  	of  asymmetric Hermite basis, which turns into new and stable numerical methods.
Mathematically this is based on exact formulas for the calculation of the scalar product of two asymmetric functions. To the best of our knowledge, these
 formulas are original with respect to the huge literature on special functions \cite{nist,magnus}.

In plasma physics  literature  \cite{armstrong,gaje,holloway,klimas,koshka,parker, kormann2021,manzini,schumer,charli}, the theory of    Hermite basis  is  often motivated by  the development of plasma numerical  simulators.
The convergence of the Hermite-Fourier method with symmetric functions is provided in \cite{manzini,funaro2021,bourdiec}: unfortunately the theory is difficult to  extend to the 
asymmetric case.
The seminal reference is Holloway who discussed these two cases in \cite{holloway}.
When employing symmetrically-weighted basis functions, 
particle number, mass and total energy are preserved for odd number of moments, 
or momentum are preserved for even number of moments.  
On the other hand, asymmetrically-weighted basis functions, 
conserves particle number, total energy and momentum simultaneously.
In \cite{kormann2021}, K. Kormann and A. Yurova, 
using the idea of telescoping sums to show conservation 
for the symmetrically-weighted and asymmetrically-weighted cases, 
reached conclusions consistent with those in \cite{holloway}.
This is the reason why researchers are more interested 
in asymmetrically-weighted basis functions.

Subsequent research \cite{schumer} by Schumer and  Holloway 
conducted  comprehensive numerical simulations of both symmetric and asymmetric  methods, 
revealing that the asymmetrically-weighted Hermite method became numerically unstable. 
On the contrary, the symmetrically-weighted method 
is more  robust and suitable for long-time simulations.
Among recent  works which explicitly mention  the instability of asymmetric basis, we quote  \cite{manzini,bessemoulin} where the first contribution relies on adding a Fokker-Planck perturbation
to enforce stability
while  the second contribution details  a weighted norm which changes dynamically in time (its net effect is that the underlying reference temperature increases in time).
  Funaro and   Manzini provide in  \cite{funaro2021}  a mathematical investigation of  the stability of the Hermite-Fourier spectral approximation  of the Vlasov-Poisson model for a collisionless plasma in the electrostatic limit. The analysis includes high-order artificial collision operators of Lenard-Bernstein type.
In \cite{pagliantini2023}, the  authors proposed a spectral method 
for the 1D-1V Vlasov-Poisson system where the discretization in velocity space is based on asymmetrically-weighted Hermite functions, dynamically adapted through two velocity variables, which aims to maintain the stability of the numerical solution.
To our knowledge, none of the quoted works  has ever explained the origin of the numerical  instability of asymmetric basis. 

For the clarity of the presentation and because it corresponds to  the physical  scenarios we are interested in, we will illustrate the various instability and stability phenomenons in the context of oscillating
electric fields.  On the one hand   asymmetric Hermite functions 
have a kind of degeneracy for large values of the variable, see our Remark \ref{rem:2.1}.
Then it is clear that  an electric field with constant sign (in dimension one)  {\it pushes} the density of charge (positive or negative) towards high velocities $|v|\gg 1$ for which the approximation properties of  
 asymmetric Hermite functions deteriorate. Our understanding is that   asymmetric Hermite functions are not adapted to such scenarios.
 But on the other  hand the  oscillating fields,  which are naturally generated in plasma physics  thanks to global charge neutrality, are much less prone to  {\it push} the density of charge (positive or negative) towards high velocities $|v|\gg 1$,
 so the problem
 does not show up.
  In summary, physical applications we have in mind are more for {\it resonators} than for {\it accelerators} and that is why we only consider oscillating
electric fields in this work.

Our theoretical contributions are threefold.  Firstly we provide  a simple linear explanation of the numerical instability  phenomenon. Secondly 
we propose in Theorems \ref{theor:4.1} and \ref{theor:nm}
 original compact  formulas for the scalar product (\ref{eq:sp}) of two asymmetric Hermite functions.
Thirdly we 
 show how to modify the matrices of the discrete problem so as to recover the natural stability of the model problem, as in Lemma \ref{lem:maki} for example.
 
 The plan of this work is as follows. In Section \ref{sec:2}, we provide a simple example of the  instability attached to truncated asymmetric basis for the numerical simulation of $\partial_t f +e \partial_v f=0$.
 The electric field is reversed $e\rightarrow -e$  at a given time to emulate an oscillating electric field.
 Then in Section \ref{sec:3}, we analyze the structure of the Gram matrix of   asymmetric basis.
  Section \ref{sec:4} is devoted to the exact calculation of the coefficients of the Gram matrix, where we propose formulas which are new to our knowledge.
  In Section \ref{sec:5} we present two easy-to-implement modifications of the matrix of the problem, where the antisymmetry
with respect to the Gram matrix is recovered by construction.
The next Section \ref{sec:6} is devoted to a simple generalization to the equation $\partial_t f + v \partial_x f=0$.
Finally, Section \ref{sec:7}, we illustrate the general properties with numerical tests with oscillating electric fields. 

 The notations try to keep the technicalities to the minimum, and we will use the language of linear algebra
 to detail the properties of the various objects. 

\section{Notations and illustration of the numerical instability} \label{sec:2}

Our 
 model problem  is the  transport equation in  velocity 
 \begin{equation} \label{eq:b1}
\partial_t f(t,v) + e(t) \partial_ v f(t,v)=0.
\end{equation}
The function 
 $e(t)$ is a   space-constant electric field. For the reasons explained in the introduction, the sign of $e(t)$ changes (we will take $e(t)=\pm1$ in the numerical tests).
 Any solution of the equation preserves the quadratic norm
 \begin{equation} \label{eq:b1:ra}
 \frac d{dt} \int_\mathbb R f(t,v)^2dv=0.
 \end{equation}
 Let $(H_m)_{m\in \mathbb N}$ be the family of Hermite polynomials \cite{nist,szego,magnus} which is orthogonal with respect to the Gaussian weight $e^{-v^2}$.
 Introducing a reference temperature $T>0$ (for plasma physics applications), 
 the asymmetric basis $(\psi_m)_{m\in \mathbb N}$  and the asymmetric basis   $(\psi^m)_{m\in \mathbb N}$ are defined as 
 $$
 \left\{
 \begin{array}{lrl}
\psi_m(v)=& e^{\frac{-v^2}{T}} T^{-\frac12}  (2^m m!\sqrt{\pi})^{-\frac12}H_m(v/\sqrt T) , \\
\psi^m(v)=&  (2^m m!\sqrt{\pi})^{-\frac12} H_m(v/\sqrt T)
	\ &= e^{\frac {v^2}{T}}T^{\frac12} \psi_m(v).
\end{array}
\right.
$$
 Due to  orthogonality property $\int_\mathbb R \psi_m(v) \psi^n(v) dv=\delta_{mn}$, the two families are dual.
 The  classical  symmetric Hermite function corresponds to 
   $$
   \phi_m(v)=e^{\frac{v^2}{2T}}T^{\frac14}\psi_m(v)=e^{\frac{-v^2}{2T}}T^{-\frac14}\psi^m(v).
   $$
   The family of Hermite functions
 $(\phi_m)_{m\in \mathbb N}$ forms a complete orthonormal family (Hilbertian family) of $L^2(\mathbb R)$. 
Other important  identities   for all $ m\in \mathbb N $ are 
$$
(\psi_m)'(v)=-\sqrt{\frac{ 2(m+1)} T}\  \psi_{m+1}(v)  
\mbox{ and }
(\psi^m)'(v)=\sqrt{\frac {2m}T } \ \psi^{m-1}(v).
$$  
The second identity is natural because the derivative of a polynomial is a polynomial of lesser degree.
The first property can be deduced with the help of duality between  $(\psi_m)_{m\in \mathbb N}$   and   $(\psi^m)_{m\in \mathbb N}$. 

A  common procedure to discretize  (\ref{eq:b1}) 
starts from the a priori infinite representation 
\begin{equation} \label{nn:1}
f(v)=\sum_{m\geq 0} u_m \psi_m\left(v \right)
\end{equation}
where the coefficients $u_m$ are the moments of the function $f$. By definition one has  
\begin{equation} \label{eq:b2}
e^{\frac{v^2}{2T}}T^{\frac14}f(v)=\sum_{m\geq 0} u_m\phi_m\left(v \right).
\end{equation}
The   condition for the convergence  in $L^2(\mathbb R)$ of the series in  (\ref{eq:b2})  writes as 
\begin{equation} \label{eq:b3}
e^{\frac{v^2}{2T}}T^{\frac14} f \in L^2(\mathbb R)  \Longleftrightarrow 
\left\|e^{\frac{v^2}{2T}}T^{\frac14} f  \right\| _{L^2(\mathbb R)}^2= \sum_{m\geq 0} |u_m|^2<\infty  .
\end{equation}

\begin{Remark}
At inspection of (\ref{eq:b3}), it is clear that 
an arbitrary  translation $v\rightarrow v+a$ with large $a\gg 1$ of a given profile $f$ may dramatically   increase the quadratic norm $ \sum_{m\geq 0} |u_m|^2<\infty$ because of the term $e^{\frac{v^2}{2T}}$.
It explains why it is better to use asymmetric Hermite functions for the approximation of  profiles with small tails at infinity. In this context,  
an oscillating electric field offers   the possibility  to control the spread at infinity of the profile $f$.
\end{Remark}

Let $f(v,t)$ be a solution of the transport equation (\ref{eq:b1}). Under convenient convergence conditions on the series,  one has
$$
\left\{
\begin{array}{lll}
\partial_t f(t,v)=\sum_{m\geq 0} u_m'(t) \psi_m\left(v \right), \\
\partial_v f(t,v)=- \sum_{m\geq 0} u_m(t) \sqrt{\frac{2(m+1)}T}\psi_{m+1}\left(v \right).
\end{array}
\right.
$$
Therefore (\ref{eq:b1})  rewrites as
$$
\sum_{m\geq 0} u_m'(t) \psi_m\left(v \right) -e(t) \sum_{m\geq 0} u_m(t) \sqrt{\frac{2(m+1)}T}\psi_{m+1}\left(v \right)=0
$$
from which one deduces
the identities
\begin{equation} \label{eq:b5}
u_0'(t)=0 \mbox{ and }u_{m}'(t) -e(t)\sqrt{\frac{2m}T}u_{m-1}(t)  =0 \mbox{ for all }m\geq 1.
\end{equation}

\begin{figure}[h!]
\centering
\begin{tabular}{cc}
 \includegraphics[scale = 0.275]{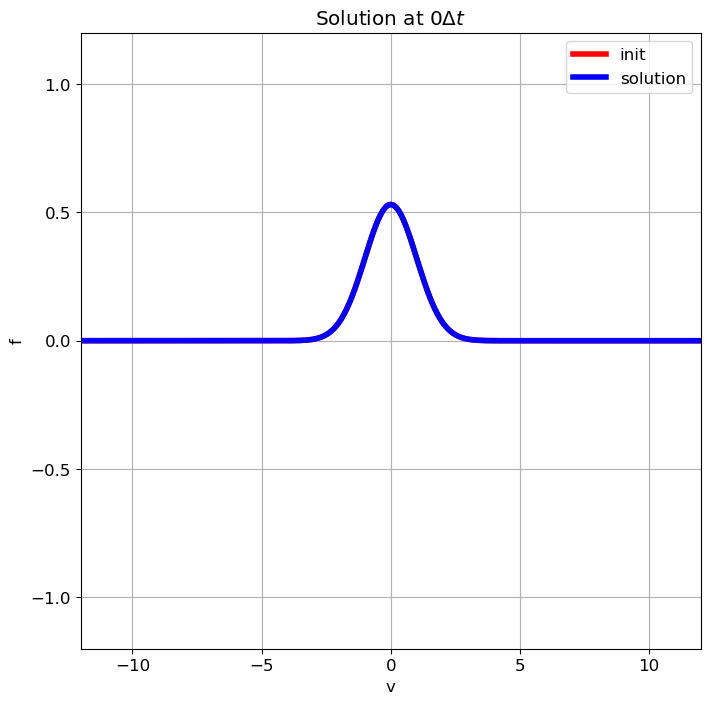}
&\includegraphics[scale = 0.275]{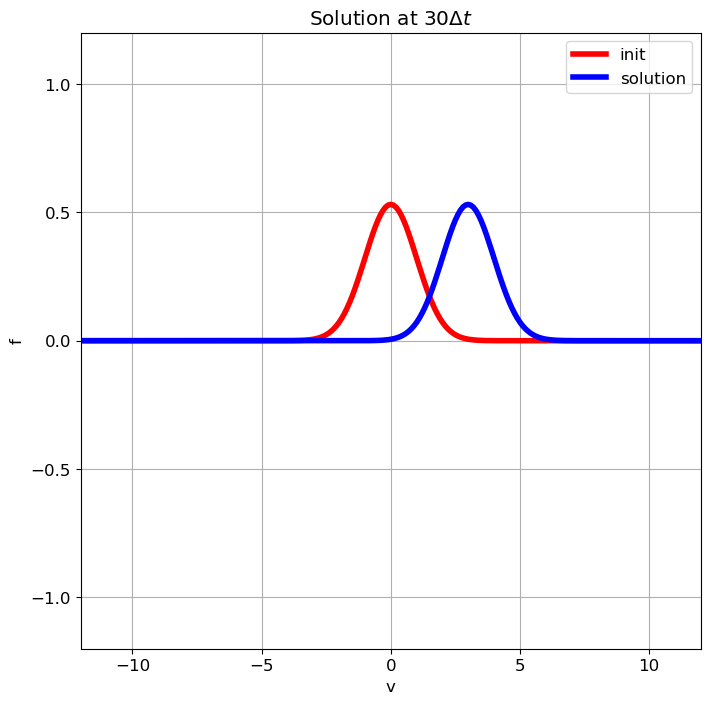} \\
 \includegraphics[scale = 0.275]{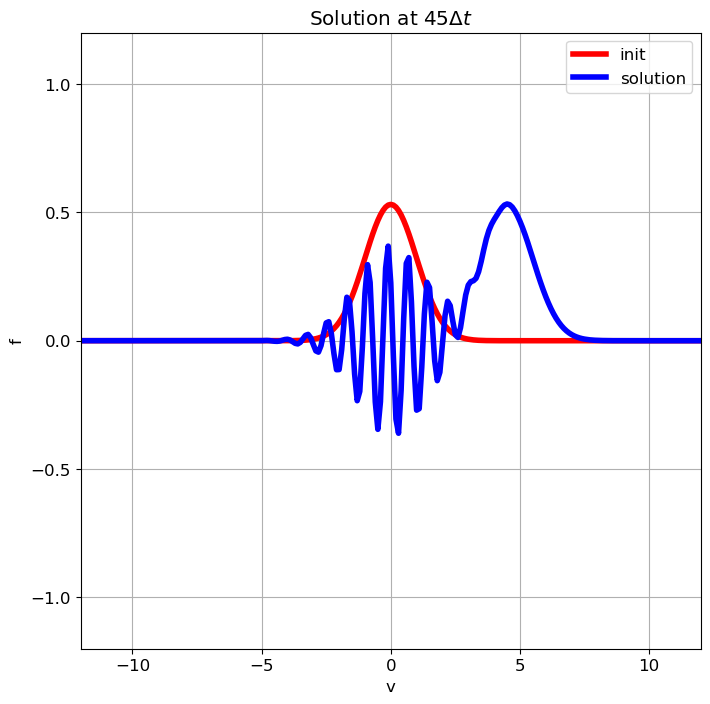}
&\includegraphics[scale = 0.275]{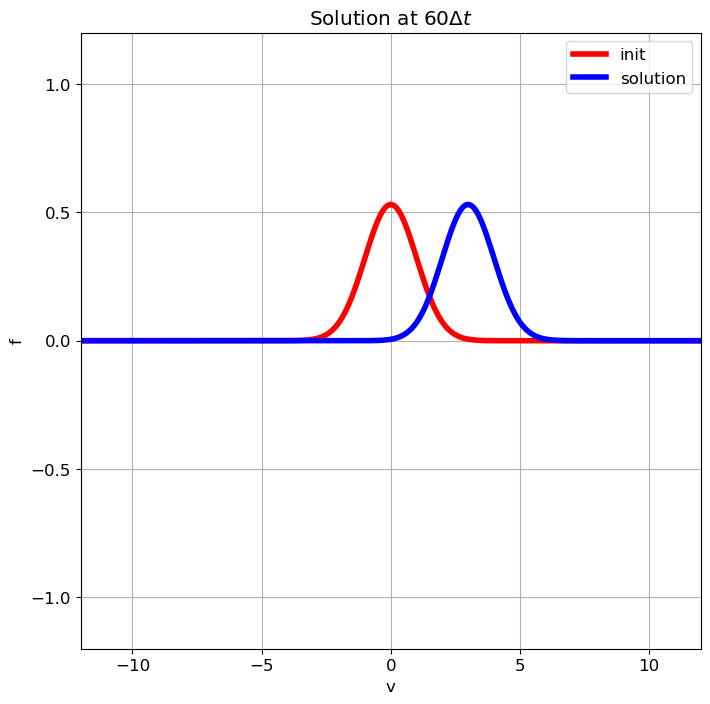} \\
 \includegraphics[scale = 0.275]{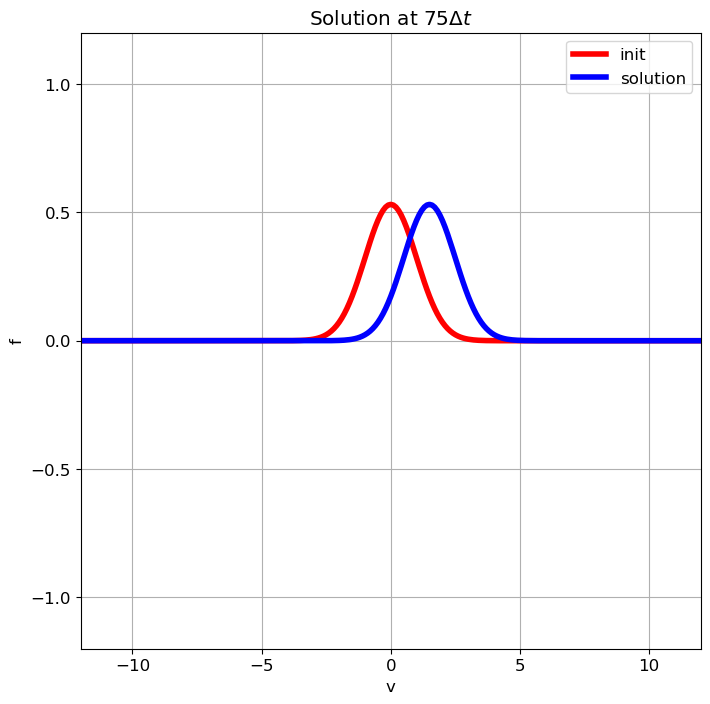}
&\includegraphics[scale = 0.275]{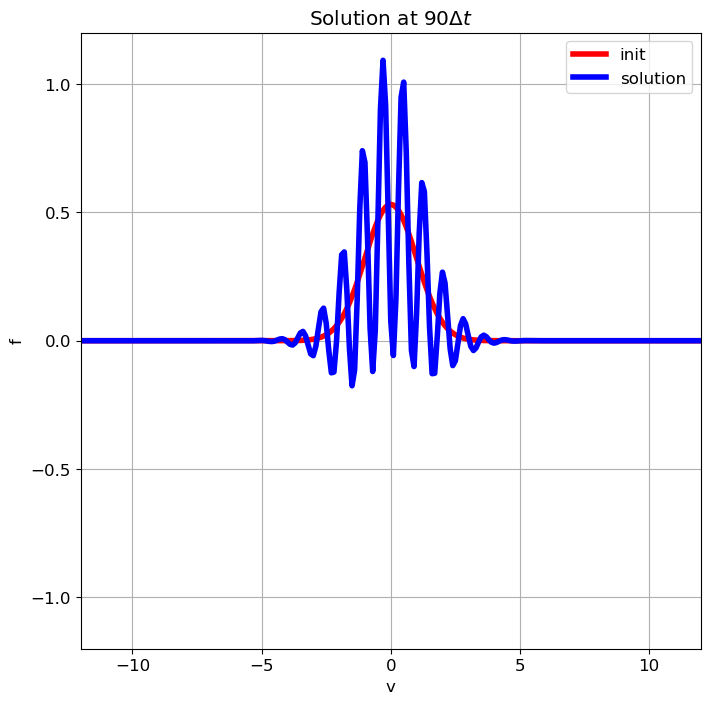} \\
 \end{tabular}
 \caption{Numerical exemple  of the advection test computed the scheme (\ref{eq:b8}) at time $t_0=0\Delta t$, $t_1=30\Delta t$, $t_2=45\Delta t$, $t_3=60\Delta t$, $t_4=75\Delta t$ then $t_5=90\Delta t$ ($N=64$ and $\Delta t=0.1$). Before time $t\approx t_2$,
 the solution is correct. The sign of the electric field is reversed at $t=4.5$. Then a numerical instability starts to be visible for  $t\approx t_2$, diminishes at later times, but is fully  unacceptable at the final time.}
 \label{fig:1}
\end{figure}

\begin{Remark} \label{rem:2.1}
The  equations (\ref{eq:b5})  display  two  remarkable   properties:
\begin{itemize}
\item  the variation of the first moment is zero which expresses that the density $\int f(t,v)v$
is constant in time, 
\item the other equations are ordered in an ascending series in the sense that the variation of $u_m$ depends only on $u_{m-1}$.
\end{itemize}
In our opinion, these properties are the reasons why the ascending series (\ref{eq:b5}) is very popular in plasma physics.
\end{Remark}
 
In this work, we will  systematically rewrite such relations as linear systems.
Let us define the infinite  triangular and sparse matrix $D\in \mathbb R^{\mathbb N\times \mathbb N}$ 
\begin{equation} \label{eq:b40}
D=(d_{mn })_{m,n\geq 0} , \qquad d_{mn}=- e \sqrt{\frac{ 2m} T}\delta_{m-1,n}.
\end{equation}
Only the first diagonal below the main diagonal is non zero.
The notation for the  infinite vector of moments is  $U(t) = (u_m(t))_{m\geq 0}\in \mathbb R^{\mathbb N}$. 
With these notations the transport  equation (\ref{eq:b1})   yields  the infinite system 
\begin{equation} \label{eq:b7}
\frac d{dt} U +  DU=0.
\end{equation}
Let $\Pi$ be the reconstruction operator such that
$$
\Pi U=f \mbox{ where }  U=(u_m)_{m\geq 0} \mbox{ and }f(v)=\sum_{m\geq 0} u_m \psi_m(v).
$$
This definition is formal in the sense that the spaces are not specified. From (\ref{eq:b7}) one can write
$
\Pi \partial_t U+ \Pi  D U=0
$. By construction on has $\Pi \partial_t U=\partial_t \Pi U=\partial_t f $ and 
\begin{equation} \label{eq:b25}
\Pi D  U= e\partial_v f.
\end{equation}
So (\ref{eq:b7}) formally implies the transport equation (\ref{eq:b1}).
To give a rigorous meaning to these formal   calculations, it is sufficient to take $U\in X$ where $X$ is the
space of  vectors with compact support
\begin{equation} \label{eq:xxx}
X=\left\{U\in \mathbb R^\mathbb N\mid u_m=0\mbox{ for } m \geq m_0 \right\}.
\end{equation}
The numerical discretization is easily performed with a simple truncation for moments between $0\leq m \leq N$. That is one considers the truncated 
vector of moments $U_N(t) = (u_m(t))_{0\leq m \leq N }\in \mathbb R^{N+1}$ and the truncated matrix 
$$
D^N=(d_{mn })_{0\leq m,n \leq N}, \quad N\in \mathbb R^{(N+1)\times (N+1)}.
$$
For the simplicity of the numerical analysis, the discretization will be systema\-tically performed with a Crank-Nicholson technique, which means that the fully discrete system writes
\begin{equation} \label{eq:b8}
\frac{U^{n+1}_N - U^n_N}{\Delta t}+  D^N \frac{U_N^n +U_N^{n+1}}{2}=0, \quad n\geq 0,
\end{equation}
where the time step is $\Delta  t>0$.
 This numerical method is in principle adapted to equations which preserve some quadratic energy as it is the case for the transport equation.
It has the advantage that, a priori,  no CFL condition is  required.

An example of a simulation is provided in Figure \ref{fig:1} at six different time 
$t_0=0\Delta t$, $t_1=30\Delta t$, $t_2=45\Delta t$, $t_3=60\Delta t$, $t_4=75\Delta t$, $t_5=90\Delta t$ and time step $\Delta t=0.1$.
The initial data is $U=(1,0, 0, \dots)$, that is only the first moment
is non zero. 
The electric field is
\[
e(t) = \left\{
\begin{aligned}
+1, \quad 0   \leq t <  4.5, \\
-1, \quad 4.5 \leq t \leq 9. \\
\end{aligned}
\right.
\]
The initial data is a pure Gaussian.
It is clear on the final result that the numerical simulation is spoiled with an important numerical instability which is  in clear contradiction with the preservation of the quadratic norm
(\ref{eq:b1:ra}). The numerical result is meaningless just after one oscillation of the electric field.
If one believes the numerical scheme (\ref{eq:b8}) is correct (which is the case),
then  the instability visible in Figure \ref{fig:1} is a paradox since the initial equation (\ref{eq:b1}) is stable.
The rest of this work is devoted to analyze the reason of this instability and to propose ways to control it.

\section{Structure of the Gram matrix} \label{sec:3}

To understand the nature of the problem at stake, 
let us expand the quadratic norm of $f(t)$ as
\begin{equation}\label{eq:quadratic_norm}
  \int_\mathbb R f(t,v)^2 dv=   \int_\mathbb R \left( \sum_{m\geq 0} u_m(t) \psi_m(v)\right) ^2 dv.
\end{equation}
Formally, that is considering that all sums are convergent, 
one has the double expansion
$$
 \int_\mathbb R \left( \sum_{m\geq 0} u_m(t) \psi_m(v)\right) ^2 dv= \sum_{m\geq 0} \sum_{n\geq 0} a_{mn} u_m(t)u_n(t)
$$
where the coefficients are 
\begin{equation} \label{eq:b20}
a_{mn}=a_{nm}= \int \psi_m\left(v \right)\psi_n\left(v \right)dv.
\end{equation}
It yields to the following definition which is central in our work.

\begin{defi}
The doubly infinite symmetric Gram matrix 
$
A=A^T=(a_{mn})_{m,n\geq 0}\in \mathbb R^{\mathbb N\times \mathbb N}$
of the problem is the collection of all    scalar products of the asymmetric basis functions.
\end{defi}
Let $\left< \cdot, \cdot \right>$ be the standard Euclidean scalar product between vectors.
By definition of the matrix $A$, one has formally 
\begin{equation} \label{eq:b26}
\left<A U, V \right>= \int_{\mathbb R} f(v)g(v) dv \mbox{ where } f=\Pi U \mbox{ and } g=\Pi V.
\end{equation}

\begin{Remark}
In the references \cite{bessemoulin,kormann2021}, the authors 
 consider a weighted scalar product 
$(\alpha, \beta)_\omega=\int_\mathbb R \alpha(v) \beta(v) \omega(v) dv$. 
 In both references the weight function is a  non trivial Maxwellian function
$\omega(v)=e^{v^2/T}$ where the parameter $T>0$ can take different values. 
The temperature $T=T(t)$ can  even change dynamically in time, as  in \cite{bessemoulin}.
In our case, the weight function is the trivial  one $\omega(v)\equiv 1$ and the scalar product  (\ref{eq:sp}) is  non weighted. 
\end{Remark}

The properties  of the Gram  matrix are studied below.

\begin{lemma} \label{lem:3.2}
The triangular matrix $D$ is skew-symmetric with respect to the scalar product induced by the matrix $A$, that is 
$
AD+D^TA=0 $. 
\end{lemma}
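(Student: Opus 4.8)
The plan is to prove the identity weakly, by testing the matrix relation against arbitrary vectors $U,V\in X$ and exploiting the functional meaning of $A$ and $D$ already recorded in the excerpt. Concretely, I would show that $\langle (AD+D^TA)U,V\rangle=0$ for every $U,V\in X$. Since both sides of $AD+D^TA=0$ are doubly infinite matrices whose entries are each given by a \emph{finite} sum (recall that $D$ has a single nonzero diagonal, so each entry of the products is well defined), this entrywise vanishing tested on the compact-support space $X$ is equivalent to the matrix identity itself.

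First I would split the bilinear form using the Euclidean adjoint. Since $D^T$ is the transpose of $D$, one has $\langle D^TAU,V\rangle=\langle AU,DV\rangle$, so that
\begin{equation*}
\langle (AD+D^TA)U,V\rangle=\langle A(DU),V\rangle+\langle AU,DV\rangle.
\end{equation*}
Now I invoke (\ref{eq:b26}) to read each term as an $L^2$ scalar product of reconstructions, and (\ref{eq:b25}) to convert the action of $D$ into a velocity derivative. Writing $f=\Pi U$ and $g=\Pi V$, the first term equals $\int_\mathbb R (\Pi DU)\,g\,dv=\int_\mathbb R e\,(\partial_v f)\, g\,dv$, while the second equals $\int_\mathbb R f\,(\Pi DV)\,dv=\int_\mathbb R e\, f\,(\partial_v g)\,dv$. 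Adding them and factoring out the constant-in-$v$ field $e$ gives
\begin{equation*}
\langle (AD+D^TA)U,V\rangle=e\int_\mathbb R \big(\partial_v f\, g+f\,\partial_v g\big)\,dv=e\int_\mathbb R \partial_v(fg)\,dv.
\end{equation*}

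The last integral is a total derivative, hence equal to the boundary values $e\,[fg]_{-\infty}^{+\infty}$. This is where the only real care is needed: for $U,V\in X$ the reconstructions $f$ and $g$ are finite linear combinations of the functions $\psi_m$, each of which carries the Gaussian factor $e^{-v^2/T}$ and therefore decays to zero as $|v|\to\infty$; thus $fg\to 0$ at both ends and the boundary term vanishes. This yields $\langle (AD+D^TA)U,V\rangle=0$ for all $U,V\in X$, and hence $AD+D^TA=0$. The main obstacle is precisely this rigor for doubly infinite matrices — one cannot naively treat $AD$ as a convergent operator product — which is why I restrict to the space $X$ of (\ref{eq:xxx}), on which $\Pi$ and the integration by parts are unambiguous.

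Equivalently, the same computation can be carried out entrywise, which may be preferable if one wants a self-contained algebraic statement. Inserting the coefficients (\ref{eq:b40}) of $D$ shows that the $(m,n)$ entry of $AD+D^TA$ is proportional to $\sqrt{m+1}\,a_{m+1,n}+\sqrt{n+1}\,a_{m,n+1}$. Integrating by parts in $a_{m+1,n}=\int_\mathbb R \psi_{m+1}\psi_n\,dv$ and using the two derivative identities $(\psi_m)'=-\sqrt{2(m+1)/T}\,\psi_{m+1}$ and $(\psi_n)'=-\sqrt{2(n+1)/T}\,\psi_{n+1}$ gives exactly $\sqrt{m+1}\,a_{m+1,n}=-\sqrt{n+1}\,a_{m,n+1}$, so every entry of $AD+D^TA$ vanishes. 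The two routes are really the same integration by parts, performed either on the reconstructed functions or on the individual basis functions.
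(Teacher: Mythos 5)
Your proof is correct and takes essentially the same route as the paper's: test $AD+D^TA$ against vectors of the compact-support space $X$, read $\left<ADU,V\right>$ as $\int_{\mathbb R} e\,\partial_v f\,g\,dv$ via (\ref{eq:b25})--(\ref{eq:b26}), and integrate by parts (your explicit treatment of the vanishing Gaussian boundary terms just spells out what the paper leaves implicit). Your supplementary entrywise check, $\sqrt{m+1}\,a_{m+1,n}=-\sqrt{n+1}\,a_{m,n+1}$, is exactly the recursion (\ref{eq:b40-rab}) that the paper itself derives later in the proof of Theorem \ref{theor:4.1}, so it is the same integration by parts in coefficient form.
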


\begin{proof}
Take any $U,V\in X$.
One has
$$
\left< A D U, V\right>= \int_\mathbb R \Pi (DU)(v) g(v) dv=  \int_\mathbb R e \partial_v f(v)  g(v) dv.
$$
An integration by parts yields
$$
\left< A D U, V\right>=-  \int_\mathbb R f(v) e \partial_v f(v)   g(v) dv= - \left<U,  A D  V\right>.
$$
Since it holds for all $U,V\in X$, one gets the claim.
\end{proof}

\begin{Remark}
Based on this property,  a solution of (\ref{eq:b7}) satisfies the formal identities
\begin{equation} \label{eq:b29}
0=\left<A U, \frac d{dt}  U \right>+ \left< AU, DU\right>=\frac d{dt}  \frac{ \left< U,  A  U \right>} 2 + \left< U, ADU\right>=\frac d{dt} \frac{ \left< U,  A  U \right>} 2
\end{equation} 
since $AD$ is a skew-symmetric matrix. One recovers that the quadratic energy $\left< U,  A  U \right>$ is constant in time, see (\ref{eq:b1:ra}).
This  strongly suggests  that the instability visible in Figure \ref{fig:1} is a finite dimensional  effect caused by the truncation of the number of moments.
\end{Remark}

To analyze the effect of moment truncation on this phenomenon we decompose the lower triangular infinite matrix $D$ as
$$
D=\left(
\begin{array}{cc}
D_{11}^N & D_{12}^N \\
D_{21}^N & D_{22}^N
\end{array}
\right)
$$
where the blocks are 
$$
\left\{
\begin{array}{clcl}
D_{11}^N=D^N &\in \mathbb R^{(N+1)\times (N+1)} , &
D_{12}^N=0 &\in \mathbb R^{(N+1)\times \mathbb N} , \\
D_{21}^N&\in \mathbb R^{\mathbb N\times (N+1)} , &
D_{22}^N&\in \mathbb R^{\mathbb N\times \mathbb N} .
\end{array}
\right.
$$
Similarly  we decompose the infinite Gram matrix 
$$
A=\left(
\begin{array}{cc}
A_{11}^N & A_{12}^N \\
A_{21} ^N& A_{22}^N
\end{array}
\right)
$$
where the blocks are 
$$
\left\{
\begin{array}{clcl}
A_{11}^N &\in \mathbb R^{(N+1)\times (N+1)} , &
A_{12} ^N&\in \mathbb R^{(N+1)\times \mathbb N} , \\
A_{21}^N&\in \mathbb R^{\mathbb N\times (N+1)} , &
A_{22}^N&\in \mathbb R^{\mathbb N\times \mathbb N} .
\end{array}
\right.
$$

\begin{lemma} \label{Lem:3.5}
For all $N\geq 1$, one has 
\begin{equation} \label{eq:b30}
A_{11}^ND_{11}^N+ (D_{11}^N)^TA_{11}^N \neq 0.
\end{equation}
\end{lemma}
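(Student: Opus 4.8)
The plan is to exploit the \emph{exact} infinite skew-symmetry $AD + D^T A = 0$ from Lemma~\ref{lem:3.2} and to isolate the truncation defect as a coupling term between the retained block and its complement. Writing $AD+D^TA=0$ in the $2\times 2$ block form induced by the splitting at index $N$, and using $D_{12}^N=0$, the $(1,1)$ block reads
\begin{equation*}
A_{11}^N D_{11}^N + A_{12}^N D_{21}^N + (D_{11}^N)^T A_{11}^N + (D_{21}^N)^T A_{21}^N = 0,
\end{equation*}
so that
\begin{equation*}
A_{11}^N D_{11}^N + (D_{11}^N)^T A_{11}^N = -\bigl(A_{12}^N D_{21}^N + (D_{21}^N)^T A_{21}^N\bigr).
\end{equation*}
Thus the quantity in \eqref{eq:b30} equals minus the coupling term, and the lemma amounts to showing this coupling term does not vanish.

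Next I would use the very sparse structure of $D$. Since $d_{mn}=-e\sqrt{2m/T}\,\delta_{m-1,n}$, the block $D_{21}^N$ (rows $\geq N+1$, columns $\leq N$) has a \emph{single} nonzero entry, namely $d_{N+1,N}=-e\sqrt{2(N+1)/T}$. Carrying out the block products, $A_{12}^N D_{21}^N$ is supported on its last column, with row-$i$ entry $-e\sqrt{2(N+1)/T}\,a_{i,N+1}$, while $(D_{21}^N)^T A_{21}^N$ is supported on its last row, with the transposed entries (using $a_{mn}=a_{nm}$). Since these two matrices overlap only at the corner, their sum vanishes if and only if $a_{i,N+1}=0$ for every $0\le i\le N$. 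Because $e\neq 0$, the lemma is therefore \emph{equivalent} to the statement that $\psi_{N+1}$ is not $L^2$-orthogonal to $V_N:=\operatorname{span}\{\psi_0,\dots,\psi_N\}$.

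To prove this non-orthogonality I would argue by contradiction. Each $\psi_m$ equals $e^{-v^2/T}$ times a constant multiple of $H_m(v/\sqrt T)$, so $V_N=\{\,e^{-v^2/T}q(v/\sqrt T):\deg q\le N\,\}$ while $\psi_{N+1}$ is $e^{-v^2/T}$ times a polynomial of degree exactly $N+1$. If $\psi_{N+1}\perp V_N$, then after the substitution $w=v/\sqrt T$ one obtains $\int_\mathbb{R} e^{-2w^2}H_{N+1}(w)q(w)\,dw=0$ for all $q$ with $\deg q\le N$; that is, $H_{N+1}$ is orthogonal, with respect to the weight $e^{-2w^2}$, to all polynomials of degree $\le N$. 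The space of degree-$(N+1)$ polynomials with this property is one-dimensional, spanned by the corresponding rescaled Hermite polynomial $H_{N+1}(\sqrt2\,w)$, so the assumption forces $H_{N+1}(w)\propto H_{N+1}(\sqrt2\,w)$. Comparing the coefficients of $w^{N+1}$ and $w^{N-1}$ (both nonzero once $N+1\ge 2$, i.e.\ $N\ge 1$), the ratio between the corresponding coefficients of $H_{N+1}(\sqrt2\,w)$ and $H_{N+1}(w)$ would have to be the same, yet it equals $2^{(N+1)/2}$ for the degree-$(N+1)$ term and $2^{(N-1)/2}$ for the degree-$(N-1)$ term, a contradiction. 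Hence $a_{i,N+1}\neq0$ for some $i$, and \eqref{eq:b30} follows.

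The main obstacle is precisely this last non-orthogonality step: a naive attempt to exhibit a single fixed index $i$ with $a_{i,N+1}\neq0$ fails, since by the parity of Hermite functions $a_{i,N+1}=0$ whenever $i$ and $N+1$ have opposite parity (so, for instance, $a_{0,N+1}=0$ for even $N$). The orthogonal-polynomial comparison is what handles all parities at once, and it also explains why the hypothesis $N\geq1$ is needed: for $N=0$ one has $H_1(w)=2w\propto H_1(\sqrt2\,w)$, the proportionality is \emph{not} contradictory, and indeed $D_{11}^0=0$ makes the left-hand side of \eqref{eq:b30} vanish.
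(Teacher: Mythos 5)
Your proof is correct, and its first half --- the block decomposition of $AD+D^TA=0$, the extraction of the $(1,1)$ block, and the exploitation of the single nonzero entry of $D_{21}^N$ --- is exactly the paper's argument. Where you genuinely diverge is in the final, crucial step. The paper, after noting that the coupling term is supported on the last column (with entries proportional to the first column $(a_{i,N+1})_{0\le i\le N}$ of $A_{12}^N$), simply \emph{asserts} that it is nonzero; the non-vanishing of at least one $a_{i,N+1}$ is not justified there, and only becomes evident in Section \ref{sec:4}, where formula (\ref{eq:b46}) with $m=N$, $l=1$ gives $a_{N-1,N+1}\neq 0$ explicitly. You close this gap with a self-contained argument: the reduction of the lemma to the statement that $\psi_{N+1}$ is not $L^2$-orthogonal to $\operatorname{span}\{\psi_0,\dots,\psi_N\}$ is sound (the column-supported matrix plus its transpose vanishes only if every entry does, the corner entry being zero by parity anyway), and the orthogonal-polynomial step is correct: orthogonality against all polynomials of degree $\le N$ under the weight $e^{-2w^2}$ would force $H_{N+1}(w)\propto H_{N+1}(\sqrt2\,w)$ by one-dimensionality, which the comparison of the degree-$(N+1)$ and degree-$(N-1)$ coefficients rules out, since the subleading coefficient of $H_{n}$ at degree $n-2$ is nonzero for $n\ge2$. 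This respects the paper's logical order (Lemma \ref{Lem:3.5} precedes Theorem \ref{theor:4.1}), and your parity remark together with the $N=0$ check ($D_{11}^0=0$, and $H_1(w)\propto H_1(\sqrt2\,w)$ is consistent) correctly explains why the hypothesis $N\ge1$ is needed --- a point the paper leaves implicit. The trade-off: once Theorem \ref{theor:4.1} is granted, the conclusion follows in one line from $a_{N-1,N+1}\neq0$, whereas your route is longer but independent of the explicit Gram coefficients and arguably more robust (it would survive any weight for which uniqueness of orthogonal polynomials holds).
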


\begin{proof}
The equality $AD+D^TA=0$ reduces to
\begin{equation} \label{eq:fl}
\left\{
\begin{array}{ll}
A_{11}^ND_{11}^N+ A_{12}^N D_{21}^N + (D_{11}^N)^TA_{11}^N +(D_{21}^N)^TA_{21}^N =0, \\
A_{12}^N D_{22}^N+(D_{22}^N)^TA_{21}^N =0, \\
A_{21}^ND_{11}^N+ A_{22}^N D_{21}^N + (D_{11}^N)^TA_{21}^N +(D_{21}^N)^TA_{21}^N =0, \\
A_{22}^N D_{22}^N+ (D_{22}^N)^TA_{21}^N =0.
\end{array}
\right.
\end{equation}
One obtains $A_{11}^ND_{11}^N+  (D_{11}^N)^TA_{11}^N=-  A_{12}^N D_{21}^N-(D_{21}^N)^TA_{21}^N\in \mathbb R^{(N+1)\times (N+1)}$.
Since $D$ is an infinite  triangular matrix with only one non zero diagonal just below the main diagonal (see (\ref{eq:b40})), then the coefficients of 
$D_{21}^N$ are all zero except one at its top right corner which is non zero
\begin{equation} \label{eq:d12n}
 D_{21}^N= \left( 
\begin{array}{c|c|c|c|c|c|c|c|c|c}
0 & 0 & 0 & 0 & \dots & \dots & 
\begin{array}{c}
-e \sqrt{\frac{2(N+1)}{T}} \\
0 \\
0 \\
0 \\
\cdots \\
0 \\
0 \\
0 \\
\cdots
\end{array}
\end{array}
 \right)\in \mathbb R^{\mathbb N\times (N+1)}  .
\end{equation}
The multiplication by $A_{12}^N $ yields a matrix which is zero everywhere except its last column (which is proportional to the first column of  $A_{12}^N $) that is 
$$
A_{12}^N D_{21}^N= \left( 
\begin{array}{c|c|c|c|c|c|c}
0 & 0 & 0 & 0 & \dots & \dots & 
\begin{array}{c}
m_0 \\
m_1 \\
m_2 \\
m_3 \\
\cdots \\
\cdots \\
m_{N} 
\end{array}
\end{array}
 \right) \in \mathbb R^{(N+1)\times (N+1)}.
$$
One obtains $A_{12}^N D_{21}^N  - (D_{21}^N)^TA_{21}^N \neq 0$.
So one obtains  
$$
A_{11}^ND_{11}^N + (D_{11}^N)^TA_{11}^N =-  A_{12}^N D_{21}^N  - (D_{21}^N)^TA_{21}^N \neq 0
$$
which  yields  the claim. 
\end{proof}

In our opinion, this property (\ref{eq:b30}) expresses that fact that the truncation to a finite number of moments (parameter $N$, representing $N+1$ moments) 
spoils the skew-symmetric property explained 
in  Lemma \ref{lem:3.2}. 
That is why the boundedness (\ref{eq:b29})  of the solution is not preserved by moment truncation.


\section{Coefficients  of the Gram matrix} \label{sec:4}

The coefficients of the matrix $A$ are $L^2$  scalar products of asymmetric functions. These coefficients are computable in finite terms since the product
of two asymmetric functions can be expressed as a Gaussian function multiplied by a polynomial function.
However, to our knowledge,  the exact value of these coefficients is not available in the reference  literature on special functions \cite{nist,szego,magnus}.
For further developments in the next Section, we propose in this Section some formulas for the calculation of the quadratic scalar product of asymmetric Hermite functions.

\begin{Theorem} \label{theor:4.1}
If the sum of the indices is odd $m+n\in 2\mathbb N+1$, then $a_{mn}=0$. Otherwise  
\begin{equation} \label{eq:b46}
a_{m-l, m+l}=(-1)^l T^{-\frac12}2^{-2m - \frac{1}2}    \frac{(2m)!}{m! \sqrt{(m-l)!(m+l)!}}.
\end{equation}
\end{Theorem}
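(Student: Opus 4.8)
The plan is to reduce both claims to a single integral against a doubled Gaussian weight and then evaluate that integral through a two–variable generating function. First I would rescale: with the substitution $w=v/\sqrt T$ the product $\psi_m\psi_n$ carries the weight $e^{-2v^2/T}$, which becomes $e^{-2w^2}$, while the normalization constants factor out. This leaves
\[
a_{mn}=T^{-\frac12}\bigl(2^m m!\sqrt\pi\bigr)^{-\frac12}\bigl(2^n n!\sqrt\pi\bigr)^{-\frac12}\,I_{mn},\qquad
I_{mn}:=\int_\mathbb R e^{-2w^2}H_m(w)H_n(w)\,dw,
\]
so the entire content of the theorem is encoded in the numbers $I_{mn}$.

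To compute all the $I_{mn}$ at once I would assemble their exponential generating function using the classical identity $\sum_{m\geq0}H_m(w)s^m/m!=e^{2ws-s^2}$, giving
\[
\sum_{m,n\geq0}\frac{s^m t^n}{m!\,n!}\,I_{mn}=\int_\mathbb R e^{-2w^2}\,e^{2ws-s^2}\,e^{2wt-t^2}\,dw .
\]
Completing the square in $w$ in the exponent $-2w^2+2w(s+t)-s^2-t^2$ produces a shifted Gaussian times a remainder that simplifies to $-\tfrac12(s-t)^2$; carrying out the Gaussian integral (of value $\sqrt{\pi/2}$) yields the compact closed form
\[
\sum_{m,n\geq0}\frac{s^m t^n}{m!\,n!}\,I_{mn}=\sqrt{\tfrac{\pi}{2}}\;e^{-\frac12(s-t)^2}.
\]
The parity statement is then immediate: the right-hand side depends on $(s-t)^2$ alone, so only monomials $s^p t^q$ with $p+q$ even can occur, forcing $I_{mn}=0$ and hence $a_{mn}=0$ whenever $m+n$ is odd.

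For the even case I would expand $e^{-\frac12(s-t)^2}=\sum_{k\geq0}\frac{(-1)^k}{2^k k!}(s-t)^{2k}$ and read off the coefficient of $s^{m-l}t^{m+l}$. Only the term $k=m$ contributes, and the binomial theorem supplies the factor $\binom{2m}{m-l}(-1)^{m+l}$; collecting the sign as $(-1)^{2m+l}=(-1)^l$ and multiplying back by $(m-l)!(m+l)!$ turns $\binom{2m}{m-l}(m-l)!(m+l)!$ into $(2m)!$, so that $I_{m-l,m+l}=\sqrt{\pi/2}\,(-1)^l (2m)!/(2^m m!)$. Substituting this into the relation for $a_{mn}$, the product of the two normalization constants collapses to $2^{-m}\pi^{-\frac12}\bigl((m-l)!(m+l)!\bigr)^{-\frac12}$, and combining $\pi^{-\frac12}\sqrt{\pi/2}=2^{-\frac12}$ with the two factors $2^{-m}$ reproduces exactly the prefactor $T^{-\frac12}2^{-2m-\frac12}$ and the quotient $(2m)!/\bigl(m!\sqrt{(m-l)!(m+l)!}\bigr)$ of the claim.

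The analytic input is entirely routine: one Gaussian integral and one generating-function identity. I expect the only delicate point to be the bookkeeping during coefficient extraction and renormalization—keeping the half-integer powers of $2$ and $\pi$, the signs, and the factorials correctly aligned—so the main care is arithmetic rather than conceptual.
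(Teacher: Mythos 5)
Your proof is correct: the reduction to $I_{mn}=\int_\mathbb{R} e^{-2w^2}H_m(w)H_n(w)\,dw$, the closed form $\sum_{m,n} \frac{s^mt^n}{m!\,n!}I_{mn}=\sqrt{\pi/2}\,e^{-\frac12(s-t)^2}$, and the coefficient extraction (only $k=m$ contributes to the coefficient of $s^{m-l}t^{m+l}$, with sign $(-1)^{2m+l}=(-1)^l$) all check out, and the bookkeeping at the end does reproduce $(-1)^l T^{-\frac12}2^{-2m-\frac12}(2m)!/\bigl(m!\sqrt{(m-l)!(m+l)!}\bigr)$ exactly. However, your route is genuinely different from the paper's. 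The paper never forms a generating function: it first derives, by integration by parts using the ladder identities $(\psi_m)'=-\sqrt{2(m+1)/T}\,\psi_{m+1}$ and $(\psi^m)'=\sqrt{2m/T}\,\psi^{m-1}$, the recurrence $\sqrt{m}\,a_{mn}=-\sqrt{n+1}\,a_{m-1,n+1}$, which by iteration reduces every coefficient to the edge value $a_{0,m+n}$; that single value is then computed in a separate lemma via the rescaling formula $H_m(\lambda x)=\sum_l \lambda^{m-2l}(\lambda^2-1)^l \frac{m!}{(m-2l)!\,l!}H_{m-2l}(x)$ with $\lambda=1/\sqrt2$, picking out the constant term against the doubled Gaussian weight. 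The paper also disposes of the odd-parity case by a direct symmetry argument (odd polynomial times Gaussian integrates to zero), whereas in your argument parity falls out for free from the dependence on $(s-t)^2$ alone. What your approach buys is a single self-contained computation delivering all coefficients and the parity statement at once, with no appeal to tabulated special-function identities; what the paper's approach buys is that the recurrence (\ref{eq:b40-rab}) it establishes en route is reused structurally (the ladder relations are the same ones generating the matrix $D$, and the stable evaluation formulas of Theorem \ref{theor:nm} are recurrences of exactly this flavor). The one point you should make explicit for full rigor is the interchange of summation and integration when assembling the double generating function, but since $e^{2ws-s^2}$ converges absolutely and locally uniformly with Gaussian domination in $w$, this is routine.
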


\begin{proof}
If $m+n$ is odd, then $\psi_m \psi_n$ is equal to a Gaussian function multiplied by an odd polynomial, so its integral vanishes. In this case $a_{mn}=0$.
So let us consider the other case.

One has 
$
\sqrt{ 2m/T } \  a_{mn}= \sqrt{ 2m/T } \ \int \psi_m(v) \psi_n(v)dv$. Using the general identity $
(\psi_m)'(v)=-\sqrt{\frac{ 2(m+1)} T}\  \psi_{m+1}(v)$, one can write
$$
\sqrt{ 2m/T } \  a_{mn}= -  \int \psi_{m-1}'(v) \psi_n(v)dv=  \int \psi_{m}(v) \psi_n'(v)dv
$$
$$
=
- \sqrt{ 2(n+1)/T }   \int \psi_{m}(v) \psi_{n+1}(v)dv = - \sqrt{ 2(n+1)/T }   \  a_{m-1,n+1}.
$$
That is
\begin{equation} \label{eq:b40-rab}
\sqrt{ m } \  a_{mn}=- \sqrt{ n+1 }   \  a_{m-1,n+1}.
\end{equation}
One gets by iteration
$$
{\left( m(m-1) \dots 2 \right) }^\frac12   a_{mn}= (-1)^m { \left( n(n+1) \dots (m+n)\right)}^\frac12 a_{0,m+n}
$$
 that is
\begin{equation} \label{eq:b41}
a_{mn}=(-1)^m \left( {\frac{(n+m)!}{n!m!}} \right)^\frac12a_{0,m+n}.
\end{equation}
The technical Lemma \ref{lem:4.2} yields the value of $a_{0,m+n}$ from which one obtains 
$$
a_{mn}=(-1)^m \left( {\frac{(n+m)!}{n!m!}} \right)^\frac12(-1)^{(m+n)/2} T^{-\frac12}2^{-(m+n) - \frac{1}2}  \frac{ (m+n)!^\frac12} { \left( \frac{m+n}2 \right)! }
$$
that is
\begin{equation} \label{eq:amn_explicit_final}
a_{mn}=(-1)^{\frac{m-n}2}T^{-\frac12}2^{-(m+n) - \frac{1}2} \frac{(m+n)!}{ \left( \frac{m+n}2 \right)! \sqrt{m!n!} }.
\end{equation}
This is the claim up to the change of indices $(m,n)\leftarrow (m-l, n+l)$.
\end{proof}

\begin{lemma} \label{lem:4.2}
Let $m\in 2\mathbb N$. 
One has
$
a_{0m}=(-1)^{m/2} T^{-\frac12}2^{-m - \frac{1}2}  \frac{ (m!)^\frac12} { (m/2)! }$. 
\end{lemma}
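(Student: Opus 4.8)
The plan is to reduce $a_{0m}=\int_{\mathbb R}\psi_0(v)\psi_m(v)\,dv$ to a single standard Gaussian--Hermite integral and then evaluate that integral via the Hermite generating function. First I would insert the definitions. Since $H_0\equiv 1$ one has $\psi_0(v)=e^{-v^2/T}T^{-1/2}\pi^{-1/4}$, so the product $\psi_0\psi_m$ is $e^{-2v^2/T}$ times a constant times $H_m(v/\sqrt T)$. The substitution $w=v/\sqrt T$ then collapses everything to
\begin{equation} \label{eq:lem42reduce}
a_{0m}=T^{-\frac12}\pi^{-\frac14}\bigl(2^m m!\sqrt\pi\bigr)^{-\frac12}\,I_m,\qquad I_m:=\int_{\mathbb R}e^{-2w^2}H_m(w)\,dw.
\end{equation}
All the temperature dependence is now carried by the explicit prefactor $T^{-1/2}$, and the whole problem is to compute the $T$-independent integral $I_m$.

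The key step is to evaluate $I_m$ for all $m$ at once through the generating function $\sum_{m\geq 0}H_m(w)\,s^m/m!=e^{2ws-s^2}$. Multiplying by $e^{-2w^2}$ and integrating term by term, I would complete the square in the exponent $-2w^2+2ws=-2(w-s/2)^2+s^2/2$, which gives a clean Gaussian integral $\int_{\mathbb R}e^{-2(w-s/2)^2}dw=\sqrt{\pi/2}$ independent of $s$. This yields the compact identity
\begin{equation} \label{eq:lem42gen}
\sum_{m\geq 0}\frac{I_m}{m!}\,s^m=e^{-s^2}\,e^{s^2/2}\sqrt{\tfrac{\pi}{2}}=\sqrt{\tfrac{\pi}{2}}\;e^{-s^2/2}.
\end{equation}
Expanding $e^{-s^2/2}=\sum_{k\geq 0}(-1)^k s^{2k}/(2^k k!)$ and matching coefficients of $s^m$ shows immediately that $I_m=0$ when $m$ is odd, and that for $m=2k$ even,
\begin{equation} \label{eq:lem42Im}
I_{m}=\sqrt{\tfrac{\pi}{2}}\,\frac{(-1)^{m/2}\,m!}{2^{m/2}\,(m/2)!}.
\end{equation}

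It then remains to substitute \eqref{eq:lem42Im} into \eqref{eq:lem42reduce} and collect factors. I would track the three families of constants separately: the powers of $\pi$ cancel exactly ($\pi^{-1/4}\cdot\pi^{-1/4}\cdot\pi^{1/2}=1$), the factorials combine as $(m!)^{-1/2}\cdot m!=(m!)^{1/2}$ with the $(m/2)!$ surviving in the denominator, and the powers of $2$ assemble to $2^{-m/2}\cdot 2^{-1/2}\cdot 2^{-m/2}=2^{-m-1/2}$, leaving the sign $(-1)^{m/2}$ and the factor $T^{-1/2}$; this reproduces exactly the claimed value of $a_{0m}$. I do not expect a genuine obstacle here, since the generating-function identity \eqref{eq:lem42gen} does all the analytic work in one line; the only place demanding care is the constant-bookkeeping in the final substitution, where the cancellation of the $\pi$-powers and the consolidation of the $2$-powers must be carried out exactly.
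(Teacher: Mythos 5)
Your proof is correct: the reduction to $I_m=\int_{\mathbb R}e^{-2w^2}H_m(w)\,dw$, the generating-function evaluation $\sum_{m\ge 0}I_m s^m/m!=\sqrt{\pi/2}\,e^{-s^2/2}$, and the final bookkeeping of the $\pi$-, $2$- and factorial factors all check out and reproduce the stated value of $a_{0m}$. The route, however, differs from the paper's. The paper quotes the rescaling identity $H_m(\lambda x)=\sum_{l=0}^{[m/2]}\lambda^{m-2l}(\lambda^2-1)^l\frac{m!}{(m-2l)!\,l!}H_{m-2l}(x)$ from the special-functions literature, takes $\lambda=1/\sqrt2$, and observes that in the expansion of $H_m(v/\sqrt T)$ only the constant term $\left(-\tfrac12\right)^{m/2}\frac{m!}{(m/2)!}$ survives integration against the weight $e^{-2v^2/T}$, since all other terms are Hermite polynomials of degree $\ge 1$ and are killed by orthogonality; the result then follows from the single Gaussian integral $\int_{\mathbb R}e^{-2v^2/T}dv=\sqrt{T\pi/2}$. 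Your generating-function argument is self-contained — it needs no handbook formula and, as a bonus, shows $I_m=0$ for odd $m$, re-deriving the parity vanishing that the paper handles separately in Theorem \ref{theor:4.1} — at the modest cost of one routine analytic step you should acknowledge explicitly, namely justifying the term-by-term integration of the series (standard by dominated convergence, since the generating function converges for all $s$ and the integrand is dominated by an integrable Gaussian). The paper's argument, conversely, stays entirely within finite algebraic identities plus orthogonality, which is why it can skip any interchange-of-limits discussion; the two proofs meet at the same Gaussian integral in the last line.
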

\begin{proof}
One has
$
\psi_0(v)\psi_m(v)=
T^{-1}\pi^{-\frac12} (2^mm!)^{-\frac12} e^{-2v^2/T}H_m(v/\sqrt{T})$. 
To be able to  perform a rescaling in this expression, 
 one can use  the general formula \cite[page 255]{magnus}
$$
H_m(\lambda x)=  \sum_{l=0}^{[m/2]} \lambda^{m-2l} (\lambda^{2}-1)^l  \frac{m!}{(m-2l))!l!}H_{m-2l}(x)
.
$$ 
Take $\lambda=1/\sqrt 2$ and $x= \sqrt 2 v/\sqrt{T}$. Then
$$
H_m(v/\sqrt{T})= \left( -  \frac{1}{2} \right)^{m/2}
\frac{m!}{(m/2)!} +R(v)
$$
where the residual $R(v)$ is orthogonal to the weight $e^{-2v^2/T}$ because it is a linear combination of Hermite polynomials of degree $\geq 1$ (with convenient weight).
One obtains
$$
a_{0m}=\int \psi_0(v)\psi_m(v) dv =T^{-1}\pi^{-\frac12} (2^mm!)^{-\frac12}  \left( -  \frac{1}{2} \right)^{m/2}  \frac{m!}{(m/2)!}  \sqrt{T\pi/2}
$$
which yields the   claim after simplification.
\end{proof}

\begin{lemma}
Take  $m\in \mathbb N$.
Then
\begin{equation} \label{eq:amm}
a_{mm}= 
T^{-\frac12}2^{-2m - \frac{1}2}  \frac{ (2m)!} { \left( m ! \right)^2 }.
\end{equation}
For large $m\gg 1$, one has 
$
a_{mm}\approx (T\pi 2 m) ^{-\frac12} $. 
\end{lemma}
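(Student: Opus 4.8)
The plan is to read off the exact formula as the diagonal case of Theorem \ref{theor:4.1}, and then to extract the asymptotics from Stirling's formula applied to the central binomial coefficient. No independent computation should be needed for the exact value, since the heavy work has already been carried out in the general closed formula (\ref{eq:b46}).

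First I would specialize (\ref{eq:b46}) to $l=0$. When $l=0$ one has $m-l=m+l=m$ and $(-1)^0=1$, so the general identity reads directly
$$
a_{mm}=T^{-\frac12}2^{-2m-\frac12}\frac{(2m)!}{m!\sqrt{m!\,m!}}=T^{-\frac12}2^{-2m-\frac12}\frac{(2m)!}{(m!)^2},
$$
which is precisely (\ref{eq:amm}). This is the whole content of the exact part: it is simply the already-established scalar-product formula restricted to the main diagonal of the Gram matrix.

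For the large-$m$ behaviour I would recognize the ratio $(2m)!/(m!)^2=\binom{2m}{m}$ as the central binomial coefficient, whose classical Stirling equivalent is $\binom{2m}{m}\sim 4^m/\sqrt{\pi m}$ as $m\to\infty$. Substituting this into the exact expression and writing $4^m=2^{2m}$ gives
$$
a_{mm}\sim T^{-\frac12}2^{-2m-\frac12}\,\frac{2^{2m}}{\sqrt{\pi m}}=T^{-\frac12}2^{-\frac12}\frac{1}{\sqrt{\pi m}}=(2T\pi m)^{-\frac12},
$$
which is the announced estimate $a_{mm}\approx(T\pi 2 m)^{-1/2}$. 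I do not expect any genuine obstacle here, as the result is essentially a corollary of Theorem \ref{theor:4.1}; the only point demanding a little care is the cancellation in the asymptotic step, where the exponentially small factor $2^{-2m}$ must be matched against the $4^m$ produced by Stirling's equivalent of $\binom{2m}{m}$, so that both exponential factors cancel and only the algebraic decay $m^{-1/2}$ survives.
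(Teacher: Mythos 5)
Your proposal is correct and follows essentially the same route as the paper: the exact formula is the $l=0$ specialization of Theorem \ref{theor:4.1}, and the asymptotics come from Stirling's formula, which the paper applies directly to the factorials (as $m!\approx\sqrt{2\pi m}\,(m/e)^m$) while you use its packaged form $\binom{2m}{m}\sim 4^m/\sqrt{\pi m}$ --- the same computation in different notation. Both yield $a_{mm}\sim (2\pi T m)^{-\frac12}$, matching the claimed $(T\pi 2m)^{-\frac12}$.
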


This formula is approximately  in accordance with the fact that the amplitude of the Hermite functions decreases like $O(m^{-\frac14})$ in the 
main "support" of Hermite functions \cite{szego}.

\begin{proof}
The Stirling formula written as  $m!\approx \sqrt{2\pi m} (m/e)^m$ yields that
$$
a_{mm} \approx 
T^{-\frac12}2^{-2m - \frac{1}2} 
\frac{\sqrt{2\pi 2 m} (2m/e)^{2m}}{\left( \sqrt{2\pi m} (m/e)^m \right)^2}
\approx (T\pi 2 m) ^{-\frac12}  .
$$
\end{proof}

Unfortunately the previous  formula (\ref{eq:amm}) cannot be  used to calculate the coefficients in a stable manner because 
 the calculation on the computer of the   factorial  of  large natural numbers is difficult.
Nevertheless $a_{mm}=O(m^{-\frac14})$ which is an indication
that the ratio of large numbers in (\ref{eq:amm}) is asymptotically a  small number.
It gives the intuition of the following formulas which provide a stable method to calculate all coefficients.

\begin{Theorem} \label{theor:nm}
The coefficients of the Gram matrix can be evaluated with computationally stable formulas.
\\
i) To calculate the diagonal coefficients of the Gram matrix, use the recurrence formulas
\begin{equation} \label{eq:iter1}
\left\{
\begin{array}{ll}
a_{00}=(2T)^{-\frac12},\\
a_{m+1,m+1}=\frac{2m+1}{2m+2}a_{mm}, & m\geq 0.
\end{array}
\right.
\end{equation}
ii) To calculate the upper extra-diagonal coefficients of the  Gram matrix, use the recurrence formulas which starts from the diagonal
\begin{equation} \label{eq:iter2}
\begin{array}{ll}
\forall m\geq 1: \quad a_{m-l-1, m+l+1}= \sqrt{\frac{m-l}{m+l+1}} a_{m-l, m+l}, & l=0, \dots, m-1, \\
\end{array}
\end{equation}
iii) The lower diagonal coefficients are equal to the  upper extra-diagonal coefficients
\begin{equation} \label{eq:iter3}
\forall m\geq 1: \quad a_{m+l, m-l}=a_{m-l, m+l} \mbox{ for } 1 \leq l \leq  m.
\end{equation}
\end{Theorem}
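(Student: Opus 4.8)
The plan is to obtain all three recurrences directly from the closed form of Theorem~\ref{theor:4.1} together with the diagonal value (\ref{eq:amm}), reducing each identity to an elementary simplification of factorials, and then to establish the headline claim---computational stability---by checking that every multiplicative factor appearing in (\ref{eq:iter1}) and (\ref{eq:iter2}) stays in the interval $(0,1]$, so that no large factorial is ever formed explicitly.

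For part i) I would first read the initialisation off (\ref{eq:amm}): setting $m=0$ gives $a_{00}=T^{-\frac12}2^{-\frac12}=(2T)^{-\frac12}$. For the induction step I would divide two consecutive diagonal values taken from (\ref{eq:amm}),
\begin{equation}
\frac{a_{m+1,m+1}}{a_{mm}}=\frac{2^{-2(m+1)}}{2^{-2m}}\,\frac{(2m+2)!}{((m+1)!)^2}\,\frac{(m!)^2}{(2m)!}=\frac14\,\frac{(2m+2)(2m+1)}{(m+1)^2}=\frac{2m+1}{2m+2},
\end{equation}
which is precisely (\ref{eq:iter1}). Since $\frac{2m+1}{2m+2}\in(\tfrac12,1)$ for every $m$, the telescoping product that reconstructs $a_{mm}$ from $a_{00}$ involves only bounded factors.

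For part ii) I would isolate the part of $a_{m-l,m+l}$ that carries the numerical information: by Theorem~\ref{theor:4.1} its magnitude is $|a_{m-l,m+l}|=T^{-\frac12}2^{-2m-\frac12}\frac{(2m)!}{m!\sqrt{(m-l)!(m+l)!}}$, in which the only $l$-dependence is the factor $\big((m-l)!(m+l)!\big)^{-\frac12}$. At fixed centre $m$ the ratio of two consecutive off-diagonal magnitudes is then
\begin{equation}
\frac{|a_{m-l-1,m+l+1}|}{|a_{m-l,m+l}|}=\sqrt{\frac{(m-l)!\,(m+l)!}{(m-l-1)!\,(m+l+1)!}}=\sqrt{\frac{m-l}{m+l+1}},
\end{equation}
which is exactly the factor in (\ref{eq:iter2}). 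The alternating prefactor $(-1)^l$ is already displayed explicitly in Theorem~\ref{theor:4.1}, so the stable evaluation propagates the positive magnitudes from the diagonal through (\ref{eq:iter2}) and appends the sign $(-1)^l$; equivalently, (\ref{eq:iter2}) is the three-term relation (\ref{eq:b40-rab}) read with $(m,n)=(m-l,m+l)$ once the explicit sign has been separated off. Stability is immediate because $\sqrt{(m-l)/(m+l+1)}\in[0,1)$ for $l=0,\dots,m-1$.

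Part iii) requires no computation: the Gram matrix is symmetric by its very definition (\ref{eq:b20}), so $a_{m+l,m-l}=a_{m-l,m+l}$ is just $a_{pq}=a_{qp}$ with $p=m+l$, $q=m-l$. The only genuinely delicate point in the whole argument is the sign bookkeeping in part ii): one must be careful to let (\ref{eq:iter2}) transport the \emph{magnitudes} of the coefficients---which is where the numerical difficulty of (\ref{eq:amm}) lived, through the ratio of huge factorials---while keeping the alternating sign $(-1)^l$ in the separate, trivially stable factor furnished by Theorem~\ref{theor:4.1}. Once this is done, verifying that all the recurrence factors lie in $(0,1]$ is what establishes the stability announced in the statement.
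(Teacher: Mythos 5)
Your proposal is correct and follows essentially the same route as the paper: the paper's own proof is a two-line sketch (the recurrences are ``deduced from (\ref{eq:amm})'', the symmetry is trivial, and stability holds because all multiplicative factors are positive and $\leq 1$), and your ratio computations on the closed-form coefficients are exactly the calculations the paper leaves implicit. Two remarks are worth recording. First, for part ii) you correctly derive the ratio from Theorem \ref{theor:4.1} (equivalently from (\ref{eq:b40-rab}) read at $(m,n)=(m-l,m+l)$) rather than from the diagonal formula (\ref{eq:amm}), which is what the paper's proof cites for both parts even though (\ref{eq:amm}) alone contains no off-diagonal information. Second, and more substantively, your sign bookkeeping exposes a point the paper's proof passes over in silence: taken literally on the signed coefficients, the ratio coming from (\ref{eq:b46}) is
\begin{equation*}
a_{m-l-1,\,m+l+1}=-\sqrt{\frac{m-l}{m+l+1}}\;a_{m-l,\,m+l},
\end{equation*}
with a minus sign; for instance $a_{11}=T^{-\frac12}2^{-\frac32}>0$ while $a_{02}=-T^{-\frac12}2^{-2}<0$, so (\ref{eq:iter2}) as printed can only propagate the \emph{magnitudes}, the alternating factor $(-1)^l$ of Theorem \ref{theor:4.1} being appended separately --- which is precisely the reading you adopt, and it is the correct repair of the statement. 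A trivial slip: $\frac{2m+1}{2m+2}=\frac12$ at $m=0$, so these factors lie in $[\frac12,1)$ rather than $(\frac12,1)$; this is immaterial, since the stability argument only needs all factors in $(0,1]$.
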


\begin{proof}
The first set (\ref{eq:iter1}) of  formulas are deduced from (\ref{eq:amm}). The second set  (\ref{eq:iter2}) of formulas are deduced from (\ref{eq:amm}). 
The Gram matrix being symmetric, the symmetry  (\ref{eq:iter3}) is trivial.

The computational stability of the formulas is because the only operations are multiplication by positive  numbers $\leq 1$.
\end{proof}

\section{Application to truncated matrices} \label{sec:5}

Our  objective now is to modify the matrix $D^N=D_{11}^N$ such that one recovers the skew-symmetry with respect to $A^N=A_{11}^N$.
The new matrix will be denoted as
$ \overline D^N=\overline{ D}_{11}^N$ and one will typically enforce 
\begin{equation} \label{eq:maki}
A^N \overline D^{N} + (\overline D^{N} )^T A^N=0.
\end{equation}

\begin{lemma} \label{lem:maki}
Assume  the modified matrix satisfies  (\ref{eq:maki}).
Then the  solution of 
$$
\partial_ t U^N + \overline D^{N} U^N=0
$$
preserves the weighted quadratic norm, that is 
$
\frac d {dt} \left<U^N, A^N U^N \right> =0$. 
\end{lemma}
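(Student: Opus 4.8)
The plan is to run exactly the finite-dimensional analogue of the formal energy computation (\ref{eq:b29}), now with the modified matrix $\overline D^N$ in place of $D^N$. The whole statement is an energy identity, so I would not expect any genuine technical difficulty; the only points requiring care are the symmetry of $A^N$ and the elementary fact that the quadratic form built from a skew-symmetric matrix vanishes.

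First I would differentiate the weighted quadratic norm in time. Since $A^N=A_{11}^N$ is a principal submatrix of the symmetric Gram matrix $A$, it is itself symmetric, $(A^N)^T=A^N$. Using this symmetry,
$$
\frac d{dt}\left< U^N, A^N U^N\right> = \left< \partial_t U^N, A^N U^N\right> + \left< U^N, A^N \partial_t U^N\right> = 2 \left< U^N, A^N \partial_t U^N\right>.
$$
Substituting the evolution equation $\partial_t U^N = -\overline D^N U^N$ then gives
$$
\frac d{dt}\left< U^N, A^N U^N\right> = -2\left< U^N, A^N \overline D^N U^N\right>.
$$

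The remaining step is to show that the right-hand side vanishes, and here I would invoke the hypothesis (\ref{eq:maki}). Combining $A^N \overline D^N = -(\overline D^N)^T A^N$ with the symmetry of $A^N$ shows that the matrix $M=A^N \overline D^N$ is skew-symmetric, since $M^T = (\overline D^N)^T (A^N)^T = (\overline D^N)^T A^N = -A^N \overline D^N = -M$. For any skew-symmetric matrix the associated quadratic form is identically zero, so $\left< U^N, M U^N\right> = 0$, whence $\frac d{dt}\left< U^N, A^N U^N\right> = 0$, which is the claim.

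If anything qualifies as the hard part, it is conceptual rather than computational: it is the construction enforcing (\ref{eq:maki}) that makes $A^N \overline D^N$ skew-symmetric, and this is precisely the structural property that the naive truncation $D_{11}^N$ failed to possess, as recorded in Lemma \ref{Lem:3.5}. The present lemma merely certifies that, once this skew-symmetry is restored by design, the quadratic stability of the continuous model is exactly recovered at the discrete level.
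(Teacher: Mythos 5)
Your proof is correct and follows essentially the same route as the paper's: differentiate $\left<U^N, A^N U^N\right>$ using the symmetry of $A^N$, substitute $\partial_t U^N = -\overline D^N U^N$, and conclude because the quadratic form of the skew-symmetric matrix $A^N \overline D^N$ vanishes by (\ref{eq:maki}). You merely spell out the skew-symmetry check and the symmetry of $A^N$ that the paper leaves implicit, which changes nothing of substance.
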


\begin{proof}
Indeed one has 
$$
\frac d {dt} \left<U^N, A^N U^N \right> = 2 \left< U^N, A^N  \partial_t U^N \right>= -  2 \left< U^N,  A_{11}^N\overline D^{N}   U^N \right>=0
$$
since the matrix $A^N \overline D^{N}$ is skew-symmetric (\ref{eq:maki}).
\end{proof}

However it is needed to modify $D^N$ as small as possible to keep the good approximation properties of this matrix.
We  will consider two methods.

\subsection{First method} \label{sec:5.1}

The first method is more efficient than the second one. However we discovered this possibility  through numerical  explorations
which explains why it is described more as a numerical recipe rather than the application of some general principle. 
The final result of the Section is described in Theorem \ref{prop:propi}.

Consider the formal identities of a solution of (\ref{eq:b7})
\begin{equation}\label{eq:formal_indentities_with_X}
\left<U, A \frac d{dt}  U \right>+ \left<U, ADU\right>=0
\end{equation} 
where the infinite vector has finite number of moments, that is $U \in X$ where $X$ is defined in (\ref{eq:xxx}). 
More precisely assume that $U=(U_1^N, U_2^N)^{T}$ with $U_2^N = \mathbf{0}$.
Substituting $U $ into 
\eqref{eq:formal_indentities_with_X}, one obtains
$$
  \left<U_1^N, A_{11}^N \frac d{dt}  U_1^N \right>
+ \left<U_1^N, \left(A_{11}^N  D_{11}^N +A_{12}^ND_{21}^{N}\right)U_1^N\right>
= 0.
$$ 
The first modified matrix $ \overline D^N=\overline{ D}_{11}^N$ is defined such that the previous identity is a triviality for solutions of 
$\partial_ t U_1^N + \overline D_{11}^{N} U_1^N=0$.
One obtains 
$
A_{11}^N \frac d{dt}  U_1^N +\left(A_{11}^N  D_{11}^N +A_{12}^ND_{21}^{N}\right)U_1^N=0$ which yields
$$
- A_{11}^N \overline D_{11}^{N}  U_1^N +\left(A_{11}^N  D_{11}^N +A_{12}^ND_{21}^{N}\right)U_1^N=0.
$$
Requiring that it holds for all possible $U_1^N\in \mathbb R^{N+1}$ yields 
\begin{equation} \label{eq:rui1}
{\overline D}_{11}^N =D_{11}^N +\left(A_{11}^N\right)^{-1} A_{12}^ND_{21}^{N},
\end{equation}
which leads to  formal identity
$  \left<U_1^N, \frac d{dt}  U_1^N \right>
+ \left<U_1^N, {\overline D}_{11}^N U_1^N\right>
= 0$.
In fact, this formal identities 
is equivalent to the formal identities \eqref{eq:formal_indentities_with_X}, 
considering $U$ in $X$ (the space of vectors with compact support). 
This can also be interpreted as the consideration of the quadratic norm 
of \eqref{eq:quadratic_norm} being taken into account. 
The following result shows that the requirement of skew symmetry is recovered.

\begin{lemma}
The modified matrix (\ref{eq:rui1}) is skew-symmetric with respect to the Gram matrix (\ref{eq:maki}).
\end{lemma}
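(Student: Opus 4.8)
The plan is to substitute the explicit formula (\ref{eq:rui1}) into the left-hand side of (\ref{eq:maki}) and to reduce it, using only the symmetry of the Gram matrix and the already-established infinite relation $AD+D^TA=0$, to the first block equation of the system (\ref{eq:fl}). Before any computation, I would first record two standing facts that make the statement meaningful. On the one hand, the functions $\psi_0,\dots,\psi_N$ are linearly independent, so their Gram matrix $A_{11}^N$ is symmetric positive definite, hence invertible; this guarantees that $\overline D_{11}^N$ in (\ref{eq:rui1}) is well defined and that $(A_{11}^N)^{-1}$ is itself symmetric. On the other hand, the global symmetry $A=A^T$ of Definition~4 gives the block relations $(A_{11}^N)^T=A_{11}^N$ and $(A_{12}^N)^T=A_{21}^N$, which are exactly the ingredients needed to handle the transposes.

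The computation then splits into two elementary pieces. Left-multiplying (\ref{eq:rui1}) by $A_{11}^N$, the factor $A_{11}^N(A_{11}^N)^{-1}$ telescopes and leaves
\[
A_{11}^N \overline D_{11}^N = A_{11}^N D_{11}^N + A_{12}^N D_{21}^N .
\]
For the transposed term, I would transpose (\ref{eq:rui1}), using $(A_{11}^N)^{-T}=(A_{11}^N)^{-1}$ and $(A_{12}^N)^T=A_{21}^N$, and then right-multiply by $A_{11}^N$, so that again the inverse cancels against $A_{11}^N$:
\[
(\overline D_{11}^N)^T A_{11}^N = (D_{11}^N)^T A_{11}^N + (D_{21}^N)^T A_{21}^N .
\]

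Adding the two displays, the sum $A_{11}^N \overline D_{11}^N + (\overline D_{11}^N)^T A_{11}^N$ is seen to coincide \emph{exactly} with the left-hand side of the first equation of the block system (\ref{eq:fl}), namely the $(1,1)$ block of $AD+D^TA=0$; by Lemma~\ref{lem:3.2} this vanishes, which is precisely (\ref{eq:maki}). The only genuine verifications are therefore the correct placement of transposes (in particular the identity $(A_{12}^N)^T=A_{21}^N$, which is the heart of why the construction works) and the cancellation of the inverse in the transposed term; I expect this transpose bookkeeping to be the sole place where care is required, everything else being a direct reading of (\ref{eq:fl}). It is worth emphasising in the write-up that the extra correction $(A_{11}^N)^{-1}A_{12}^N D_{21}^N$ is built exactly so as to supply the previously missing cross-terms $A_{12}^N D_{21}^N$ and $(D_{21}^N)^T A_{21}^N$ that Lemma~\ref{Lem:3.5} showed were responsible for the loss of skew-symmetry under truncation.
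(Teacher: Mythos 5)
Your proposal is correct and follows essentially the same route as the paper: left-multiply (\ref{eq:rui1}) by $A_{11}^N$ to cancel the inverse, obtain $A_{11}^N \overline D_{11}^N = A_{11}^N D_{11}^N + A_{12}^N D_{21}^N$, and recognize this as skew-symmetric via the first block equation of (\ref{eq:fl}), itself a consequence of Lemma \ref{lem:3.2}. The only difference is presentational: you carry out the transpose bookkeeping (using $(A_{11}^N)^T=A_{11}^N$ and $(A_{12}^N)^T=A_{21}^N$) explicitly and record the invertibility of the Gram block, whereas the paper leaves both implicit.
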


\begin{proof}
By definition one has
$
A_{11}^N {\overline D}_{11}^N =A_{11}^N D_{11}^N +A_{12}^ND_{21}^{N}$. 
The right hand side is skew-symmetric because of the first line of (\ref{eq:fl}). Therefore one has $A_{11}^N {\overline D}_{11}^N+ \left( A_{11}^N {\overline D}_{11}^N \right)^T=0$.
\end{proof}

It remains to calculate explicitly 
the correction term 
   $\left(A_{11}^N\right)^{-1}A_{12}^ND_{21}^{N}$ for  the modified  matrix (\ref{eq:rui1}) to be  completely constructed.
   This is the purpose of the next technical results.
   For the first lemma, we consider  a matrix $X^N\in   \mathbb R^{(N+1)\times \mathbb N}$ 
 where  only the first column of the square matrix $X^N$ is non zero.
 \begin{equation} \label{eq:sxn}
X^N= \left(
\begin{array}{c|c|c|c| c}
z^N & 
0 & 0 & 0 &  \dots 
\end{array}
\right), \qquad \mbox{ where }0 , z^N\in \mathbb R^{N+1}.
\end{equation}

   \begin{lemma}
   Take  $z$ as the unique solution of $A_{11}^N z^N=g^N$ where $g^N\in \mathbb R^{N+1}$ is the first column of $A_{12}^N$.
Then one has 
$
{\overline D}^N 
= D_{11}^N + X^ND_{21}^{N}.
$
\end{lemma}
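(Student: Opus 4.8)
The plan is to reduce the claim to a single matrix identity, namely that $(A_{11}^N)^{-1} A_{12}^N D_{21}^{N} = X^N D_{21}^{N}$, since both ${\overline D}^N$ in (\ref{eq:rui1}) and the target expression share the common leading term $D_{11}^N$. The key structural input is the explicit shape of $D_{21}^N$ recorded in (\ref{eq:d12n}): this matrix has a single non-zero entry, located at its top-right corner, equal to $c := -e\sqrt{2(N+1)/T}$. I would fix this constant once and track how it propagates through the products.

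First I would compute $A_{12}^N D_{21}^{N}$ directly. Because $D_{21}^N$ reads off only the last coordinate (index $N$) of its input, scales it by $c$, and deposits it in the first row (index $0$) while annihilating everything else, only the first column of $A_{12}^N$, namely $g^N$, can contribute. One finds that $A_{12}^N D_{21}^{N}$ vanishes except on its last column, which equals $c\,g^N$; this is precisely the picture already drawn in the proof of Lemma \ref{Lem:3.5}. Multiplying on the left by $(A_{11}^N)^{-1}$ preserves this column structure, so $(A_{11}^N)^{-1} A_{12}^N D_{21}^{N}$ is again zero except on its last column, which is now $c\,(A_{11}^N)^{-1} g^N = c\,z^N$, using the defining relation $A_{11}^N z^N = g^N$.

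Next I would compute $X^N D_{21}^{N}$ straight from the definition (\ref{eq:sxn}). Since only the first column of $X^N$ is non-zero (equal to $z^N$), and $D_{21}^N$ extracts only the first row of $X^N$ and relocates the result into the last column scaled by $c$, the product $X^N D_{21}^{N}$ is zero except on its last column, which equals $c\,z^N$. Comparing the two computations yields $(A_{11}^N)^{-1} A_{12}^N D_{21}^{N} = X^N D_{21}^{N}$, and substituting into (\ref{eq:rui1}) gives the claimed identity ${\overline D}^N = D_{11}^N + X^N D_{21}^{N}$.

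There is essentially no analytic obstacle here; the proof is purely a structural bookkeeping argument. The only points demanding care are matching the single non-zero entry of $D_{21}^N$ with the first column of $A_{12}^N$, and the tacit use that $A_{11}^N$ is invertible so that $z^N$ is well defined. The latter is legitimate because $A_{11}^N$ is the Gram matrix of the linearly independent functions $\psi_0,\dots,\psi_N$ and is therefore symmetric positive definite, which justifies speaking of the \emph{unique} solution $z^N$ of $A_{11}^N z^N = g^N$.
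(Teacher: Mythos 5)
Your proposal is correct and takes essentially the same route as the paper: both arguments rest on the single non-zero top-right entry of $D_{21}^N$ from (\ref{eq:d12n}), so that right-multiplication by $D_{21}^N$ retains only a scalar multiple of the first column of the left factor, and both reduce the claim to the defining relation $A_{11}^N z^N = g^N$ (the paper merely factors the difference as $\left(A_{11}^N X^N - A_{12}^N\right) D_{21}^N = 0$ instead of computing the two products separately, a cosmetic difference). One slip of wording: in computing $X^N D_{21}^N$ the product extracts the first \emph{column} of $X^N$, not its first row, though the conclusion you state (last column equal to $c\,z^N$) is the correct one.
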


\begin{proof}
Let us study the equation 
$X^N D_{21}^N - \left(A_{11}^N\right)^{-1}A_{12}^ND_{21}^{N}=0$.
It  is equivalent to the equation
$$
\left( A_{11}^N X^N - A_{12}^N \right)D_{21}^N= 0 
\in  \mathbb R^{(N+1)\times (N+1)}.
$$
Due to the special form (\ref{eq:d12n})  of $D_{21}^{N}$, the first $N$ columns of $\left( A_{11}^N X^N - A_{12}^N \right)D_{21}^N$  vanish identically.
Moreover the last column also vanishes provided the first column of $A_{11}^N X^N - A_{12}^N $ vanishes as well.
It writes $A_{11}^N z^N - g^N =0$
which corresponds to the claim.
\end{proof}

If  $N=2M$ is even, then one can check that all coefficients  with an even index  of  $g^N$ vanish by construction.
On the other hand if  $N=2M+1$ is odd, then one can check that all coefficients  with an odd index of  $g^N$  vanish by construction.
Since one diagonal over two consecutive ones of  matrix $A_{11}^N$ vanish, this is also the case for the inverse matrix $\left( A_{11}^N\right)^{-1}$.
It explains that the vector $z^N$  has an additional structure:
\begin{equation} \label{eq:szn}
\begin{array}{llll}
\mbox{for  even  } N=2M,   &  z_{2k}^N=0 \mbox{ for all }0\leq k \leq M, \\
\mbox{for  odd  } N=2M+1, &  z_{2k+1}^N=0 \mbox{ for all }
0\leq k \leq M.
\end{array}
\end{equation}
The index of lines is counted from 0 because it is in accordance with the index of the first
moment which is 0 as well.
We split the calculation of the coefficients of the vector  in two cases depending on the parity of $N$.
   
   \subsubsection{First case $N=2M$, $M\in\mathbb{N}^+$}

\begin{proposition}\label{prop:5.3}
$
z_{2k+1}^N = \sqrt{\dfrac{(2M+1)!}{(2k+1)!}} \left(\dfrac{-1}{4^{M-k}(M-k)!}\right)$ for all $0\leq k < M$.
\end{proposition}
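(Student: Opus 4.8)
The plan is to reduce the linear system to its odd rows, and then verify the claimed closed form by a single finite-difference argument. By the structure (\ref{eq:szn}) all even entries $z_{2k}^N$ vanish, so the only unknowns are $z_1^N,z_3^N,\dots,z_{2M-1}^N$, governed by the $M$ equations coming from the odd rows $m=2i+1$ of $A_{11}^N z^N=g^N$. Since $g^N$ is the first column of $A_{12}^N$, its odd entries are $g^N_{2i+1}=a_{2i+1,N+1}=a_{2i+1,2M+1}$. The key normalisation is to append the value $z_{2M+1}:=-1$, which is exactly what the claimed formula gives at $k=M$ (indeed $\sqrt{(2M+1)!/(2M+1)!}\cdot(-1)/(4^0\,0!)=-1$); moving the right-hand side into the sum then recasts each odd equation as $\sum_{j=0}^{M}a_{2i+1,2j+1}\,z_{2j+1}^N=0$ for $i=0,\dots,M-1$. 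Because $A_{11}^N$ is a principal submatrix of the positive-definite Gram matrix $A$, its odd block is invertible and $z^N$ is unique (as already assumed), so it suffices to check that the proposed values satisfy these $M$ identities.

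Next I would substitute the closed form (\ref{eq:amn_explicit_final}) for $a_{2i+1,2j+1}$, namely $(-1)^{i-j}T^{-1/2}2^{-(2i+2j+2)-1/2}\frac{(2i+2j+2)!}{(i+j+1)!\sqrt{(2i+1)!(2j+1)!}}$, together with the candidate $z_{2j+1}^N$. The pleasant cancellation is that the $j$-dependent powers of two match exactly: the factor $2^{-2j}$ from the matrix meets the factor $4^{j}$ hidden in $1/4^{M-j}$ from $z$, so all powers of two, the temperature factor, and every $j$-independent, $i$-only term drop out of the sum into a single nonzero constant $C_i$. What remains is $C_i\sum_{j=0}^{M}(-1)^j\frac{(2i+2j+2)!}{(i+j+1)!\,(2j+1)!\,(M-j)!}$. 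Writing $\tfrac{1}{(M-j)!}=\tfrac{1}{M!}\binom{M}{j}j!$ turns the sum into $\tfrac{1}{M!}\sum_{j=0}^{M}(-1)^j\binom{M}{j}f(j)$ with $f(j)=\dfrac{j!\,(2i+2j+2)!}{(i+j+1)!\,(2j+1)!}$.

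The crux is to show $f$ is a polynomial in $j$ of degree below $M$. I would split the rising factorial $\frac{(2i+2j+2)!}{(2j+1)!}=(2j+2)(2j+3)\cdots(2j+2i+2)$ into its $i+1$ even factors, which give $2^{i+1}\frac{(j+i+1)!}{j!}$, and its $i$ odd factors, which give $\prod_{r=1}^{i}(2j+2r+1)$. The prefactor $\frac{j!}{(i+j+1)!}$ then cancels the even part exactly, leaving $f(j)=2^{i+1}\prod_{r=1}^{i}(2j+2r+1)$, a polynomial in $j$ of degree precisely $i$. Since $0\le i\le M-1<M$, the alternating binomial sum $\sum_{j=0}^{M}(-1)^j\binom{M}{j}f(j)$ is (up to sign) the $M$-th finite difference of a polynomial of degree strictly less than $M$, hence it vanishes. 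This establishes all $M$ identities, and uniqueness of $z^N$ then gives the claimed formula.

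The only genuine difficulty is the cancellation in the last paragraph; everything else is bookkeeping. I would be most careful about the even/odd factor count and the resulting degree, because an off-by-one there—degree $i=M$ rather than $i\le M-1$—would invalidate the finite-difference annihilation. This is exactly the point at which the hypothesis $k<M$, equivalently $i\le M-1$, is indispensable, and it is worth double-checking that the empty-product case $i=0$ (where $f\equiv 2$, degree $0$) and the boundary case $i=M-1$ both fall strictly below $M$.
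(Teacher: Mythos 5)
Your proof is correct and follows essentially the same route as the paper: you verify the candidate against the linear system using formula (\ref{eq:amn_explicit_final}), reduce each odd row to an alternating sum $\sum_{j=0}^{M}(-1)^{j}\binom{M}{j}f(j)$ with $f$ a polynomial in $j$ of degree $i<M$, and conclude by the vanishing of $M$-th finite differences, exactly as in Lemma \ref{lem:5.1}. Your explicit even/odd factor split yielding $f(j)=2^{i+1}\prod_{r=1}^{i}(2j+2r+1)$ is precisely the closed form of the paper's $A(k,q)$ (obtained there by the recurrence $A(k,q+1)=2(2q+2k+3)A(k,q)$), and your normalisation $z_{2M+1}=-1$ is just the paper's device of extending the sum to $k=M$.
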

\begin{proof}
The proof goes by checking the identity  $A_{11}^Nz^N =g^N$ where the coefficients of $z^N$ with odd index are given in the claim. The coefficients with even index vanish identically.

According to \eqref{eq:amn_explicit_final} in Theorem \ref{theor:4.1},  one has for $0 < p=2q+1 < 2M$ 
$$
a_{p, 2k+1} 
= (-1)^{\frac{p-(2k+1)}{2}} 
  \, T^{-\frac12}
  \, 2^{-(p+2k+1) - \frac{1}2} 
  \, \frac{(p+2k+1)!}{ (\frac{p+2k+1}{2})! \sqrt{p!(2k+1)!}}.
$$
Here  $p$ is the index of a line of the matrix $A_{11}^N$.
Using the value given in the claim, one calculates 
$$
\begin{aligned}
\sum_{k=0}^{M-1} a_{p, 2k+1} z_{2k+1}^N
=&(-1)^{\frac{p-(2M+1)}{2}} 
  \, T^{-\frac12}
  \, 2^{-(p+2M+1) - \frac{1}2} \\
 & \times \sum_{k=0}^{M} (-1)^{M-k+1} \dbinom{p+2k+1}{p} 
  \dbinom{(p+2M+1)/2}{(p+2k+1)/2} \\
 &\times \frac{1}{\sqrt{p!(2M+1)!}}\frac{p!(2M+1)!}{ (\frac{p+2M+1}{2})! }.\\
\end{aligned}
$$
Then using Lemma \ref{lem:5.1}, one has 
$$
\begin{aligned}
\sum_{k=0}^{M-1} a_{p, 2k+1} z_{2k+1}^N
=&(-1)^{\frac{p-(2M+1)}{2}} 
  \, T^{-\frac12}
  \, 2^{-(p+2M+1) - \frac{1}2} \\
 &\times \dbinom{p+2M+1}{p} 
  \frac{1}{\sqrt{p!(2M+1)!}}\frac{p!(2M+1)!}{ (\frac{p+2M+1}{2})! }.\\
=&(-1)^{\frac{p-(2M+1)}{2}} 
  \, T^{-\frac12}
  \, 2^{-(p+2M+1) - \frac{1}2} 
  \frac{(p+2M+1)!}{ (\frac{p+2M+1}{2})! \sqrt{p!(2M+1)!}}\\
=&a_{p, 2M+1}.
\end{aligned}
$$
That is $\sum_{k=0}^{M-1} a_{p, 2k+1} z_{2k+1}^N=a_{p, 2M+1}$ for all $0< p< 2M$. 
Note that the coefficients $a_{p, 2M+1}$ for all $0< p < 2M$ are those of $g^N$ (one coefficient over two consecutive ones).
 Therefore
the coefficients given in the claim are exactly the coefficients of $z^N$. 
\end{proof}

To show the following purely technical Lemma \ref{lem:5.1} needed in the above proof, we define
$$
S(N,p)
= \sum_{k=0}^{M-1} (-1)^{M-k+1} \dbinom{p+2k+1}{p} 
  \dbinom{(p+2M+1)/2}{(p+2k+1)/2}
  $$
  where $ N=2M$ is even and   $p=2q+1$ (for all $0\leq q  <M$) is odd.

\begin{lemma}\label{lem:5.1}
$
S(N,p)
= \dbinom{p+2M+1}{p} 
$.
\end{lemma}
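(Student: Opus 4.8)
The plan is to reverse the order of summation so that the target value $\binom{p+2M+1}{p}$ emerges as a boundary term, after which the statement reduces to the vanishing of a single alternating binomial sum that I will evaluate by a short generating-function computation.

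First I would set $L=(p+2M+1)/2$, which is an integer since $p$ is odd, and substitute $i=M-k$. Then $p+2k+1=2L-2i$, $(p+2k+1)/2=L-i$, and $(-1)^{M-k+1}=(-1)^{i+1}$, so that, using $\binom{L}{L-i}=\binom{L}{i}$,
$$
S(N,p)=\sum_{i=1}^{M}(-1)^{i+1}\binom{2L-2i}{p}\binom{L}{i}.
$$
Introducing the auxiliary sum $T=\sum_{i=0}^{L}(-1)^{i}\binom{L}{i}\binom{2L-2i}{p}$, with the convention $\binom{n}{p}=0$ for $0\le n<p$, I observe two things: its $i=0$ term equals $\binom{2L}{p}=\binom{p+2M+1}{p}$, and its terms with $i\ge M+1$ vanish because there $2L-2i<p$. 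Hence $T=\binom{2L}{p}-S(N,p)$, and it suffices to prove $T=0$.

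To evaluate $T$ I would write $\binom{2L-2i}{p}=[x^{p}](1+x)^{2L-2i}$, interchange the finite sum with coefficient extraction, and apply the binomial theorem to the factor $\sum_{i}\binom{L}{i}\bigl(-(1+x)^{-2}\bigr)^{i}$:
$$
\sum_{i=0}^{L}(-1)^{i}\binom{L}{i}(1+x)^{2L-2i}
=(1+x)^{2L}\bigl(1-(1+x)^{-2}\bigr)^{L}
=(x^{2}+2x)^{L}.
$$
This is a genuine polynomial identity in $x$, so extracting the coefficient of $x^{p}$ yields $T=[x^{p}]\,x^{L}(x+2)^{L}$. Since $x^{L}(x+2)^{L}$ contains no monomial of degree below $L$, this coefficient is zero as soon as $p<L$.

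The one point requiring care—and the main obstacle—is verifying $p<L$, for which the parity and range constraints on $p$ are exactly what is needed. Here $L-p=M+\tfrac{1-p}{2}$, and because $S(N,p)$ is defined only for odd $p=2q+1$ with $0\le q<M$, one has $p\le 2M-1$, so $L-p\ge 1>0$. With $T=0$ established, the relation $T=\binom{2L}{p}-S(N,p)$ gives $S(N,p)=\binom{p+2M+1}{p}$, as claimed. The delicate work is thus the bookkeeping: keeping the parity of $p$ and its admissible range straight so that the degree bound $p<L$ applies, and confirming that the terms dropped in passing between $T$ and the finite sum genuinely vanish under the convention for binomial coefficients.
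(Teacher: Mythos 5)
Your proof is correct, but it evaluates the key alternating sum by a different mechanism than the paper. Both arguments start with the same structural move of completing the sum with the boundary term $\binom{p+2M+1}{p}$ (you do this by reindexing $i=M-k$ and adjoining $i=0$; the paper adds and subtracts the $k=M$ term), so in both cases the lemma reduces to showing that a full alternating binomial sum vanishes. The paper then writes the summand as $(-1)^k P(N,p,k)\binom{M}{k}$, proves by an induction on $q$ that $P(N,p,k)$ is a polynomial in $k$ of degree $q=(p-1)/2<M$, and invokes the finite-difference identity $\sum_{k=0}^{M}(-1)^k k^r\binom{M}{k}=0$ for $r<M$. You instead extract the coefficient via $\binom{2L-2i}{p}=[x^p](1+x)^{2L-2i}$ and collapse the sum by the binomial theorem to the closed form $\sum_{i=0}^{L}(-1)^i\binom{L}{i}(1+x)^{2L-2i}=\bigl((1+x)^2-1\bigr)^L=x^L(x+2)^L$, so the vanishing is the transparent degree bound $p<L$, which you correctly trace back to the hypotheses ($p$ odd makes $L=(p+2M+1)/2$ an integer, and $q<M$ gives $L-p=M-q\geq 1$); your check that the adjoined terms $M+1\leq i\leq L$ genuinely vanish ($2L-2i\leq p-1<p$) is also needed and correct. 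Your route buys a shorter and more self-contained proof: it replaces the paper's inductive polynomiality argument for $P(N,p,k)$ with a one-line polynomial identity, and it adapts verbatim to the even-$p$ companion statement (the paper's Lemma \ref{lem:techi2}, whose proof is omitted there as ``similar''), since only the parity bookkeeping in $L$ changes. What the paper's finite-difference approach buys in exchange is an explicit view of the summand's polynomial structure in $k$, which is the standard hypergeometric-style normal form for such sums; the two methods are of essentially equal strength here.
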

\begin{proof}
One checks the identity
$$
S(N,p)
=\sum_{k=0}^{M} (-1)^{M-k+1} \dbinom{p+2k+1}{p} 
  \dbinom{(p+2M+1)/2}{(p+2k+1)/2}
+ \dbinom{p+2M+1}{p}
$$
rewritten as
\begin{equation} \label{eq:snp}
S(N,p)
=\sum_{k=0}^{M} (-1)^{k} {P}(N,p,k) \dbinom{M}{k} 
+ \dbinom{p+2M+1}{p}
\end{equation}
where $P(N,p,k)$ is defined as
$$
\begin{array}{rrccc}
{P}(N,p,k)
=&(-1)^{M+1}  \dfrac{ ((p+2M+1)/2)! }{p!M!} &\times & 
  \dfrac{ (p+2k+1)!   k!}{ (2k+1)!  \left( (p+2k+1)/2 \right)!} \\
  =& C(N,p)& \times & \dfrac{ (2q+2k+2)! k!  }{ (2k+1)!  \left( q+k+1 \right)!}.
  \end{array}
$$
By direct expansion, one checks  $P(N,p,k)$ 
can be written as  a polynomial with respect to the variable $k$. 
To show this fact,  define $A(k,q)= \dfrac{ (2q+2k+2)! k!  }{ (2k+1)!  \left( q+k+1 \right)!}$.
It is clear that $A(k,0)=2$. It is also clear that
$$
A(k,q+1)= \frac{(2q+2k+4)(2q+2k+3)}{q+k+2}  A(k,q)= 2(2q+2k+3) A(k,q).
$$
By iteration, one has that $A(k,q)$ is a polynomial in $k$ of degree $q$. So ${P}(N,p,k)$ is also a polynomial in $k$ of degree $q=(p-1)/2<M$.

On the other hand, one has the general identity for all degrees $r <M$.
$$
\sum_{k=0}^{M} (-1)^{k} k^r \dbinom{M}{k} = 0,
\quad r < M.
$$

Since ${P}(N,p,k)$ is  a polynomial in $k$ of the convenient degree, then the sum in (\ref{eq:snp}) vanishes, which ends the proof.
\end{proof}

\subsubsection{Second case $N=2M-1$, $M\in\mathbb{N}^+$}   

The analysis is very similar to the first case.

\begin{proposition}\label{prop:5.4}
$
z_{2k}^N = \sqrt{\dfrac{(2M)!}{(2k)!}} \left(\dfrac{-1}{4^{M-k}(M-k)!}\right)
$ for all $0\leq k < M$.
\end{proposition}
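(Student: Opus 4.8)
The plan is to mirror exactly the argument used for Proposition \ref{prop:5.3}, exploiting the parity symmetry between the two cases. Since $N=2M-1$ is odd, the structure relation (\ref{eq:szn}) tells us that the \emph{odd}-indexed entries of $z^N$ vanish, so only the even-indexed coefficients $z_{2k}^N$ are unknown. I would therefore verify directly that the values claimed for $z_{2k}^N$ solve the defining system $A_{11}^N z^N = g^N$, where $g^N$ is the first column of $A_{12}^N$. With $N=2M-1$ one has $N+1=2M$, so the components of $g^N$ are $g^N_p = a_{p,2M}$ for $0 \le p \le 2M-1$; these vanish for odd $p$ and are generically nonzero for even $p=2q$. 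For odd $p$ the corresponding equation is trivially $0=0$ because $a_{p,2k}=0$ whenever $p$ is odd, so it suffices to check the rows with even index $p=2q$, $0\le q<M$.

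For such a row I would substitute the explicit formula (\ref{eq:amn_explicit_final}) for $a_{2q,2k}$ together with the claimed value of $z_{2k}^N$, and simplify. After collecting the prefactors (the power of $T^{-\frac12}$, the power of $2$, and the sign), the power of two telescopes to $2^{-(2q+2M)-\frac12}$ and the residual sign reduces to an overall $(-1)^{q+1}$ times an alternating factor in $k$, exactly as in the proof of Proposition \ref{prop:5.3}. What remains is a purely combinatorial sum, and everything hinges on establishing the even-index analogue of Lemma \ref{lem:5.1}, namely
$$
\sum_{k=0}^{M} (-1)^k \binom{M}{k} \frac{(2q+2k)!\,k!}{(q+k)!\,(2k)!} = 0,
\qquad 0\le q<M.
$$
Once this identity is in hand, recognizing the $k=M$ contribution as the required right-hand side $a_{2q,2M}$ completes the verification that $\sum_{k=0}^{M-1} a_{2q,2k}\, z_{2k}^N = g^N_p$.

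To prove the displayed combinatorial identity I would reuse the polynomial-degree mechanism of Lemma \ref{lem:5.1}. Writing $\frac{1}{(M-k)!}=\frac{\binom{M}{k}k!}{M!}$ recasts the sum into $\frac{1}{M!}\sum_{k=0}^{M}(-1)^k\binom{M}{k} B(k,q)$ with $B(k,q)=\frac{(2q+2k)!\,k!}{(q+k)!\,(2k)!}$. I would then show by induction on $q$ that $B(k,q)$ is a polynomial in $k$ of degree $q$: one checks $B(k,0)=1$ and, using $2q+2k+2=2(q+k+1)$, obtains the recurrence $B(k,q+1)=2(2q+2k+1)\,B(k,q)$, which raises the degree by one at each step. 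Since $q<M$, the classical vanishing $\sum_{k=0}^{M}(-1)^k k^r \binom{M}{k}=0$ for $r<M$ forces the alternating sum of $B(k,q)$ to vanish, giving the identity.

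The main obstacle is therefore this combinatorial identity: the bookkeeping in reducing $\sum_k a_{2q,2k}\,z_{2k}^N$ to the clean binomial form is error-prone, and one must keep careful track of the shifted factorials and of the parity conventions so that the $k=M$ term is correctly isolated as $a_{2q,2M}$. Beyond that, the argument is a routine transcription of the even case, with the roles of odd and even indices interchanged, so no genuinely new idea is needed once the polynomial-degree lemma has been adapted.
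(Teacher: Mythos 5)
Your proposal is correct and follows exactly the route the paper intends: it verifies $A_{11}^N z^N = g^N$ row by row on the even-indexed rows, reduces to the even-index analogue of Lemma \ref{lem:5.1} (your vanishing sum is equivalent, after isolating the $k=M$ term, to the paper's Lemma \ref{lem:techi2}), and proves it with the same polynomial-degree mechanism, with your $B(k,q)$ satisfying $B(k,0)=1$ and $B(k,q+1)=2(2q+2k+1)B(k,q)$ in perfect parallel to the paper's $A(k,q)$. In effect you have supplied the details that the paper omits when it states that the proof ``is similar to that of Proposition \ref{prop:5.3}'' and that of Lemma \ref{lem:5.1}.
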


\begin{proof}
The proof is similar to that of Proposition \ref{prop:5.3}. It relies on the technical Lemma \ref{lem:techi2}.
\end{proof}

Define
$$
S(N,p)
= \sum_{k=0}^{M-1} (-1)^{M-k+1} \dbinom{p+2k}{p} 
  \dbinom{(p+2M)/2}{(p+2k)/2}
$$
where $N=2M-1$ is odd
and $p=2q$ (for all $0\leq q <M$) is even.

\begin{lemma} \label{lem:techi2}
$S(N,p)
= \dbinom{p+2M}{p} 
$.
\end{lemma}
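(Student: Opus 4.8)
The plan is to mirror exactly the proof of Lemma \ref{lem:5.1}, since the present statement is the even-index counterpart of that odd-index identity. First I would extend the summation range from $0\le k\le M-1$ to $0\le k\le M$ by adding and subtracting the $k=M$ term. Since $\binom{(p+2M)/2}{(p+2M)/2}=1$ and $(-1)^{M-M+1}=-1$, that extra term equals $-\binom{p+2M}{p}$, so one obtains
\[
S(N,p)=\sum_{k=0}^{M}(-1)^{M-k+1}\binom{p+2k}{p}\binom{(p+2M)/2}{(p+2k)/2}+\binom{p+2M}{p}.
\]
It then suffices to prove that the extended sum over $0\le k\le M$ vanishes identically.

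Next I would recast the extended sum in the form $\sum_{k=0}^{M}(-1)^{k}P(N,p,k)\binom{M}{k}$. Writing out the two binomials with $(p+2M)/2=q+M$ and $(p+2k)/2=q+k$, then dividing by $\binom{M}{k}=M!/(k!(M-k)!)$, the factor $(M-k)!$ cancels and one is left with
\[
P(N,p,k)=(-1)^{M+1}\frac{((p+2M)/2)!}{p!\,M!}\times\frac{(2q+2k)!\,k!}{(2k)!\,(q+k)!}.
\]
This is the direct analogue of the quantity $P(N,p,k)$ from Lemma \ref{lem:5.1}, obtained by shifting all the odd indices $2k+1,\,p+2k+1,\,(p+2k+1)/2$ down to their even counterparts $2k,\,p+2k,\,(p+2k)/2$.

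The decisive step is to show that $P(N,p,k)$ is a polynomial in $k$ of degree $q=p/2<M$. I would isolate the only $k$-dependent factor, $B(k,q)=\frac{(2q+2k)!\,k!}{(2k)!\,(q+k)!}$, and argue by induction on $q$: the base case is $B(k,0)=1$, and a short cancellation gives the recursion $B(k,q+1)=2(2q+2k+1)\,B(k,q)$, so that $B(k,q)$—hence $P(N,p,k)$—is a polynomial in $k$ of degree $q$. The main obstacle is simply getting this base case and recursion right (here $B(k,0)=1$, in contrast to the value $A(k,0)=2$ of the odd case); everything else is formal. Once the degree bound $q<M$ is in hand, the general finite-difference identity $\sum_{k=0}^{M}(-1)^{k}k^{r}\binom{M}{k}=0$ for $r<M$ forces the extended sum to vanish, leaving $S(N,p)=\binom{p+2M}{p}$, which is the claim.
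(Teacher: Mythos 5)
Your proposal is correct and follows exactly the route the paper intends: the paper omits this proof as ``similar to that of Lemma \ref{lem:5.1}'', and you carry out precisely that analogy — extending the sum to $k=M$ at the cost of $+\binom{p+2M}{p}$, reducing to $P(N,p,k)=(-1)^{M+1}\frac{((p+2M)/2)!}{p!\,M!}\,B(k,q)$ with $B(k,q)=\frac{(2q+2k)!\,k!}{(2k)!\,(q+k)!}$, and killing the extended sum via the finite-difference identity. Your base case $B(k,0)=1$ and recursion $B(k,q+1)=2(2q+2k+1)B(k,q)$ are both correct (the cancellation $\frac{(2q+2k+2)(2q+2k+1)}{q+k+1}=2(2q+2k+1)$ checks out), so $P$ has degree $q=p/2<M$ in $k$ as required.
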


\begin{proof}
The proof is similar to that of Lemma \ref{lem:5.1}, so it is omitted.
\end{proof}

\subsubsection{Final result}

\begin{Theorem} \label{prop:propi}
The modified matrix (\ref{eq:rui1}) can be written as  
$$
{\overline D}^N 
= D_{11}^N -e \sqrt{\frac{2(N+1)}{T}}
Y^N
$$
where the last matrix is 
$
Y^N= \left(
\begin{array}{c|c|c|c| c}
0 & 
0 & \dots  & 0 & z^N 
\end{array}
\right)\in \mathbb R^{(N+1)\times (N+1)}$ and $ 0 , z^N \in \mathbb R^{N+1}$.

For $N=2M$ even, the first (equation for mass) and third line (equation for kinetic energy) vanish.
For $N=2M+1$ odd, the second line (impulse) vanish.
\end{Theorem}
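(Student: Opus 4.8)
The plan is to start from the preceding lemma, which already expresses the modified matrix as $\overline{D}^N = D_{11}^N + X^N D_{21}^N$ with $X^N$ the matrix whose only nonzero column is the first one, equal to $z^N$, as in (\ref{eq:sxn}). Everything then reduces to computing the product $X^N D_{21}^N$ explicitly and reading off its structure, so the statement should follow essentially by inspection once the two sparse factors are multiplied.

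First I would exploit the extreme sparsity of the two factors. By (\ref{eq:d12n}), the block $D_{21}^N$ has a single nonzero entry, located in its first row and last column, equal to $-e\sqrt{2(N+1)/T}$. Since $X^N$ carries its data only in its first column $z^N$, the matrix product picks out exactly this one entry: writing $(X^N D_{21}^N)_{ij} = \sum_k (X^N)_{ik}(D_{21}^N)_{kj}$ and using that $(X^N)_{ik}$ is nonzero only for $k=0$ while $(D_{21}^N)_{kj}$ is nonzero only for $(k,j)=(0,N)$, one obtains $(X^N D_{21}^N)_{ij} = -e\sqrt{2(N+1)/T}\, z^N_i\, \delta_{jN}$. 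In other words $X^N D_{21}^N$ is supported on its last column only, where it equals $-e\sqrt{2(N+1)/T}\,z^N$. This is precisely $-e\sqrt{2(N+1)/T}\,Y^N$ with $Y^N$ as in the statement, which establishes the announced formula for $\overline{D}^N$.

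It remains to read off which rows of the correction vanish, and here I would simply invoke the parity structure (\ref{eq:szn}) of $z^N$ established through Propositions \ref{prop:5.3} and \ref{prop:5.4}. Since the correction modifies row $i$ only through the scalar $z^N_i$ placed in the last column, a given row of $\overline{D}^N$ coincides with the corresponding row of $D_{11}^N$ exactly when $z^N_i = 0$. For even $N=2M$, (\ref{eq:szn}) gives $z^N_{2k}=0$, so the rows of even index are unchanged; in particular row $0$ (mass) and row $2$ (kinetic energy) are left untouched. For odd $N=2M+1$, (\ref{eq:szn}) gives $z^N_{2k+1}=0$, so the rows of odd index are unchanged; in particular row $1$ (impulse) is left untouched. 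Matching rows $0,1,2$ to the conservation laws for mass, momentum and kinetic energy gives the claimed statements.

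The computation carries no real difficulty: the genuine work was already done upstream in the determination of $z^N$ and of its parity pattern (Propositions \ref{prop:5.3}--\ref{prop:5.4} together with (\ref{eq:szn})). The only points requiring care are bookkeeping ones, namely tracking the single surviving index in the product of two very sparse matrices and correctly aligning the row indices $0,1,2$ with the physical moments, so I do not anticipate any serious obstacle beyond these.
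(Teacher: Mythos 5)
Your proposal is correct and takes essentially the same route as the paper, whose own two-line proof likewise deduces the formula from the preceding lemma ${\overline D}^N = D_{11}^N + X^N D_{21}^N$ combined with the sparsity structures (\ref{eq:d12n}) and (\ref{eq:sxn}) and the parity pattern (\ref{eq:szn}). You merely make explicit the single-entry matrix product $(X^N D_{21}^N)_{ij} = -e\sqrt{2(N+1)/T}\, z^N_i\, \delta_{jN}$ that the paper leaves implicit, which is sound bookkeeping rather than a different argument.
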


\begin{proof}
It comes from the structure (\ref{eq:d12n}) of $D_{12}^N$ and the structure (\ref{eq:sxn}) of $X^N$.
The coefficients vanish accordingly to (\ref{eq:szn}).
\end{proof}

In addition, according to Proposition \ref{prop:5.3} and \ref{prop:5.4}, 
we have
$$
z_{1} = \sqrt{(2M+1)!} \left(\dfrac{-1}{4^{M}(M)!}\right).
$$
and 
$$
z_{0} = \sqrt{(2M)!} \left(\dfrac{-1}{4^{M}(M)!}\right), \qquad 
z_{2} = \sqrt{\dfrac{(2M)!}{(2)!}} \left(\dfrac{-1}{4^{M-1}(M-1)!}\right).
$$
When $M$ takes a relatively large value, 
these corresponding values can become notably small, 
thereby allowing (up to arbitrary precision of course) for simultaneous conservation of mass, momentum, and energy.
For example, with $M=20$, we observe $z_0 = 3.38 \times 10^{-7}$, $z_2 = 1.9102 \times 10^{-5}$ and $z_1 = 2.162 \times 10^{-6}$.

\begin{Remark} \label{rem:remi}
In practice one can as well neglect very small coefficients. For example one can nullify $z_1$ or the pair $z_0,z_2$.
This will be used in one of the numerical tests.
\end{Remark}

\subsection{Second method} \label{sec:penal}

The second method uses a natural penalization technique with a parameter $\varepsilon>0$ and is much simpler.
The modified matrix is now defined as 
\begin{equation} \label{eq:maki2}
\begin{array}{lll}
  \overline D^N &=&  D^N -\frac12 (A^N+\varepsilon I^N)^{-1}\left( (A^N+\varepsilon I^N) D^N+ (D^N)^T (A^N+\varepsilon I^N)   \right) \\
  & = & \frac12 D^N -\frac12 (A^N+\varepsilon I^N)^{-1}  (D^N)^T (A^N+\varepsilon I^N).
  \end{array}
\end{equation}
The penalization term $\varepsilon I^N$ helps to calculate the inverse matrix $(A^N+\varepsilon I^N)^{-1}$ because we have observed that  the condition number of the matrix $A^N$ blows up as $N$ increases.
The drawback of the second method with respect to the first one is that it incorporates an additional source of approximation through the penalization parameter.

\begin{lemma} \label{lem:5.8}
Assume (\ref{eq:maki2}).
Then the  solution of 
$$
\partial_ t U^N + \overline D^{N} U^N=0
$$
preserves the weighted quadratic norm with penalization, that is 
$$
\frac d {dt}\left(  \left<U^N, A^N U^N \right> + \varepsilon \| U^N\|^2\right) =0.
$$
\end{lemma}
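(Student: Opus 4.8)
The plan is to follow exactly the pattern of Lemma~\ref{lem:maki}, the only difference being that the role of the Gram matrix is now played by the penalized matrix. First I would set $B^N := A^N + \varepsilon I^N$ and observe that, since $A^N$ is symmetric and $I^N$ is the identity, $B^N$ is symmetric as well. The point of the penalization in the statement is precisely that the conserved quantity rewrites compactly as
$$
\langle U^N, A^N U^N\rangle + \varepsilon \|U^N\|^2 = \langle U^N, B^N U^N\rangle,
$$
so that everything reduces to showing that the $B^N$-quadratic form is preserved along trajectories.

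Next I would verify that the modified matrix is skew-symmetric with respect to $B^N$, i.e. $B^N \overline D^N + (\overline D^N)^T B^N = 0$. Multiplying the second expression in (\ref{eq:maki2}) on the left by $B^N$ gives
$$
B^N \overline D^N = \tfrac12 B^N D^N - \tfrac12 (D^N)^T B^N,
$$
and taking the transpose of this identity, using $(B^N)^T = B^N$, immediately yields $(B^N \overline D^N)^T = -B^N \overline D^N$. This is the whole content of the penalization trick: the definition (\ref{eq:maki2}) is engineered so that the $B^N$-skew-symmetric part of $D^N$ is kept while its $B^N$-symmetric part is removed, exactly as in the construction of Lemma~\ref{lem:maki} but with $A^N$ replaced by $B^N$.

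Finally I would differentiate the conserved quantity. Using the symmetry of $B^N$ together with the evolution equation $\partial_t U^N = -\overline D^N U^N$,
$$
\frac{d}{dt}\langle U^N, B^N U^N\rangle = 2\langle B^N U^N, \partial_t U^N\rangle = -2\langle U^N, B^N \overline D^N U^N\rangle,
$$
and the skew-symmetry of $B^N \overline D^N$ makes the last quadratic form vanish, which closes the argument.

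As for difficulty, there is essentially no obstacle: the statement is a verbatim transcription of Lemma~\ref{lem:maki} with the Gram matrix $A^N$ replaced by $A^N + \varepsilon I^N$, and the only thing to check is that the explicit formula (\ref{eq:maki2}) really produces a $B^N$-skew-symmetric matrix, which is the one-line transpose computation above relying solely on the symmetry of $A^N$. The sole mild point to keep in mind is that $(A^N + \varepsilon I^N)^{-1}$ must exist for $\overline D^N$ to be well defined; this is guaranteed for $\varepsilon>0$ since the penalization shifts the spectrum of the symmetric matrix $A^N$ away from zero, which is precisely the stated motivation for introducing $\varepsilon$.
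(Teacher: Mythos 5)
Your proof is correct and follows essentially the same route as the paper, whose proof is simply the remark that the argument of Lemma~\ref{lem:maki} carries over with $A^N$ replaced by $A^N+\varepsilon I^N$. You additionally spell out the one step the paper leaves implicit --- the verification via the second line of (\ref{eq:maki2}) that $(A^N+\varepsilon I^N)\overline D^N$ is skew-symmetric --- which is a correct and welcome completion of the same argument.
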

\begin{proof}
Similar as proof of Lemma \ref{lem:maki}.
\end{proof}

%
%
%
%
%
%
%
%
%

\section{Generalization to $\partial_t f + v \partial_x f=0$} \label{sec:6}

Consider in 1D the equation 
\begin{equation} \label{eq:mlo}
\partial_t f (t,x,v)+ v\partial_x f(t,x,v)=0.
\end{equation}
This equation is usually  a building block  in a splitting strategy used for the numerical discretization of a classical Vlasov equation such as
\begin{equation} \label{eq:vlasov-poisson}
\partial_t f +\mathbf v \cdot \nabla_x f + (\mathbf E+\mathbf v \times \mathbf B)\cdot \nabla_v f=0.
\end{equation}
The other building block is the transport equation (\ref{eq:sp}).

Discretization of the model problem  (\ref{eq:mlo}) with the method of moment  yields the infinite differential system 
\begin{equation} \label{eq:muki1}
\partial_t U+ B\partial_x U=0
\end{equation}
where $B=(b_{nm})\in \mathbb R^{\mathbb N\times \mathbb N}$ is defined by its coefficients 
\begin{equation} \label{eq:bnm}
b_{nm}=T^\frac12 \left( \sqrt{\frac{m+1}2}\delta_{m+1,n}+ \sqrt{\frac{m}2}\delta_{m-1,n} \right).
\end{equation}

\begin{lemma} \label{lem:anorm}
Formal solutions to (\ref{eq:muki1}) satisfy  the conservation of two quadratic norms
 $\frac d{dt}\int_x  \| U\|^2=0$ and $
\frac d{dt}\int _x \left< U, AU \right>=0 $.
\end{lemma}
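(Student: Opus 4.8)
The plan is to reduce both conservation laws to two purely algebraic symmetry properties of the constant matrix $B$, combined with a single integration by parts in the space variable $x$. The unifying observation is the following: if $G$ is any constant symmetric matrix such that $GB$ is also symmetric, then a formal solution of \eqref{eq:muki1} satisfies $\frac{d}{dt}\langle U, GU\rangle = -\partial_x\langle U, GBU\rangle$, so that integrating in $x$ (with periodic boundary conditions, or decay at infinity) makes the right-hand side vanish. I will then apply this with $G = I$ for the first norm and with $G = A$ for the second, the only remaining task being to check the symmetry of $B$ and of $AB$ respectively.

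To obtain the intermediate identity, I first differentiate: since $G = G^T$, one has $\frac{d}{dt}\langle U, GU\rangle = 2\langle GU, \partial_t U\rangle$, and substituting $\partial_t U = -B\partial_x U$ from \eqref{eq:muki1} gives $\frac{d}{dt}\langle U, GU\rangle = -2\langle GU, B\partial_x U\rangle = -2\langle U, GB\partial_x U\rangle$, again by symmetry of $G$. When $GB$ is symmetric the quadratic form telescopes, $\langle U, GB\partial_x U\rangle = \tfrac12\partial_x\langle U, GBU\rangle$, which is the claimed identity. Taking $G = I$ requires only $B = B^T$; this is immediate from the explicit formula \eqref{eq:bnm}, where the two Kronecker symbols exchange roles under $m \leftrightarrow n$ and produce the same coefficient, so $B$ is a symmetric tridiagonal matrix and the conservation of $\int_x \|U\|^2$ follows.

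The main step is the symmetry of $AB$, which is the exact analogue of Lemma \ref{lem:3.2}: there $AD$ was skew-symmetric because $e\partial_v$ is anti-self-adjoint on $L^2(\mathbb R)$, whereas here multiplication by $v$ is self-adjoint, so I expect $AB$ to be symmetric. To make this precise I would use the reconstruction operator $\Pi$ together with the fact that $B$ represents multiplication by the velocity, namely $\Pi(BU) = v\,\Pi U$; this is the analogue of \eqref{eq:b25} and follows from the three-term recurrence $v\psi_m = T^{1/2}\big(\sqrt{(m+1)/2}\,\psi_{m+1} + \sqrt{m/2}\,\psi_{m-1}\big)$ encoded in \eqref{eq:bnm}, itself a restatement of the classical Hermite recurrence. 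Then, writing $f = \Pi U$ and $g = \Pi V$ and using \eqref{eq:b26}, I obtain $\langle ABU, V\rangle = \int_\mathbb R \Pi(BU)\,g\,dv = \int_\mathbb R v f g\,dv$, and the right-hand side is manifestly invariant under the exchange $(U,f)\leftrightarrow(V,g)$; hence $\langle ABU, V\rangle = \langle ABV, U\rangle = \langle U, ABV\rangle$ for all $U,V \in X$, which is precisely $AB = (AB)^T$.

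Applying the intermediate identity with $G = A$ then yields $\frac{d}{dt}\langle U, AU\rangle = -\partial_x\langle U, ABU\rangle$, and integration in $x$ gives the second conservation law. The only genuine obstacle is the symmetry of $AB$; once the operator interpretation $\Pi(BU) = vf$ is in hand it is a two-line computation, so the bulk of the verification is bookkeeping. I would keep everything at the formal level justified for $U \in X$ (compact support in the moment index), exactly as in Section \ref{sec:2}, which legitimizes the integration by parts and the manipulation of the doubly infinite matrices.
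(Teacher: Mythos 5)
Your proof is correct, but it runs in the opposite logical direction from the paper's. The paper disposes of this lemma with the single word ``Evident'': the intended argument is at the PDE level, since (\ref{eq:muki1}) is the moment form of $\partial_t f + v\partial_x f=0$, and this transport equation conserves $\iint g^2\,dx\,dv$ both for $g=f$ and for $g=e^{v^2/2T}T^{1/4}f$ (multiplication by a $v$-dependent weight commutes with $\partial_t+v\partial_x$); by (\ref{eq:b26}) and (\ref{eq:b3}) these two integrals are exactly $\int_x\left<U,AU\right>$ and $\int_x\|U\|^2$. The paper then uses the lemma to \emph{deduce} the subsequent Corollary ($B^T=B$ and $AB=BA$), remarking only that a detailed proof could follow the method of Lemma \ref{lem:3.2}. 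You invert this order: you first prove the algebraic facts --- $B=B^T$ directly from (\ref{eq:bnm}), and $(AB)^T=AB$ via the reconstruction operator and the identity $\Pi(BU)=v\,\Pi U$, which is precisely the Lemma \ref{lem:3.2}-style argument the paper alludes to --- and then derive both conservation laws from the identity $\frac{d}{dt}\left<U,GU\right>=-\partial_x\left<U,GBU\right>$, valid for constant symmetric $G$ with $GB$ symmetric, applied with $G=I$ and $G=A$. All your steps check out: the three-term recurrence $v\psi_m = T^{1/2}(\sqrt{(m+1)/2}\,\psi_{m+1}+\sqrt{m/2}\,\psi_{m-1})$ with the correct normalization constants, the telescoping $\left<U,GB\partial_xU\right>=\tfrac12\partial_x\left<U,GBU\right>$, and the restriction to $U\in X$, which is legitimate here because $B$ is tridiagonal so $BU$ remains compactly supported and (\ref{eq:b26}) applies. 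What the paper's route buys is brevity and a transparent physical interpretation of the two conserved quantities; what yours buys is a self-contained algebraic proof that makes the Corollary immediate rather than a consequence of the lemma --- if one adopts your order, the Corollary's proof should then cite your direct computation of $AB=(AB)^T$ instead of the lemma, which cleanly avoids any circularity.
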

\begin{proof}
Evident.
\end{proof}

\begin{Corollary}
The matrix $B$ is symmetric  $B^T=B$ and is symmetric with respect to the Gram matrix $AB=BA$. 
\end{Corollary}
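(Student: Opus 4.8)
The plan is to mirror the argument of Lemma \ref{lem:3.2}, but with the skew-adjoint operator $\partial_v$ replaced by the \emph{self-adjoint} operator ``multiplication by $v$'', which is exactly the operator that $B$ encodes. I would treat the two assertions separately, establishing $B^T=B$ by direct inspection and then $AB=BA$ through the operator interpretation of $B$ combined with the characterization (\ref{eq:b26}) of $A$.

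First I would verify $B^T=B$ straight from the coefficients (\ref{eq:bnm}). The matrix $B$ is tridiagonal with vanishing main diagonal: its only nonzero entries are $b_{m+1,m}=T^{\frac12}\sqrt{(m+1)/2}$ on the first sub-diagonal and $b_{m-1,m}=T^{\frac12}\sqrt{m/2}$ on the first super-diagonal. Evaluating (\ref{eq:bnm}) at $(n,m)\leftarrow(m,m+1)$ gives $b_{m,m+1}=T^{\frac12}\sqrt{(m+1)/2}=b_{m+1,m}$, so each sub-diagonal entry matches the reflected super-diagonal entry and hence $B=B^T$. This step is purely computational and presents no difficulty.

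Second, for $AB=BA$ I would use that $B$ is the matrix of multiplication by $v$ in the asymmetric basis. The coefficients (\ref{eq:bnm}) arise precisely from the three-term recurrence
\begin{equation*}
v\,\psi_m(v)=T^{\frac12}\Bigl(\sqrt{\tfrac{m+1}2}\,\psi_{m+1}(v)+\sqrt{\tfrac m2}\,\psi_{m-1}(v)\Bigr),
\end{equation*}
which is the statement that $\Pi(BU)=v\,\Pi U$ for every $U\in X$ (note $BU\in X$ since $B$ raises the support by at most one index). Then, for $U,V\in X$ with $f=\Pi U$, $g=\Pi V$, I would compute, using (\ref{eq:b26}),
\begin{equation*}
\langle ABU,V\rangle=\int_{\mathbb R}\Pi(BU)(v)\,g(v)\,dv=\int_{\mathbb R}\bigl(vf(v)\bigr)g(v)\,dv=\int_{\mathbb R}f(v)\bigl(vg(v)\bigr)\,dv=\langle AU,BV\rangle .
\end{equation*}
The middle equality is the self-adjointness of multiplication by $v$, which here is a triviality requiring neither integration by parts nor a boundary term, in contrast with Lemma \ref{lem:3.2}; all integrands are absolutely integrable since $f$ and $g$ are finite combinations of the Gaussian-decaying functions $\psi_m$. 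Invoking the already proven $B=B^T$ to move $B$ across the Euclidean pairing, $\langle AU,BV\rangle=\langle B^TAU,V\rangle=\langle BAU,V\rangle$, so that $\langle ABU,V\rangle=\langle BAU,V\rangle$ for all $U,V\in X$. Taking $U$ and $V$ among the canonical basis vectors of $X$ gives equality of every entry of $AB$ and $BA$ (both well defined because $B$ is banded), whence $AB=BA$.

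The only genuinely delicate point is the justification that $B$ represents multiplication by $v$, i.e. the recurrence for $v\psi_m$; everything else is formal bookkeeping of the same kind as in Section \ref{sec:3}. Since that recurrence is exactly the content already built into the definition (\ref{eq:bnm}), I expect the Corollary to collapse to the short self-adjointness computation displayed above, so the ``hard part'' is effectively discharged as soon as the identity $\Pi(BU)=v\,\Pi U$ is recorded.
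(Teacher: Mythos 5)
Your proof is correct and takes essentially the same approach as the paper: the paper establishes $B^T=B$ by inspection of (\ref{eq:bnm}) and, for $AB=BA$, appeals to norm conservation with the remark that ``a detailed proof can be performed with the method of Lemma \ref{lem:3.2}'' --- which is precisely the operator-interpretation argument (identity $\Pi(BU)=v\,\Pi U$ plus self-adjointness of multiplication by $v$ on $X$) that you carry out. Your write-up simply supplies the three-term recurrence and the finite-support bookkeeping that the paper leaves implicit.
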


\begin{proof}
The first property is just the definition (\ref{eq:bnm}). 
The second is a corollary of the preservation of the  norm $ \left< U, AU \right>$ of Lemma \ref{lem:anorm}: a detailed proof can be performed with the method of Lemma \ref{lem:3.2}.
\end{proof}

The approximation of $B$ with a finite
number of moments yields the matrix $B^N:=B_{11}^N\in \mathbb R^{(N+1)\times (N+1)}$ where
$$
B=\left(
\begin{array}{cc}
B_{11}^N & B_{12}^N \\
B_{21}^N & B_{22}^N
\end{array}
 \right)
$$ 
with $B_{11}^N\in \mathbb R^{(N+1)\times (N+1)}$, $B_{12}^N =(B_{21}^N )^T \in \mathbb R^{\mathbb N \times (N+1)} $ and $B_{22}^N \in \mathbb R^{\mathbb N\times \mathbb N}$.

Since $B^N$ is symmetric by construction, then 
the solution of the equation $\partial_t U^N + B^N \partial_x U^N=0$ preserves the quadratic norm $ \frac d{dt}\int_x  \| U^N\|^2=0$.
However one important  problem remains, which is the fact that 
the equation $\partial_t f + v \partial_x f=0$   is usually just one stage in a splitting algorithm where many basic equations are discretized.
The other equation can be for instance the model equation $\partial_t f + e \partial_v f=0$ (actually this block is always present
in all physical models we are interested in).
That is why it is important to guarantee the stability with respect to a criterion which is common to all parts of the general method.
This criterion is the preservation of the norm $ \left< U, AU \right>$.

In our opinion, the only way to obtain such a general criterion is to modify $B^N$ as it was done for $D^N$  so that it becomes symmetric with respect to $A^N=A_{11}^N$.
The two methods  for the modification of the matrix $D^N$ are quite easy to generalize to the matrix $B^N$ so we provide only the main ideas.

A first modified matrix writes
\begin{equation} \label{eq:bmod1}
\overline B^N= B^N+ (A_{11}^N)^{-1} A_{12}^N B_{21}^N.
\end{equation}

\begin{lemma}
The modified matrix (\ref{eq:bmod1}) is symmetric with respect to the truncated Gram matrix $A^NB^N= B^N A^N$ 
and is computable explicitly. 
\end{lemma}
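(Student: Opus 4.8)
The plan is to mirror the argument used for the first modification of $D^N$ in Section \ref{sec:5.1}, exploiting the fact that $B$ enjoys a \emph{symmetry} (rather than a skew-symmetry) with respect to the Gram matrix. I would first fix the meaning of ``symmetric with respect to the truncated Gram matrix'' as the self-adjointness relation $A^N\overline B^N=(\overline B^N)^TA^N$, i.e.\ as the statement that the matrix $A^N\overline B^N$ is symmetric. This is the exact analogue of the skew-symmetry $A^N\overline D^N+(\overline D^N)^TA^N=0$ from the $D$-case, and it is precisely what is needed to guarantee the conservation of $\int_x\langle U,A^NU\rangle$ for the modified scheme $\partial_tU^N+\overline B^N\partial_xU^N=0$.

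For the symmetry claim, I would multiply (\ref{eq:bmod1}) on the left by $A^N=A_{11}^N$ and cancel the inverse, obtaining
\[
A^N\overline B^N = A_{11}^N B_{11}^N + A_{12}^N B_{21}^N .
\]
The key observation is that the right-hand side is exactly the top-left $(N+1)\times(N+1)$ block of the infinite product $AB$. By the Corollary following Lemma \ref{lem:anorm} one has $AB=BA$, and since both $A$ and $B$ are symmetric, $(AB)^T=B^TA^T=BA=AB$, so the infinite matrix $AB$ is symmetric. A symmetric matrix has symmetric diagonal blocks, hence $A_{11}^NB_{11}^N+A_{12}^NB_{21}^N$ is symmetric, and using $(A^N)^T=A^N$ this reads $A^N\overline B^N=(\overline B^N)^TA^N$, which is the claimed symmetry. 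This is the $B$-counterpart of the step after (\ref{eq:rui1}) where the first line of (\ref{eq:fl}) was invoked; here the required block identity is simply the $(1,1)$ entry of $AB=BA$.

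For the explicit computability I would read off from (\ref{eq:bnm}) the structure of $B_{21}^N$. Although $B$ is tridiagonal, the only coefficient $b_{nm}$ with $n\ge N+1$ and $0\le m\le N$ comes from the subdiagonal term $\delta_{m+1,n}$ with $m=N$, $n=N+1$; thus $B_{21}^N$ has a single nonzero entry $b_{N+1,N}=T^{1/2}\sqrt{(N+1)/2}$ at its top-right corner, exactly the structure (\ref{eq:d12n}) of $D_{21}^N$. Consequently $A_{12}^NB_{21}^N$ vanishes except for its last column, which is $T^{1/2}\sqrt{(N+1)/2}$ times the first column $g^N$ of $A_{12}^N$. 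The correction $(A_{11}^N)^{-1}A_{12}^NB_{21}^N$ therefore reduces to solving the single system $A_{11}^Nz^N=g^N$, whose solution is given in closed form by Propositions \ref{prop:5.3} and \ref{prop:5.4}; this makes $\overline B^N$ explicitly computable and shows, as for $\overline D^N$ in Theorem \ref{prop:propi}, that it differs from $B^N$ only through one column.

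The main obstacle, exactly as in the $D$-case, is purely book-keeping about infinite matrices: one must justify that the block product formula $(AB)_{11}=A_{11}^NB_{11}^N+A_{12}^NB_{21}^N$ holds and that the symmetry of $AB$ transfers to its finite diagonal block. Since the paper works formally with vectors of compact support (the space $X$ of (\ref{eq:xxx})), every sum involved is finite and these manipulations are legitimate; the only genuinely new verification over the $D$-case is that the tridiagonal $B_{21}^N$ still has a single nonzero entry, after which the explicit vector $z^N$ is inherited verbatim from Section \ref{sec:5.1}.
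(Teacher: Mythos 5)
Your proposal is correct and follows essentially the same route as the paper: you identify $A^N\overline B^N=A_{11}^NB_{11}^N+A_{12}^NB_{21}^N$ as the top-left block of the symmetric infinite matrix $AB$ (the paper justifies this a bit more tersely via the symmetry of $B$ and the commutation $AB=BA$), and you obtain explicit computability from the single nonzero top-right entry of $B_{21}^N$, which is exactly the paper's observation $B_{21}^N=-\frac{T}{2e}D_{21}^N$ reducing everything to the vector $z^N$ of Propositions \ref{prop:5.3} and \ref{prop:5.4}. Your spelled-out clarification that the intended notion is the self-adjointness $A^N\overline B^N=(\overline B^N)^TA^N$ (rather than literal commutation of the truncated matrices) is a welcome precision but not a different argument.
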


\begin{proof}
The first property is because 
$
A_{11}^N \overline B^N= A_{11}^N B_{11}^N+  A_{12}^N B_{21}^N
$ is a symmetric matrix since $B$ is symmetric.
The second property is because $B_{21}^N$ is proportional to $D_{21}^N$. 
More precisely one has the relation   $B_{21}^N= -\frac{T}{2e}D_{21}^N$ which is deduced from (\ref{eq:d12n}) and (\ref{eq:bnm}).
So one obtains
$$
{\overline B}^N 
= B_{11}^N +  \sqrt{\frac{T(N+1)}{2}}
Y^N
$$ 
where the matrix $Y^N$ is explicitly given in Proposition \ref{prop:propi}.
\end{proof}

Another  possibility 
is to use to penalization technique of Section \ref{sec:penal}. That is we define
the second modified matrix  (still with the notation $A^N=A_{11}^N$)
\begin{equation} \label{eq:muki2}
\begin{array}{lll}
\overline B^N&=& B^N -\frac12 (A^N+\varepsilon I^N)^{-1} \left( (A^N+\varepsilon I^N)  B^N - B ^N (A^N+\varepsilon I^N) \right) \\
& = & \frac12  B^N + \frac12 (A^N+\varepsilon I^N)^{-1} B^N (A^N+\varepsilon I^N).
\end{array}
\end{equation}

\begin{lemma}
Solutions to $\partial_t U^N + \overline B^N \partial_x U^N=0$ with the first (resp. second)  modified matrix preserves the  norm $
   \left<U^N, A^N U^N \right>  $
 (resp. $ \left<U^N, A^N U^N \right> +\varepsilon \| U^N \|$).
\end{lemma}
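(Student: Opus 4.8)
The plan is to reproduce the energy estimate of Lemma~\ref{lem:maki}, keeping in mind that since (\ref{eq:muki1}) is a transport in the spatial variable~$x$, the conserved quantity is the spatial integral $\int_x \langle U^N, A^N U^N\rangle\,dx$ in the spirit of Lemma~\ref{lem:anorm}, and not a pointwise expression. Conservation will come from symmetry of the relevant matrix product followed by an integration by parts in~$x$, rather than from the pure cancellation used in the velocity case of Lemma~\ref{lem:maki}.

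First I would treat the first modified matrix~(\ref{eq:bmod1}). Differentiating in time and using $A^N=(A^N)^T$, then substituting $\partial_t U^N=-\overline B^N\partial_x U^N$ and transposing, one obtains
\[
\frac{d}{dt}\int_x \langle U^N, A^N U^N\rangle\,dx
= -2\int_x \langle (\overline B^N)^T A^N U^N,\, \partial_x U^N\rangle\,dx.
\]
The preceding lemma already shows that $A^N\overline B^N$ is symmetric, i.e. $(\overline B^N)^T A^N=A^N\overline B^N=:M$ with $M=M^T$. Since $M$ is constant in $x$, the integrand is a total derivative,
\[
\langle M U^N, \partial_x U^N\rangle=\tfrac12\,\partial_x\langle U^N, M U^N\rangle,
\]
so the integral vanishes under periodic or decaying boundary conditions in $x$, giving $\frac{d}{dt}\int_x \langle U^N, A^N U^N\rangle\,dx=0$.

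The second modified matrix~(\ref{eq:muki2}) is handled identically after replacing $A^N$ by $\widetilde A:=A^N+\varepsilon I^N$, whose associated quadratic form is precisely $\langle U^N, A^N U^N\rangle+\varepsilon\|U^N\|^2$. The only new point is to check that $\widetilde A\,\overline B^N$ is symmetric; from (\ref{eq:muki2}) one computes directly $\widetilde A\,\overline B^N=\tfrac12(\widetilde A B^N+B^N\widetilde A)$, which is symmetric because $B^N$ and $\widetilde A$ both are. The same transposition and integration by parts then yield conservation of $\int_x\big(\langle U^N, A^N U^N\rangle+\varepsilon\|U^N\|^2\big)\,dx$. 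I do not anticipate a genuine obstacle here: both symmetry statements are either already proved (first method) or a one-line algebraic verification (second method), and the remaining work is the routine integration by parts. The only conceptual difference with Lemma~\ref{lem:maki} is that here symmetry (rather than skew-symmetry) of the product with the Gram matrix produces a perfect spatial divergence, which is exactly what a transport equation in $x$ requires.
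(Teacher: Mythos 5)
Your proof is correct and supplies exactly the argument the paper leaves implicit (its own proof reads only ``Evident.''): the symmetry of $M=A^N\overline B^N=(\overline B^N)^TA^N$ established in the preceding lemma (resp. the one-line check that $(A^N+\varepsilon I^N)\overline B^N=\tfrac12\bigl((A^N+\varepsilon I^N)B^N+B^N(A^N+\varepsilon I^N)\bigr)$ is symmetric) turns the integrand into the perfect derivative $\tfrac12\,\partial_x\langle U^N,MU^N\rangle$, which integrates to zero under periodic or decaying boundary conditions. You also correctly interpret the conserved quantity as the spatial integral $\int_x\langle U^N,A^NU^N\rangle\,dx$, consistent with Lemma \ref{lem:anorm}, and with $\varepsilon\|U^N\|^2$ (as in Lemma \ref{lem:5.8}) rather than the statement's typographical $\varepsilon\|U^N\|$.
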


\begin{proof}
Evident.
\end{proof}

\section{Numerical illustrations} \label{sec:7}

We have implemented a specialized research code\footnote{Repository: \url{https://gitlab.lpma.math.upmc.fr/asym}} in Python
to evaluate the new moments methods. 
We discretize space with a finite difference (FD) method and time with a Crank-Nicolson scheme.
Solving each time step involves solving a set of linear equations using the Krylov method, specifically the GMRES\cite{youcef} method, with the initial guess derived from the solution at the preceding time step.

\subsection{Transport equation}

In this Section, we recalculate the test of Figure \ref{fig:1} with $N=64$, $\Delta t=0.1$ and $T=2$
where the electric field is reversed at $t=4.5$.
We use the stabilized method explained in Section \ref{sec:5}.
In Figure \ref{fig:2} we plot the results where the matrix $\overline D^N$ is obtained with the method of Section \ref{sec:5.1}.
\begin{figure}[h!]
\centering
\begin{tabular}{cc}
 \includegraphics[scale = 0.275]{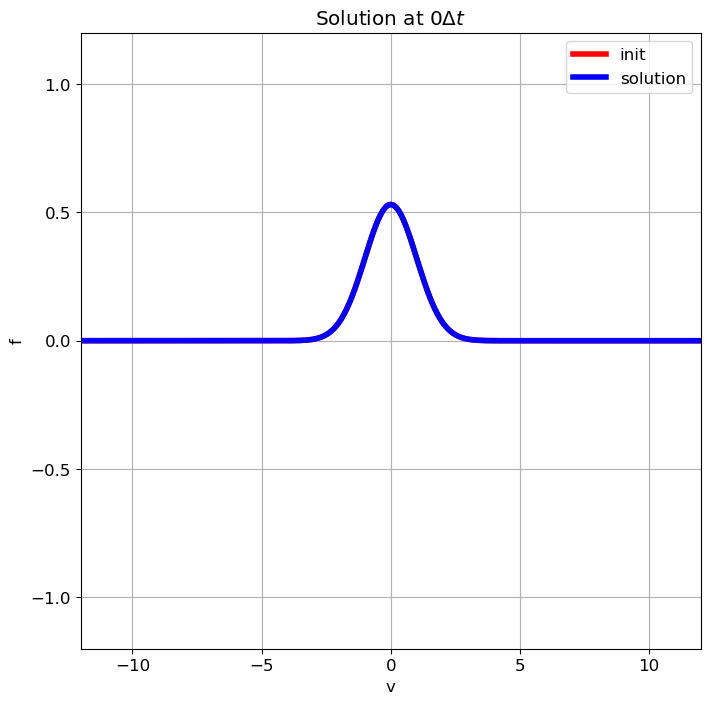}
&\includegraphics[scale = 0.275]{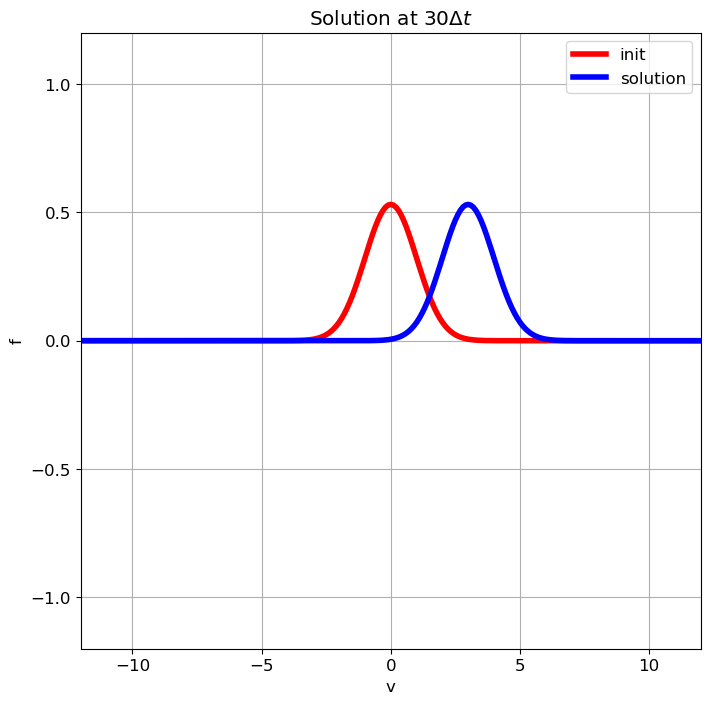} \\
 \includegraphics[scale = 0.275]{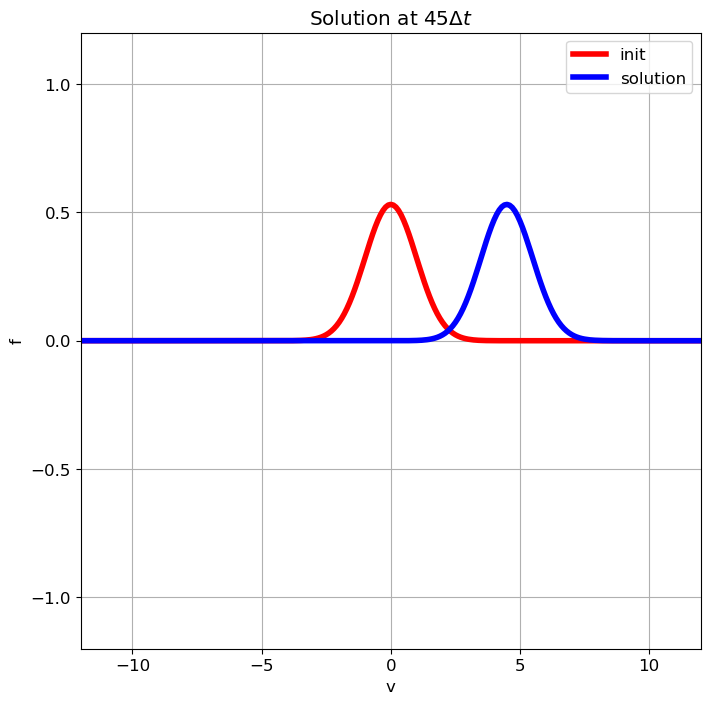}
&\includegraphics[scale = 0.275]{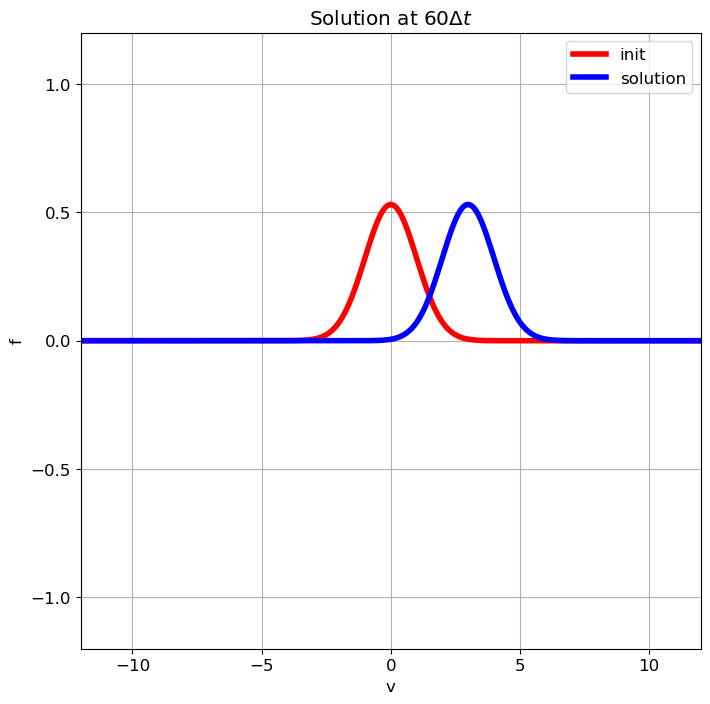} \\
 \includegraphics[scale = 0.275]{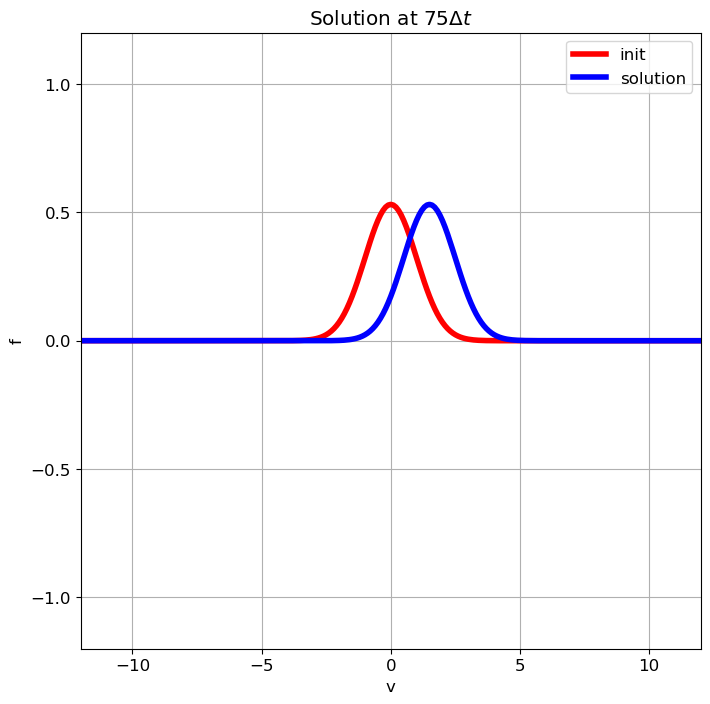}
&\includegraphics[scale = 0.275]{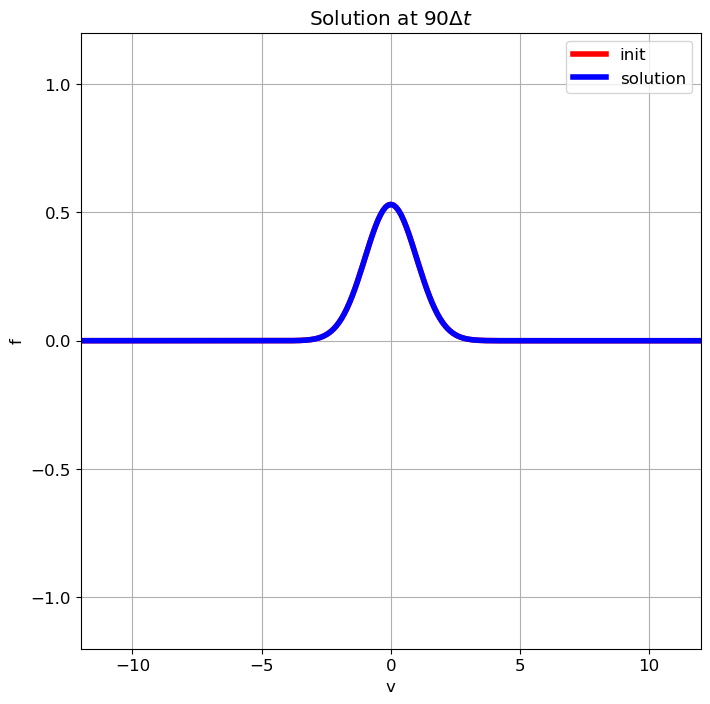}
 \end{tabular}
 \caption{Results of the advection test computed the scheme (\ref{eq:b8}) at time $t_0=0\Delta t$ to $t_5=90\Delta t$ ($N=64$ and $\Delta t=0.1$) with the first method.}
 \label{fig:2}
\end{figure}
\begin{figure}[h!]
\centering
\begin{tabular}{cc}
 \includegraphics[scale = 0.275]{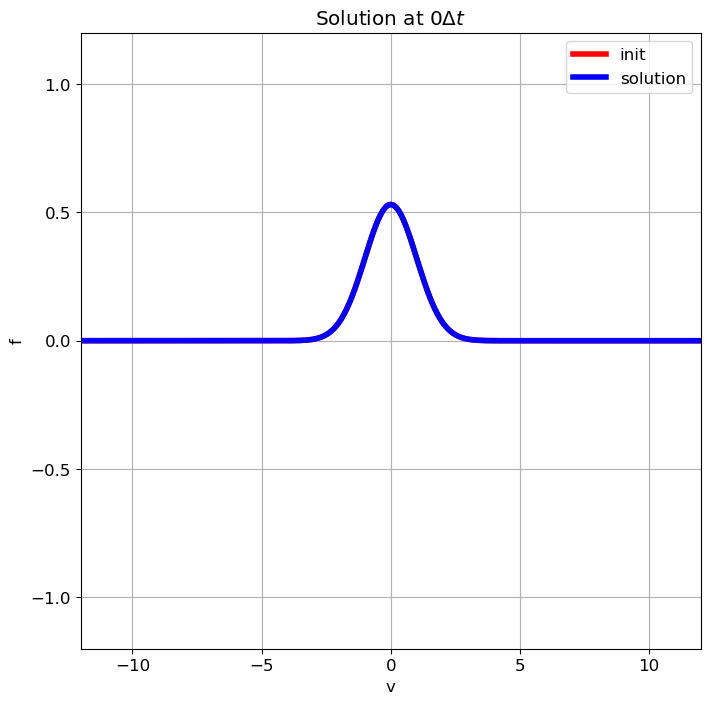}
&\includegraphics[scale = 0.275]{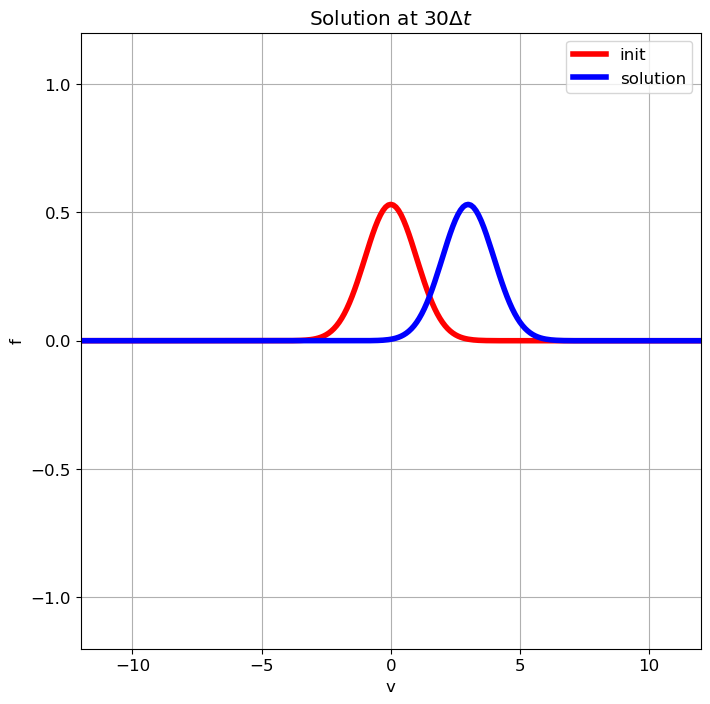} \\
 \includegraphics[scale = 0.275]{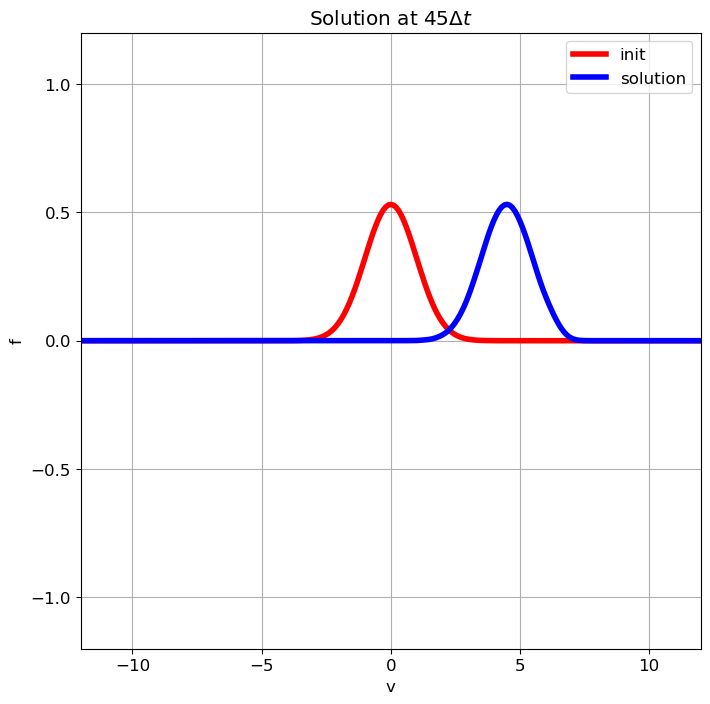}
&\includegraphics[scale = 0.275]{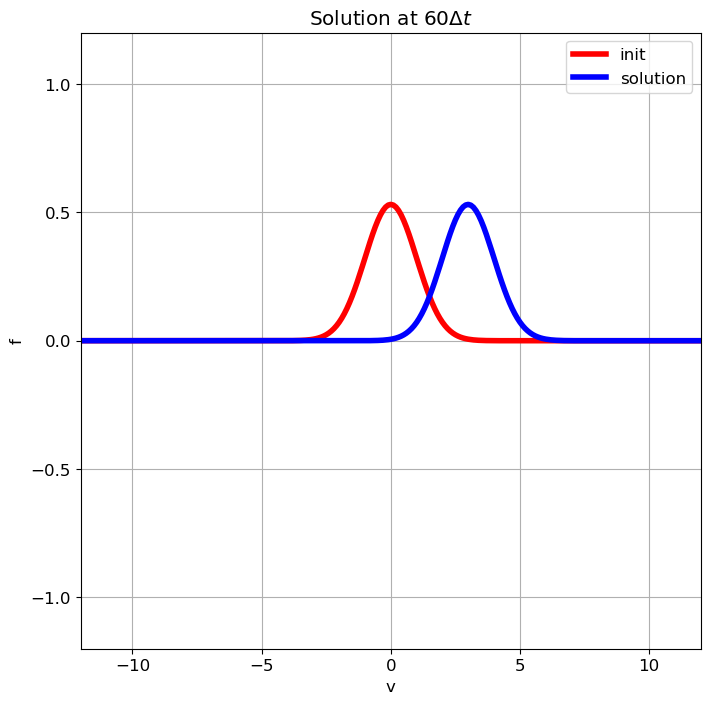} \\
 \includegraphics[scale = 0.275]{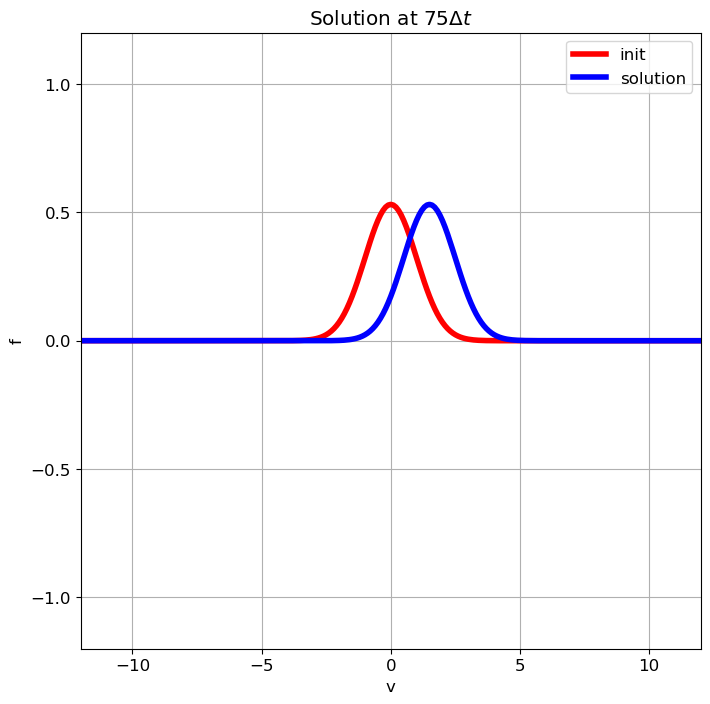}
&\includegraphics[scale = 0.275]{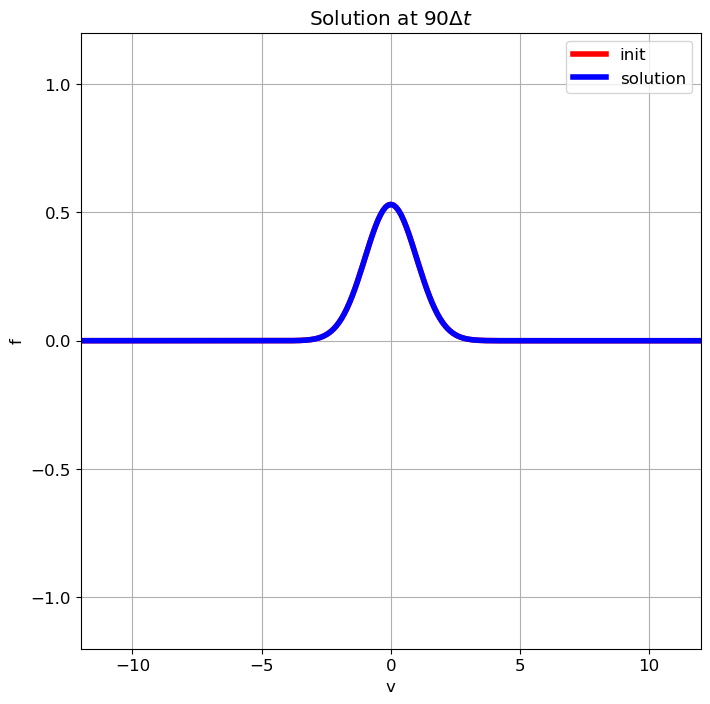}
 \end{tabular}
 \caption{Results of the advection test computed the scheme (\ref{eq:b8}) at time $t_0=0\Delta t$ to $t_5=90\Delta t$ ($N=64$ and $\Delta t=0.1$) with the second method (with $\varepsilon=10^{-10}$).}
 \label{fig:3}
\end{figure}
In Figure \ref{fig:3}  we plot the results where the matrix $\overline D^N$ is obtained with the method of Section \ref{sec:penal}.
The norm $  \left<U^N, A^N U^N \right> + \varepsilon \| U^N\|^2$ is rigorously constant one  time step after the other, as stated in Lemme \ref{lem:5.8}.
In both  cases, we observe excellent stability and accuracy.


It is of course possible to use the method with a constant electric field without reversing the sign. The results are  perfectly stable but the accuracy is more difficult to measure
because of the reasons explained in Remark \ref{rem:2.1}.


\subsection{Diocotron instability}

As outlined in Section \ref{sec:6}, 
our method can be employed for solving 
the Vlasov equation \eqref{eq:vlasov-poisson} coupled with the Poisson equation. 
The diocotron instability, observable in  magnetized low-density nonneutral plasmas with velocity shear, 
generates electron vortices akin to the Kelvin-Helmholtz fluidic shear instability. 
It arises when charge neutrality is disrupted, 
observed in scenarios like non-neutral electron beams and layers. 
The magnetic field's strength induces electron motion dominated by advection within the self-consistent $\mathbf{E} \times \mathbf{B}$ velocity field. 
The initial non-monotonic electron density profile creates an unstable $\mathbf{E} \times \mathbf{B}$ shear flow, 
resembling Kelvin-Helmholtz shear layer instability in fluid dynamics and the diocotron instability in beam and plasma physics. 
As this instability progresses nonlinearly, the initially axisymmetric electron density distribution distorts, 
resulting in discrete vortices and eventual breakup. 
This test case holds significance in both fundamental physics and practical applications, such as beam collimation.

The initial condition and the parameters are the same as those
in \cite{muralikrishnan},
with a uniform external magnetic field $\mathbf{B} = (0, 0, 5)$
applied along the $z$-axis within a domain of length $L = 22$.
Additionally, the external electric field is set to $0$ for this particular problem.
The initial condition is given by
$$
  f(t=0, r, v)
= \dfrac{C}{2\pi}
  \exp\left\{\frac{-|\mathbf{v}|^2}{2}\right\}
  \exp\left\{\frac{-(r-L/4)^2}{2(0.03L)^2}\right\}
,
$$
where $r = \sqrt{(x-L/2)^2 + (y-L/2)^2}$,
and the constant $C$ is selected to ensure that
the overall electron charge $Q_e$ equals $-400$.
The time integrator employs a time step of $\Delta t = 0.01$,
and the simulation is executed until the final time $T = 10.0$.

We consider three distinct cases. The first case is the original one without stabilization.
The second method is with the first method.
The third method is with the first method which implements the simplification explained in Remark \ref{rem:remi}, that is we nullify the first coefficients of the modified matrices.

We perform tests at $N=40$. For finite difference grid, we test a grid resolution of $64$.
We are interested in the way our new methods recover the quadratic stability. 
\begin{figure}[h!]
\centering
  \begin{tabular}{|ccc|}
    \hline
    & & \\
    \includegraphics[width=0.3\textwidth]{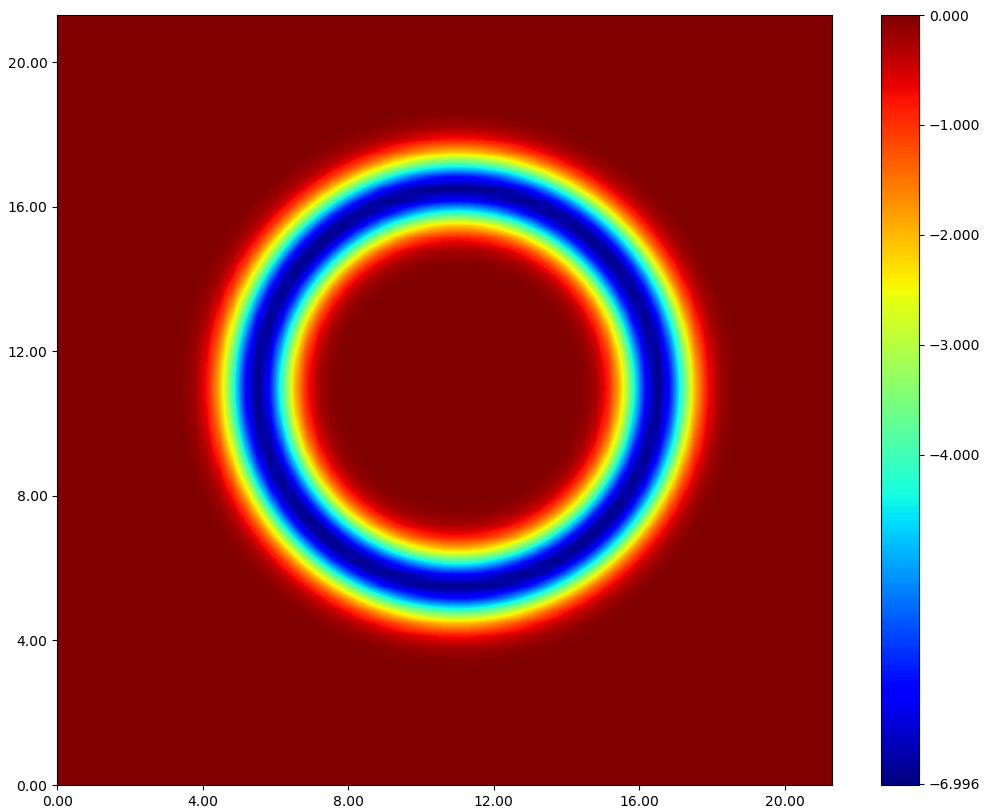}   &
    \includegraphics[width=0.3\textwidth]{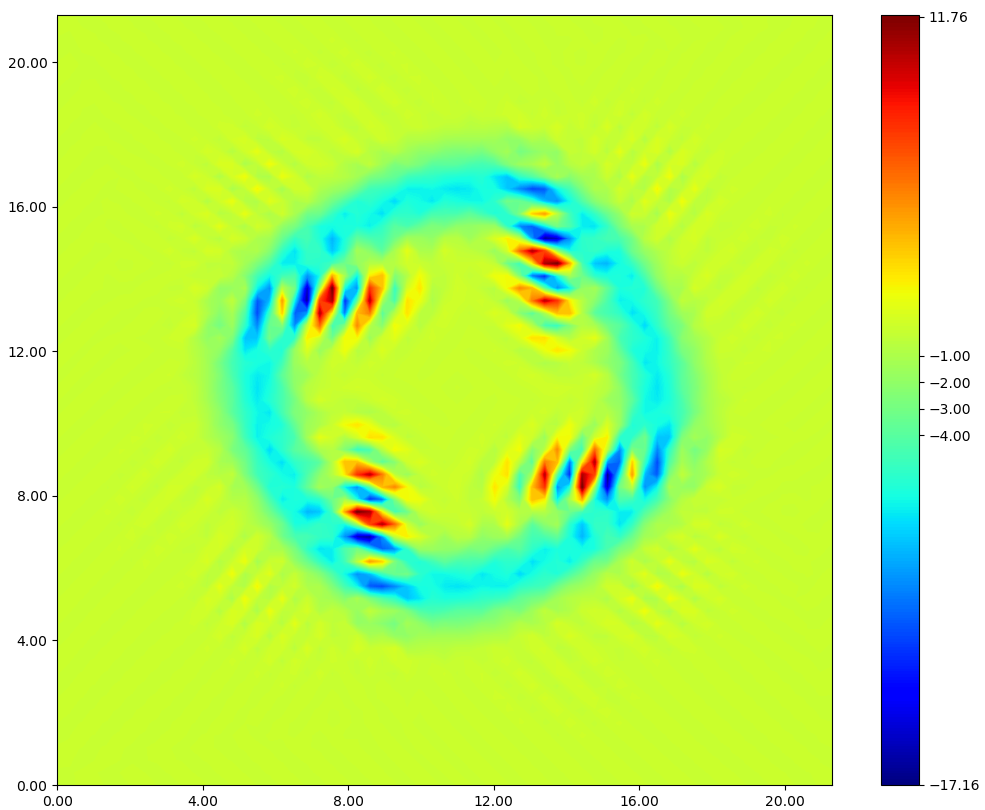} &
    \includegraphics[width=0.3\textwidth]{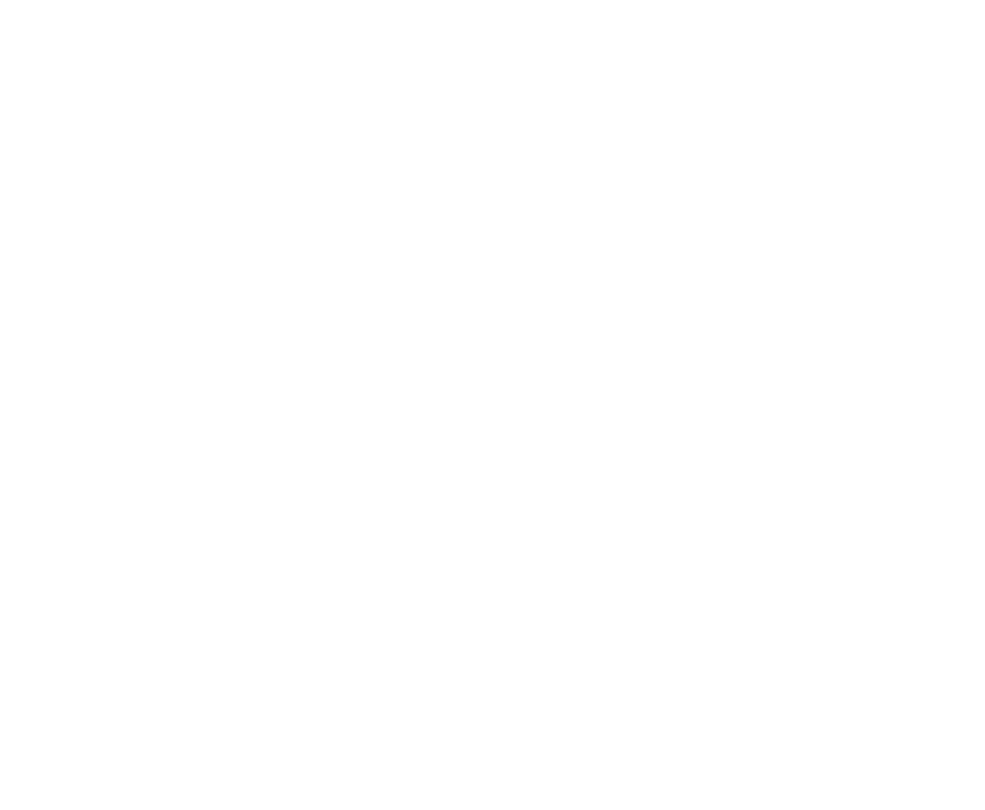}\\
    {$t=0$} & 
    {$t=3.8$} & 
    \\
    \hline
    & & \\
    \includegraphics[width=0.3\textwidth]{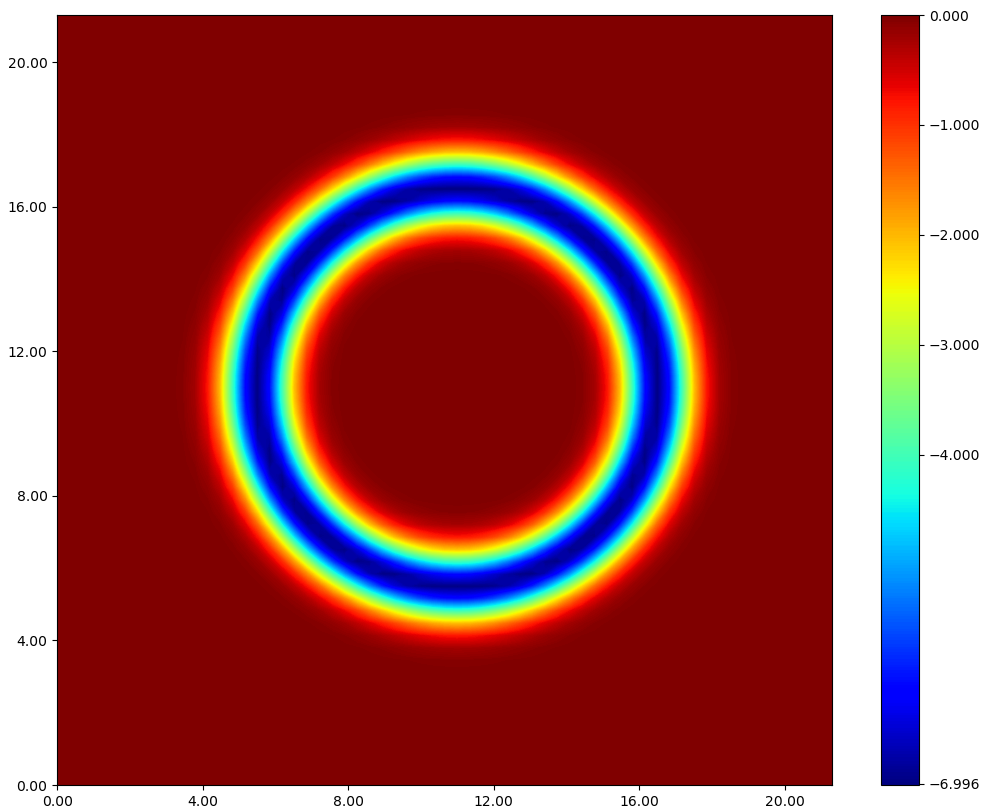}   &
    \includegraphics[width=0.3\textwidth]{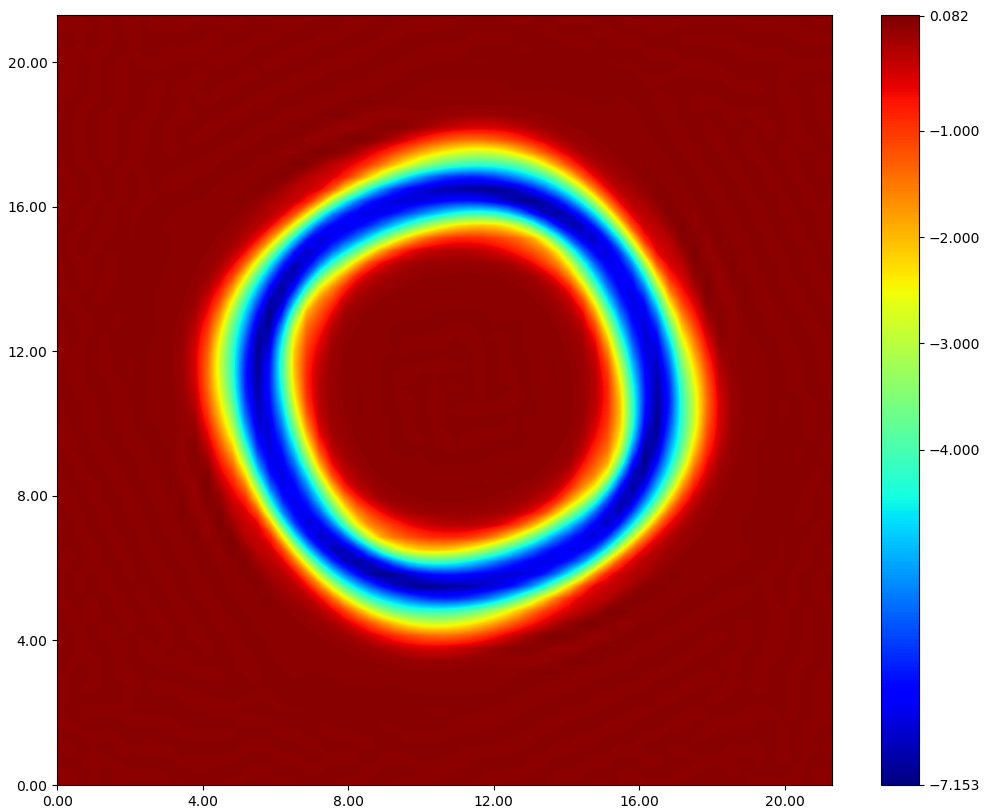} &
    \includegraphics[width=0.3\textwidth]{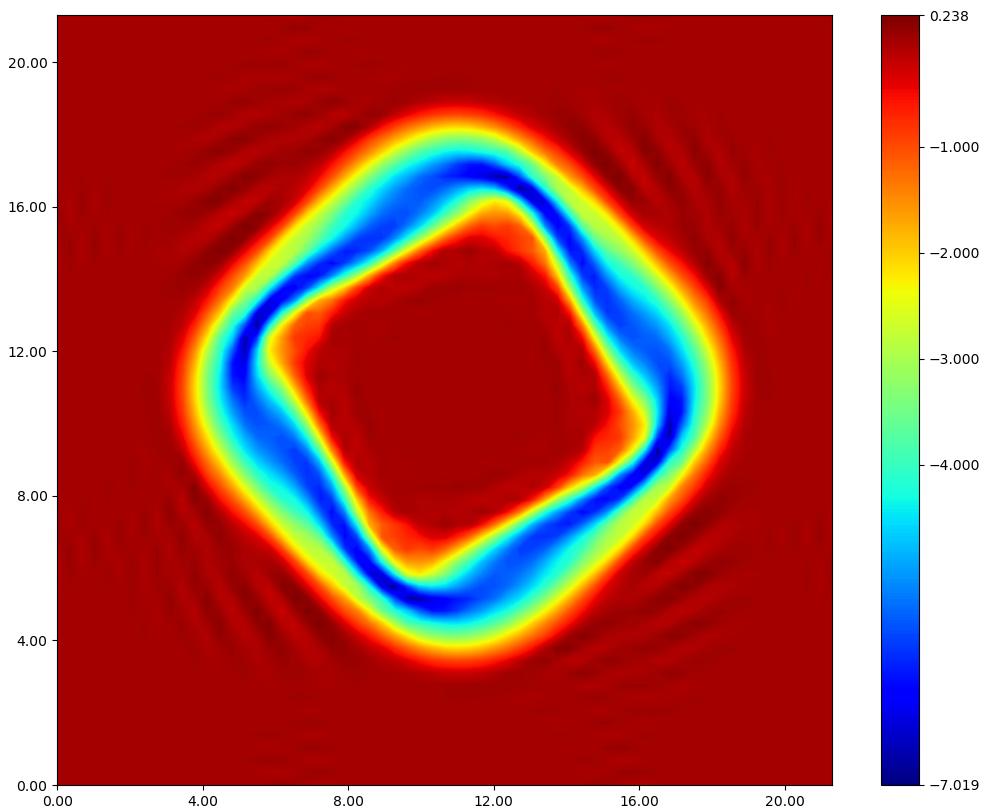}\\
    {$t=0$} & 
    {$t=3.8$} & 
    {$t=10$} \\
    \hline
    & & \\
    \includegraphics[width=0.3\textwidth]{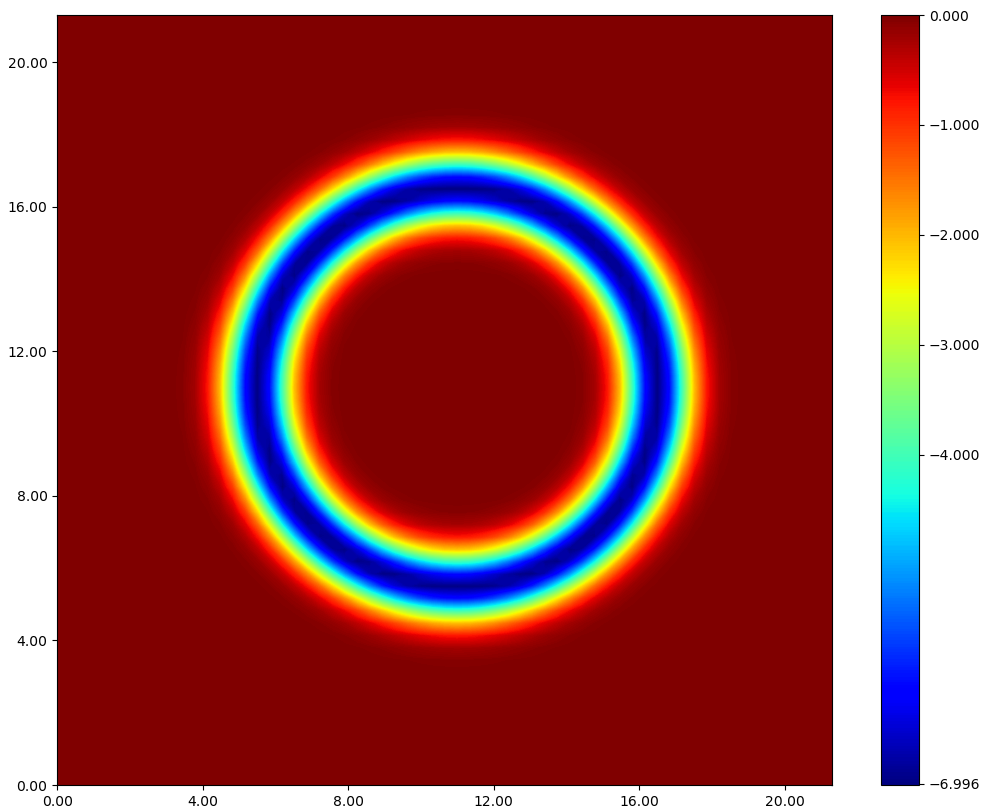}   &
    \includegraphics[width=0.3\textwidth]{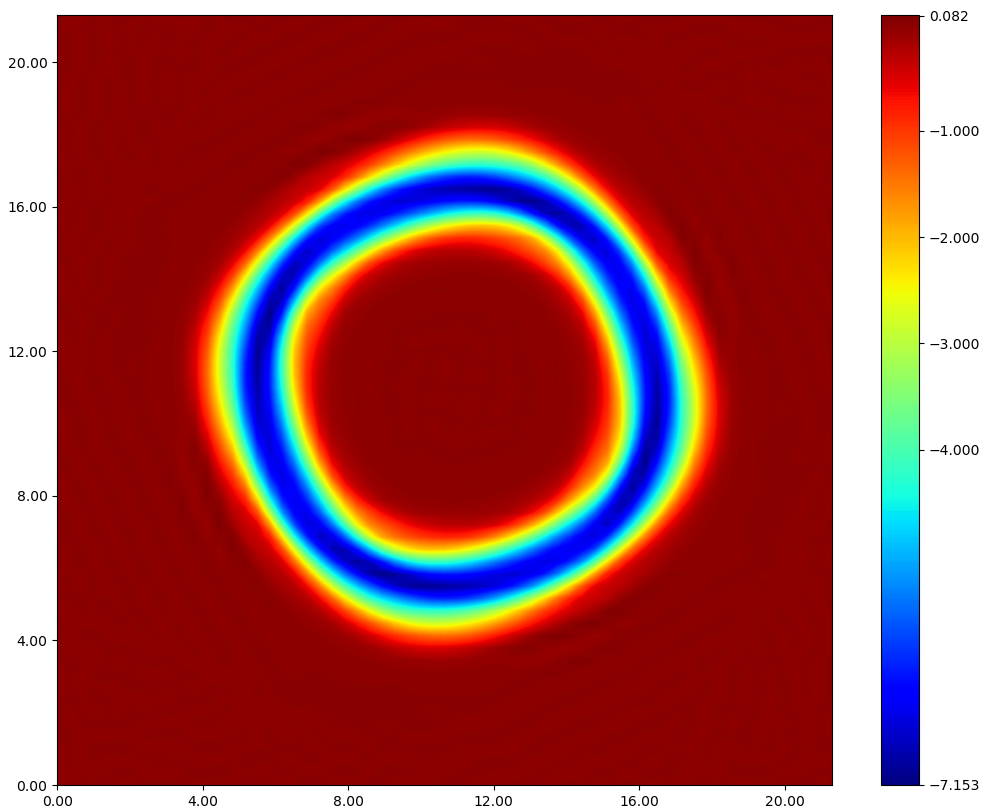} &
    \includegraphics[width=0.3\textwidth]{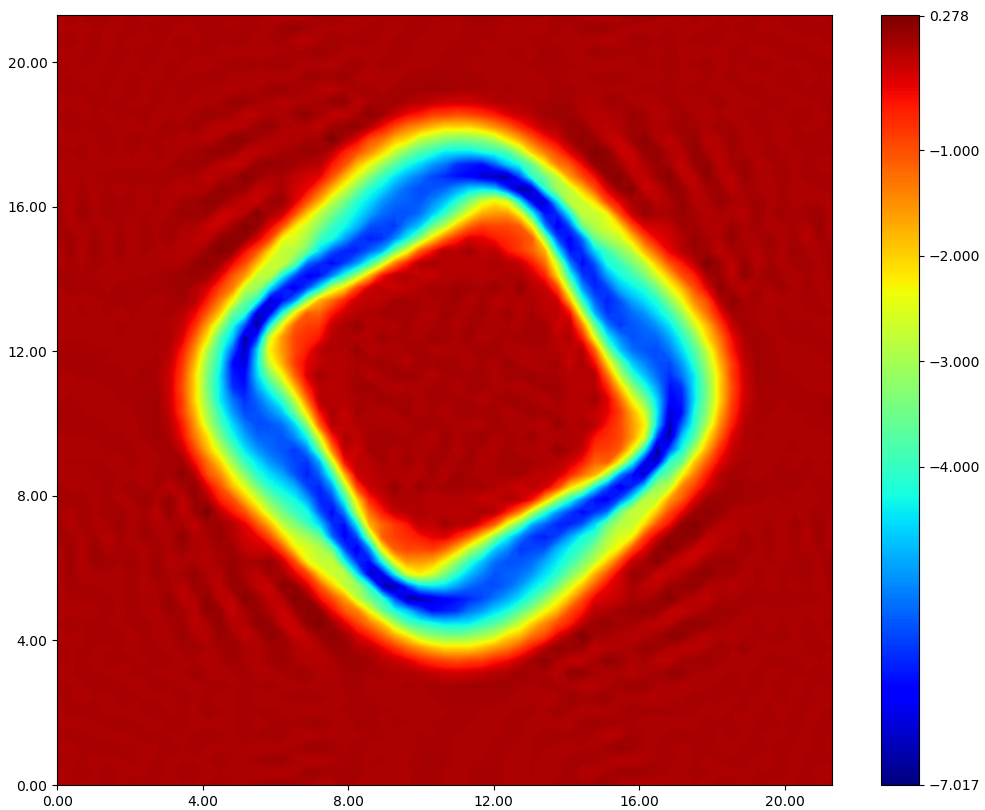}\\
    {$t=0$} & 
    {$t=3.8$} & 
    {$t=10$} \\
    \hline
  \end{tabular}
  \caption{2D diocotron instability: Evolution of electron charge density over time utilizing three distinct cases: the original method without stabilization (first row), the first method (second row), and the first method that implements the simplification explained in Remark \ref{rem:remi}, which is a numerically conservative method (third row). These visualisations are based on a $64^2$ mesh grid. Each figure's color bar displays the respective minimum and maximum values of the charge densities.}
\label{fig:diocotron}
\end{figure}

Fig. \ref{fig:diocotron} illustrates how the electron charge density changes over time,
for the unstabilized method (first row), the stabilized method (second row) and
the stabilized   method conservative in mass (third row).
From the first row one can observe the numerical instability ($t=3.80$),
and the blow-up.
From the second and third rows, one can observe that two methods are well stabilized  for all times.
The results in the third row closely align with the results presented in the second row, based on visual standards.
Conversely, none of the three methods enforces the positivity of the particle distribution function, resulting in minor oscillations around zero values.

\subsection{Two-stream instability}

We take the data of the two stream instability from \cite{flib:xong}.
The initial data is
$$
f_0(x,v)=\frac27 \left( 1+\cos kx + \alpha (\cos 2kx +\cos 3kx) /1.2 \right)(1+v^2) \frac1{\sqrt{2\pi }} e^{-v^2/2}
$$
with $\alpha=0.01$ and $k=0.5$. 
We perform tests at $N = 64$. For finite difference grid, we test a grid resolution of $32$.
And the time step is $\Delta t = 0.01$.
For this problem only two moments are non zero, which are 
\[
\left\{
\begin{aligned}
&u_0(x)= \frac{12}        7\left( 1+\cos kx + \alpha (\cos 2kx +\cos 3kx) /1.2 \right) \\ 
&u_2(x)= \frac{10\sqrt 2} 7\left( 1+\cos kx + \alpha (\cos 2kx +\cos 3kx) /1.2 \right), 
\end{aligned}
\right.
\]
and all other moments vanish.
\begin{figure}[h!]
\centering
  \begin{tabular}{|c|c|}
    \hline
    & \\
    \includegraphics[width=5.5cm]{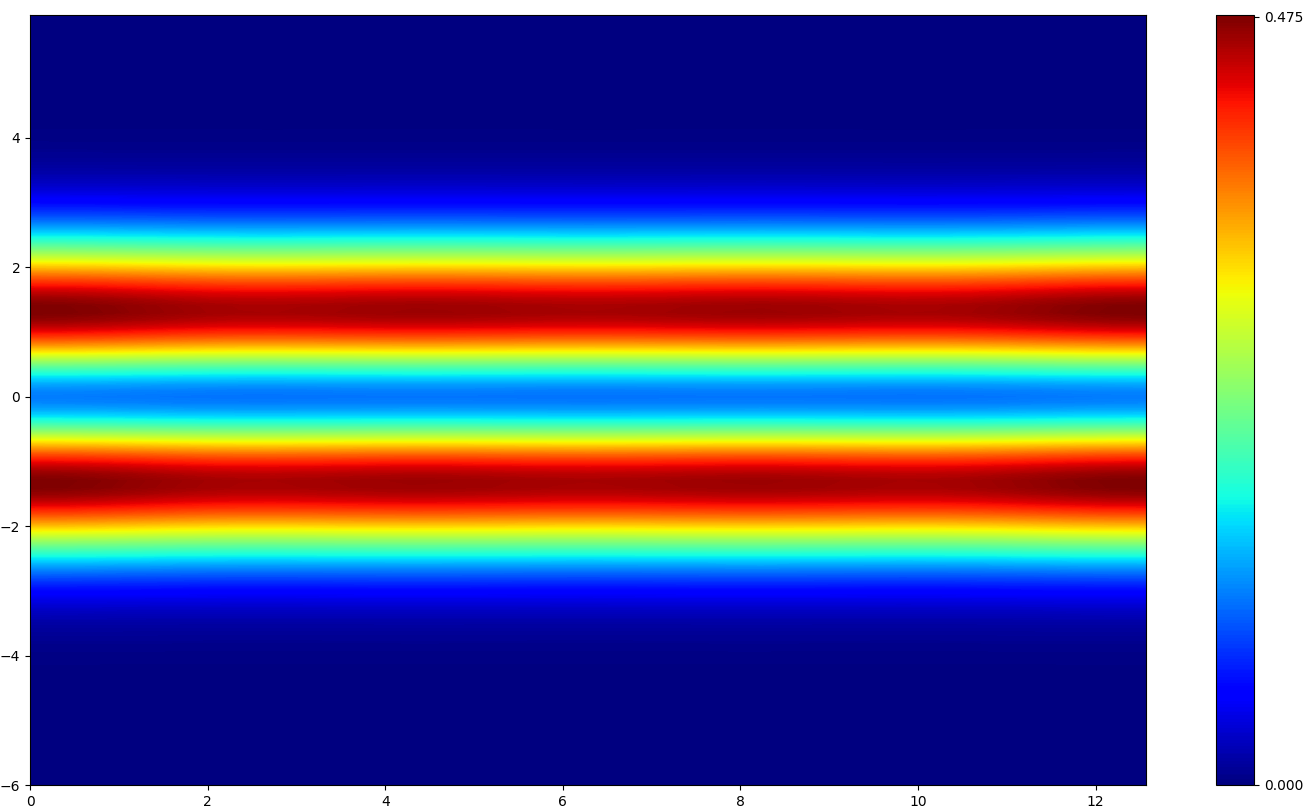} &
    \includegraphics[width=5.5cm]{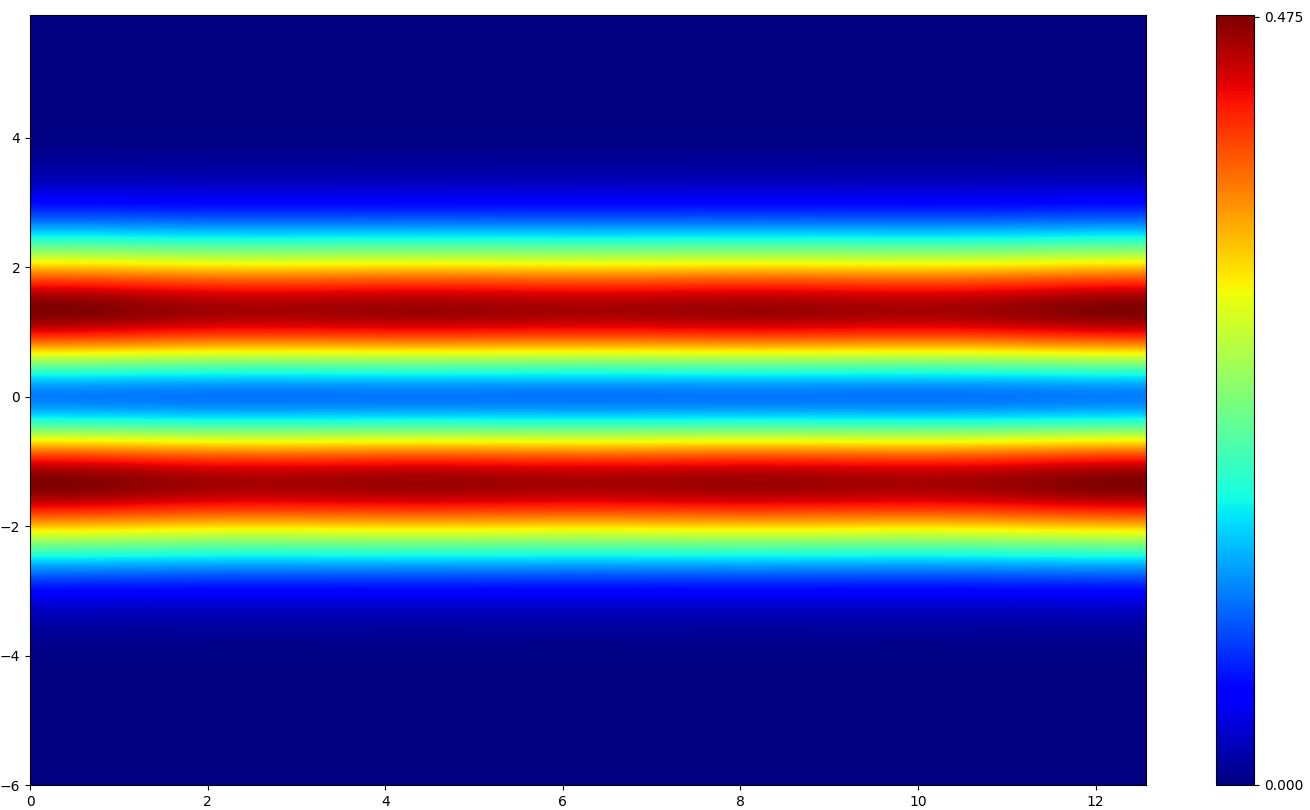} \\ 
    {$t=0$} & 
    {$t=0$} \\
    \includegraphics[width=5.5cm]{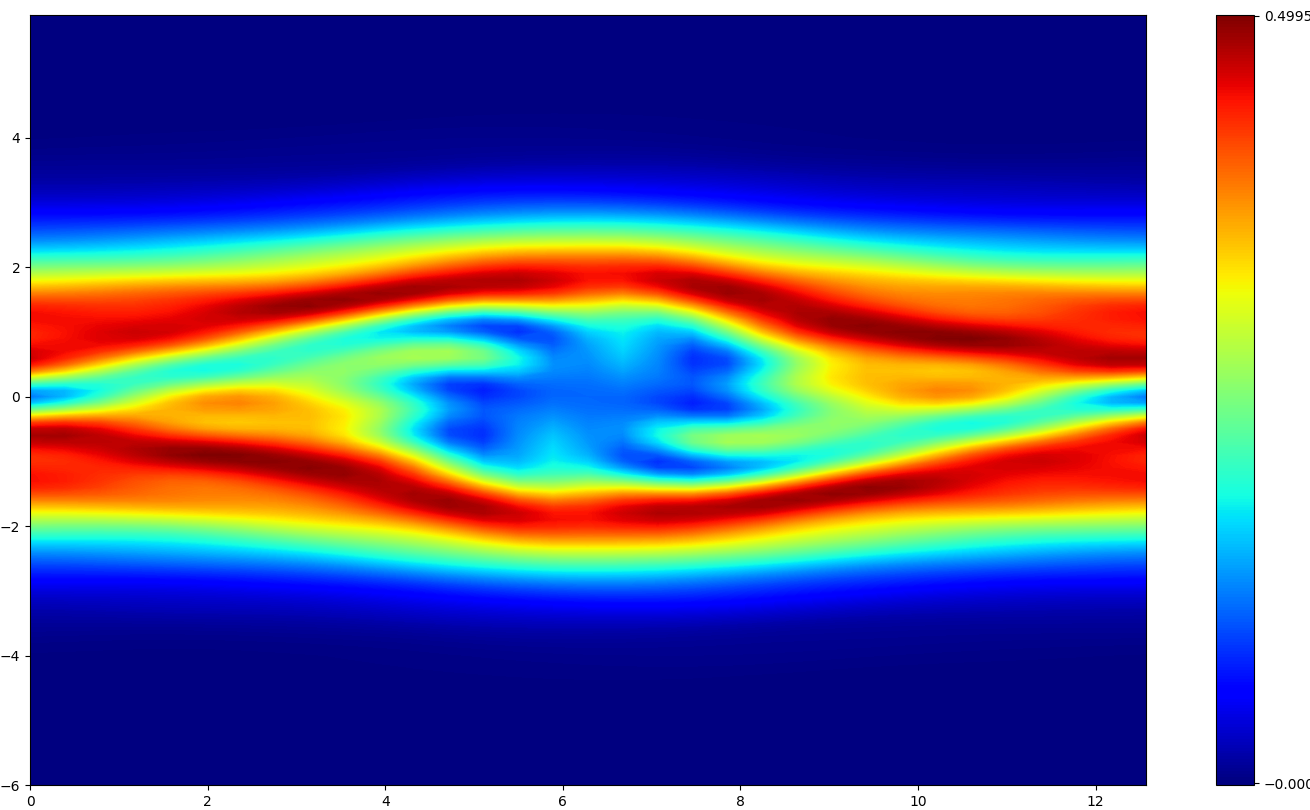} & 
    \includegraphics[width=5.5cm]{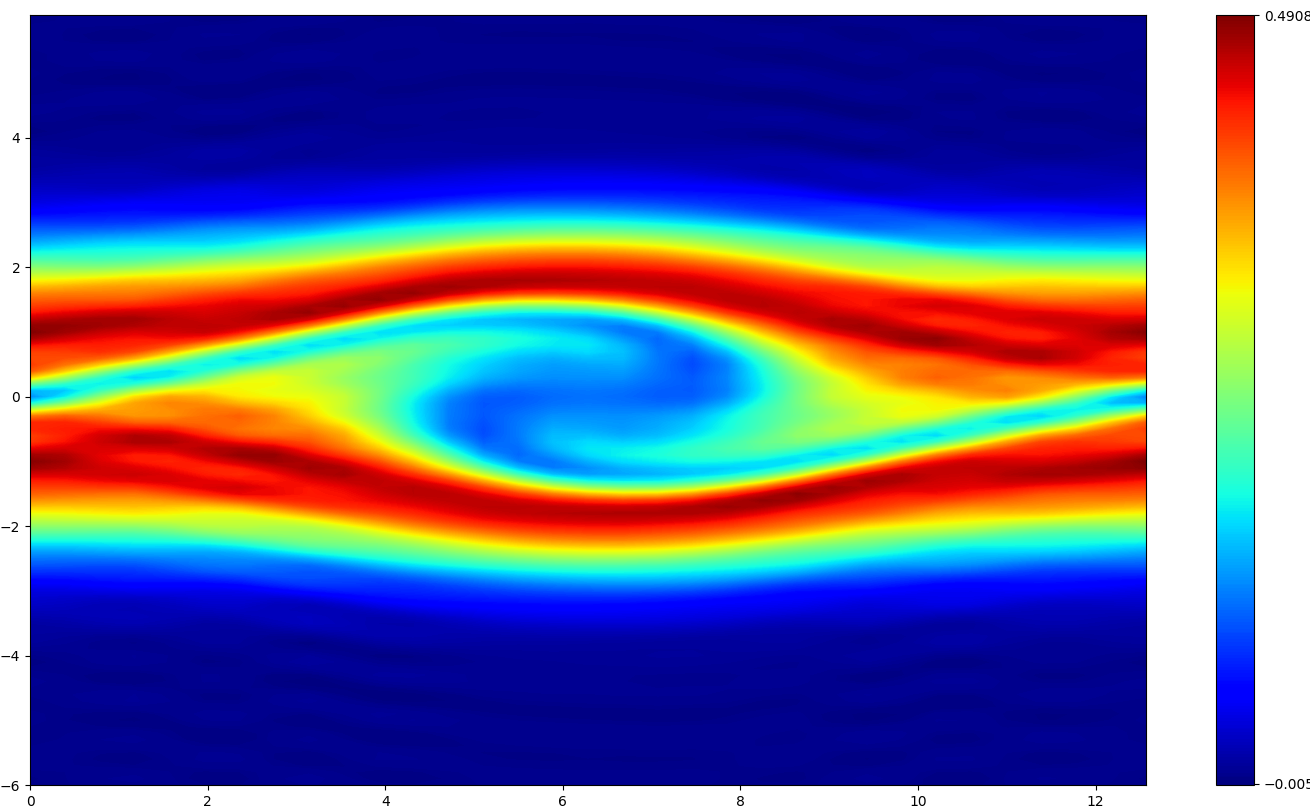} \\
    {$t=20$} & 
    {$t=20$} \\
    \hline
  \end{tabular}
  \caption{Density function at time $t=0$ and $t=20$. On the left the original method. On the right the stabilization with the first method.}
\label{fig:ts1}
\end{figure}

The results are shown in Figure \ref{fig:ts1} and \ref{fig:t2}.
The density function calculated at time $t=20$ is represented together with the history of the potential energy with respect to the time variable.
Up to a multiplicative constant, the potential energy is in accordance with the result from  \cite{flib:xong} in both cases.
In our opinion our numerical results  illustrate   that the stabilization of the asymmetric Hermite functions has a potential   for the  computation  of such non linear dynamics without any post-processing
or filtering of the numerical results. 
\begin{figure}[h!]
\centering
\includegraphics[width=\textwidth]{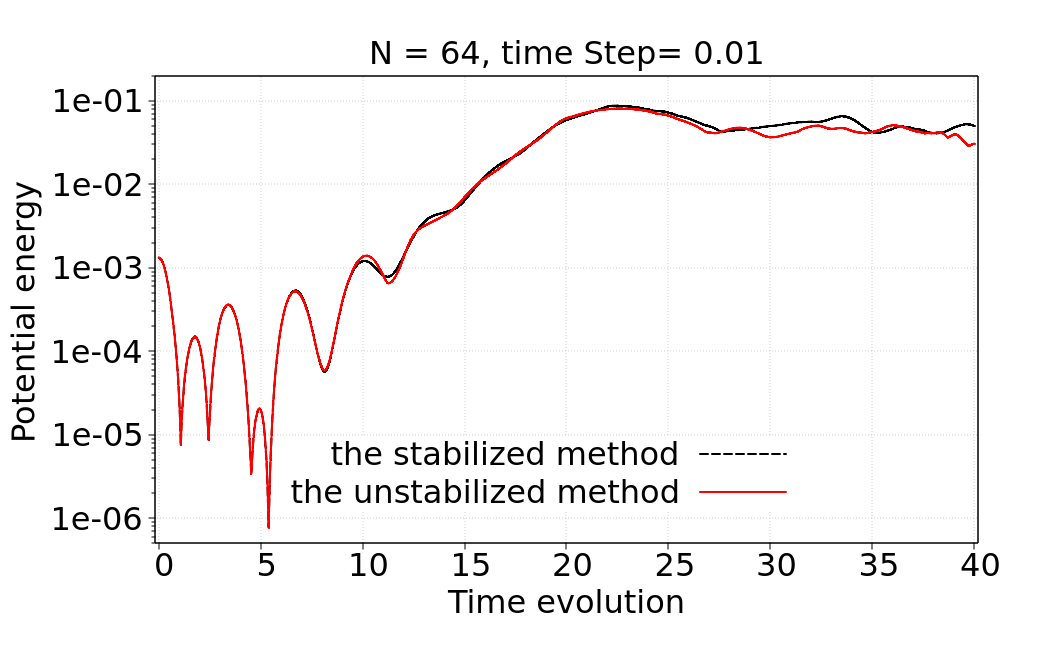} 
\caption{History of the potential energy with respect to the time variable.}
\label{fig:t2}
\end{figure}

{\bf Acknowledments.} The authors extend their heartfelt gratitude to Sever Hirstoaga and Fr\'ed\'erique Charles whose remarks were essential during the elaboration stage of this work.

{\bf Funding.} This study has been supported by ANR MUFFIN ANR-19-CE46-0004.

{\bf Data Availability.} The paper has no associated data

\section*{Declarations}

{\bf Conflict of interest.} The authors declare that they have no conflict of interest.

\appendix

\end{document}